\theoremstyle{definition}
\def\fnum{equation} 
\newtheorem{Thm}[\fnum]{Theorem}
\newtheorem{Cor}[\fnum]{Corollary}
\newtheorem{Lem}[\fnum]{Lemma}
\newtheorem{Con}[\fnum]{Conjecture}
\newtheorem{Pro}[\fnum]{Proposition}
\numberwithin{equation}{section}
\newcommand{\Vol}{{\text{Vol}}}
\newcommand{\nn}{{\bf{n}}}
\newcommand{\dist}{{\text {dist}}}
\newcommand{\Hess}{{\text {Hess}}}
\def\RR{{\bold R}}
\def\SS{{\bold S}}
\newcommand{\dv}{{\text {div}}}
\newcommand{\e}{{\text {e}}}
\newcommand{\Hx}{{\Hess^x \! \! }}
\newcommand{\cC}{{\mathcal{C}}}
\newcommand{\cA}{{\mathcal{A}}}
\newcommand{\cL}{{\mathcal{L}}}
\newcommand{\cM}{{\mathcal{M}}}
\newcommand{\cS}{{\mathcal{S}}}
\newcommand{\eqr}[1]{(\ref{#1})}
\title[Arnold-Thom gradient conjecture for the arrival time]{Arnold-Thom gradient conjecture for the arrival time}
\author[]{Tobias Holck Colding}%
\address{MIT, Dept. of Math.\\
77 Massachusetts Avenue, Cambridge, MA 02139-4307.}
\author[]{William P. Minicozzi II}%
\thanks{The  authors
were partially supported by NSF Grants DMS 1404540,  DMS 1206827 and DMS 1707270.}
\email{colding@math.mit.edu  and minicozz@math.mit.edu}
\begin{document}

\maketitle

\begin{abstract}

  We prove  conjectures of  Ren\'e Thom and Vladimir Arnold    for  $C^2$ solutions to the degenerate elliptic equation that is the level set equation for motion by mean curvature.  
  
  We believe these results are the first instances of a  general principle:  Solutions of many degenerate equations   behave as if they are analytic, even when they are not.  If so, this would explain various conjectured phenomena.  
\end{abstract}

\section{Introduction}

By a classical result, solutions of analytic elliptic PDEs, like the Laplace equation, are analytic.   Many important equations are degenerate elliptic and solutions have much 
lower regularity.   Still,
 one may hope that solutions share  properties of analytic functions.   On the surface, such properties seem to  be purely analytic; however, they turn  out to be 
  closely connected to important open problems in geometry.

 For an analytic function, Lojasiewicz, \cite{L1}, proved that any gradient flow line with a limit point   has finite length and, thus, limits to a unique critical point.    This result has since been known as {\emph{Lojasiewicz's theorem}}.  
The proof relied on two   {\emph{Lojasiewicz  inequalities}} for analytic functions that had also been used to prove two conjectures around 1960: Laurent Schwarz's division conjecture in 1959 in \cite{L3} and 
a conjecture of Whitney about singularities in 1963 in \cite{L4}. Around the same time, in 1958, 
   H\"ormander proved a special case of Schwarz's division conjecture by establishing Lojasiewicz's first inequality for polynomials, \cite{Ho}.

    \begin{figure*}[htbp]
\centering\includegraphics[totalheight=.18\textheight, width=.50\textwidth]{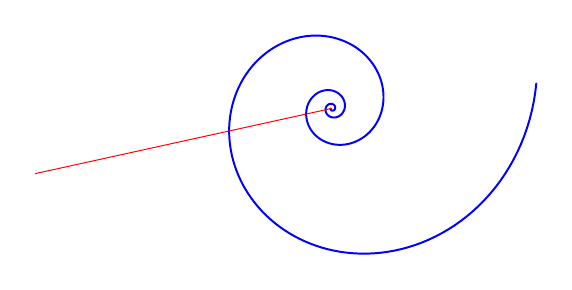}
\caption*{{\bf{Figure}} illustrates in $\RR^3$ a situation   conjectured to be impossible.      The Arnold-Thom conjecture asserts that a blue 
integral curve  does  not spiral  as it approaches the   critical set (illustrated in red,  orthogonal to the plane where the curve spirals). }   
  \end{figure*}

 Around 1972,   Thom, \cite{T}, \cite{L2}, \cite{Ku}, \cite{A}, \cite{G}, conjectured a strengthening of Lojasiewicz's theorem, asserting that  each gradient flow line of an analytic function approaches its limit from a unique limiting direction:
 
 \begin{Con}	\label{c:conj0}
 If a gradient flow line $x(t)$ for an analytic function  has a limit point, then 
  the limit of secants $\lim_{t \to \infty} \, \frac{x(t) - x_{\infty} } { |  x(t) - x_{\infty}|}$ exists. 
 \end{Con}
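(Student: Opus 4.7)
The plan is to use Lojasiewicz's gradient inequality together with a blow-up analysis at $x_\infty$, and to close the argument with an angular Lojasiewicz inequality that rules out spiraling. Since $f$ is analytic, the gradient inequality $|\nabla f(x)| \geq c\,|f(x) - f(x_\infty)|^\alpha$ holds near $x_\infty$ for some $\alpha \in [1/2, 1)$. Combined with $x'(t) = -\nabla f(x(t))$, this yields Lojasiewicz's theorem: the trajectory has finite length and converges to $x_\infty$. Writing $x(t) - x_\infty = r(t)\,\theta(t)$ with $\theta(t) \in S^{n-1}$, the conjecture is equivalent to the statement that $\theta(t)$ converges as $t \to \infty$.

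To identify the limiting direction I would perform a blow-up. Let $p$ be the order of vanishing of $f - f(x_\infty)$ and set $f_\lambda(y) = \lambda^{-p}(f(x_\infty + \lambda y) - f(x_\infty))$; as $\lambda \to 0$, $f_\lambda$ converges to its initial form, a nonzero homogeneous polynomial $P$ of degree $p$. With suitable time rescaling, the trajectory $x(t)$ gives rise to rescaled curves that subconverge to gradient flow lines of $P$ emanating from the origin. By homogeneity of $P$, such limiting flow lines are radial rays in directions $\theta_\infty$ that depend, a priori, on the chosen subsequence, so the task is reduced to showing that $\theta_\infty$ is independent of the subsequence.

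For that I would establish an angular Lojasiewicz inequality. The restriction of $P$ to $S^{n-1}$ is analytic, so it satisfies a Lojasiewicz gradient inequality on the sphere. Decomposing $\nabla f$ into radial and tangential components and comparing with the model $P$, this should yield an estimate of the form $r(t)\,|\theta'(t)| \leq C\,|r'(t)|\,r(t)^{\epsilon}$ for some $\epsilon > 0$. Integrating in time and using the finite-length bound would give $\int |\theta'(t)|\,dt < \infty$, so that $\theta(t)$ is Cauchy and convergence of the secants follows.

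The main obstacle is that $f$ is not homogeneous: the clean angular estimate only applies to the model $P$, and perturbation errors from lower-order terms in the Taylor expansion of $f$ must be kept from accumulating into infinite winding. The standard way to close this loop is an iterative blow-up, in which one peels off successive homogeneous summands in the Puiseux-like expansion of $f$ along the trajectory and reapplies the angular Lojasiewicz inequality to the residual; analyticity of $f$ guarantees only finitely many such iterations are needed. This finiteness — equivalently, the fact that the trajectory sits inside a definable set in an o-minimal structure — is the essential use of the analytic hypothesis and is exactly what forces the tangent direction to exist.
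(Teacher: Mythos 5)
You should first note that the paper does not prove this statement at all: Conjecture \ref{c:conj0} is Thom's gradient conjecture for analytic functions, quoted as background; its proof is due to Kurdyka, Mostowski and Parusinski \cite{KMP}, and what the paper actually proves is an analog (in fact of the stronger Conjecture \ref{c:conj}) for $C^2$ solutions of the arrival time equation, by entirely different means (rescaled MCF, frequency functions for the drift Laplacian). So there is no in-paper proof to compare against; your proposal must be judged as an attempted proof of the KMP theorem.

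As such, it identifies the right circle of ideas (finite length via the Lojasiewicz gradient inequality, blow-up to the leading homogeneous form $P$, control of the spherical part $\theta(t)$ of the trajectory), but the decisive steps are asserted rather than proved. First, gradient flow lines of a homogeneous polynomial $P$ are not radial rays in general (already $P=x^2-y^2$ shows this); what is true, and requires proof, is that the limit set of $\theta(t)$ is a connected subset of the critical set of $P$ restricted to the sphere, which is exactly where the tangential gradient of $P$ degenerates. Second, the ``angular Lojasiewicz inequality'' $r\,|\theta'| \leq C\,|r'|\,r^{\epsilon}$ is the entire content of the theorem, and it does not follow from the spherical Lojasiewicz inequality for $P|_{S^{n-1}}$ plus perturbation: near the critical set of $P|_{S^{n-1}}$ the lower-order terms of $f$ can dominate the tangential gradient of $P$, which is precisely why the naive comparison fails and why KMP had to introduce the characteristic exponent $\ell^*$, the normalized function $f/r^{\ell^*}$, and a new Lojasiewicz-type inequality ``with exponent one'' for it. Third, the closing appeal to an iterative blow-up terminating after finitely many steps, and to the trajectory lying in a definable set of an o-minimal structure, is unsupported; trajectories of analytic gradient flows are not in general subanalytic or definable, and the finiteness of such a peeling procedure is not a consequence of analyticity alone. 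The proposal is therefore a plausible roadmap with genuine gaps at each of the points where the actual proof is hard, and it is not a proof.
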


    This 
 conjecture  arose 
 in  Thom's work on catastrophe theory and singularity theory  and became known as {\emph{Thom's gradient conjecture}}.  The conjecture  was finally proven in 2000 by Kurdyka, Mostowski, and Parusinski in \cite{KMP},
  but the following stronger conjecture remains open  (see page $282$ in Arnold's problem list,   \cite{A}):

\begin{Con}  	\label{c:conj}
  If a gradient flow line $x(t)$ for an analytic function  has a limit point, then the limit
  of the unit tangents $\frac{x'(t)}{|x'(t)|}$ exists. 
 \end{Con}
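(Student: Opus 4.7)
The starting point is the Kurdyka--Mostowski--Parusinski theorem (the Thom conjecture), which gives the limit of secants $v = \lim_{t\to\infty}(x(t)-x_\infty)/|x(t)-x_\infty|$. Since $x'(t) = -\nabla f(x(t))$ along the gradient flow, the statement to prove is equivalent to convergence of the unit vector $\nabla f(x(t))/|\nabla f(x(t))|$ as $t \to \infty$. The plan is to rescale so that the problem becomes a statement about a bounded, sphere-valued curve, and then to use Lojasiewicz-type inequalities to rule out angular oscillation of its velocity.

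Concretely, set $r(t) = |x(t) - x_\infty|$, reparametrize by $\tau = -\log r(t)$, and define $y(\tau) = (x(t) - x_\infty)/r(t) \in S^{n-1}$. By the Thom conjecture $y(\tau) \to v$. A direct calculation shows that $y'(\tau)$ splits into a radial piece parallel to $y$ (coming from the scale factor) and a tangential piece in $T_{y(\tau)} S^{n-1}$ which, up to a positive scalar, coincides with the tangential component of $\nabla f(x(t))/|\nabla f(x(t))|$. Hence convergence of the unit tangents $x'(t)/|x'(t)|$ is equivalent to $\int^\infty |y'(\tau)|\,d\tau < \infty$. To establish the latter I would combine the classical Lojasiewicz gradient inequality $|\nabla f(x)| \geq c\,|f(x) - f(x_\infty)|^\alpha$ with a second-order Lojasiewicz-type bound of the schematic form $|\Hess f(x)| \leq C\,|x - x_\infty|^{-\beta}\,|\nabla f(x)|^\gamma$ extracted from the analytic stratification of the critical set. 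A judicious choice of exponents should yield an integrable upper bound for $|y'(\tau)|$ on $(\tau_0,\infty)$, which would give the conclusion.

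The main obstacle is precisely this integrability. Lojasiewicz's finite-length estimate $\int |x'(t)|\,dt < \infty$ is much weaker than what is needed, because the time change $\tau = -\log r(t)$ stretches time enormously near $x_\infty$; a priori the direction of $\nabla f$ could swing many times while $x(t)$ covers only a short arc. Ruling this out requires genuinely new input: one must show that the Hessian of $f$, not just its gradient, is well controlled along the trajectory. In the purely analytic setting this appears to need a finer stratification theorem than the ones used by KMP for the secants. In the degenerate elliptic setting of the arrival time, which is the actual subject of this paper, the natural substitute should come from a Lojasiewicz inequality tailored to the level set equation together with uniqueness of blow-ups of the level sets at the critical point, exactly in the spirit of the principle advertised in the abstract.
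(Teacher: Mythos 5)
First, be aware that Conjecture \ref{c:conj} is stated in the paper as an \emph{open problem} for general analytic functions; the paper does not prove it. What the paper proves (Theorem \ref{t:thom}) is the analogous statement for $C^2$ solutions of the arrival time equation \eqr{e:arrivalu}, where the structure of the singularity --- cylindrical blow-ups, the rescaled MCF, and the explicit kernel of the linearized operator --- substitutes for the missing analyticity. So there is no proof in the paper for your proposal to match, and your write-up, to its credit, does not claim to close the argument either: you explicitly flag the integrability step as requiring ``genuinely new input.''

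That flagged step is indeed the genuine gap, and two points should be made about it. (i) Your claimed equivalence ``convergence of the unit tangents $\Leftrightarrow \int^\infty|y'(\tau)|\,d\tau<\infty$'' is not correct in either direction: finiteness of $\int|y'|\,d\tau$ gives convergence of the secants $y(\tau)$ (i.e.\ Thom's Conjecture \ref{c:conj0}, already known by Kurdyka--Mostowski--Parusinski), not convergence of $x'/|x'|$, since it does not force $y'(\tau)$ itself to tend to zero; conversely, convergence of the unit tangents gives $y'(\tau)\to 0$ but not integrability. (ii) The schematic second-order bound $|\Hess_f|\le C\,|x-x_\infty|^{-\beta}|\nabla f|^{\gamma}$ that you hope to extract is exactly what is unavailable, and even in the arrival-time setting the analogous naive estimate is \emph{not} summable: the essentially sharp rate of Proposition \ref{c:djsums1} gives $\sum_j\delta_j^{\bar\beta}<\infty$ for some $\bar\beta<1$, which controls $w^2$ but only gives $|\nabla H|\lesssim \delta_j^{\beta/2}$, a non-summable bound. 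The paper's resolution is structural rather than a sharper rate: decompose the unit tangent into the component along the axis of the limiting cylinder, which tends to zero because flow lines approach the critical set orthogonally (\eqr{e:flow3}), and the orthogonal component; then show that the entire obstruction to summability of the orthogonal part is carried by slowly decaying Jacobi fields --- elements \eqr{e:kernelcC} of the kernel of $\cL+1$ --- whose contribution to $\Pi_{j+1}(\nabla H)$ vanishes identically. Making that precise is the content of the frequency-function analysis in Theorems \ref{t:goalCM7} and \ref{t:app}. If you want to attack the analytic case along your lines, the missing idea is precisely this ``identify and subtract the non-integrable kernel of the linearization at the blow-up'' step; Lojasiewicz inequalities alone, of any order, will not supply it.
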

 
 It is easy to see that if $\lim_{t \to \infty} \,  \frac{x'(t)}{|x'(t)|}$ exists, then so does   $\lim_{t \to \infty} \, \frac{x(t) - x_{\infty} } { |  x(t) - x_{\infty}|}$.  It follows that the {\emph{Arnold-Thom conjecture}}
 \ref{c:conj} implies Thom's gradient conjecture \ref{c:conj0}.   Easy examples show that the
Lojasiewicz  theorem,  the Lojasiewicz inequalities, and both Conjectures \ref{c:conj0} and \ref{c:conj}   fail for general smooth functions; see, e.g.,  fig.~$3.5$ in \cite{Si} or fig.~$1$ in \cite{CM8}.

\vskip2mm
Analytic functions play an important role in  differential equations since solutions of   analytic elliptic equations are themselves analytic.  In many instances, the properties that come from being analytic are more important than analyticity itself.   We will show that solutions of an important degenerate elliptic equation  have analytic properties even though  solutions are not even $C^3$.
Namely, 
  we will show that Conjectures \ref{c:conj0}, \ref{c:conj} hold for solutions  of the classical degenerate elliptic equation, known as the {\emph{arrival time equation}},
 \begin{align}	\label{e:arrivalu}
	-1 = |\nabla u|\,\text{div}\left( \frac{\nabla u}{|\nabla u|}\right)\,  .
\end{align}
Here $u$ is defined on a compact connected subset of $\RR^{n+1}$ with smooth mean convex boundary.  Equation \eqr{e:arrivalu} is the prototype for a family of equations, see, e.g., \cite{OsSe},
 used for tracking moving interfaces in complex situations.  These equations have been instrumental in  applications, including semiconductor processing, fluid mechanics, medical imaging, computer graphics, and material sciences.

Even though   solutions  of \eqr{e:arrivalu} are   a priori only in the viscosity sense,  they are
 always twice differentiable by \cite{CM5}, though not necessarily $C^2$; see \cite{CM6}, \cite{H2}, \cite{I}, \cite{KS}. 
Even  when a solution is $C^2$,   it still might not be $C^3$, Sesum, \cite{S}, let alone analytic as in Lojasiewicz's theorem.  However, solutions behave like analytic functions are expected to:
   
  \begin{Thm}	\label{t:thom}
    The Arnold-Thom conjecture holds for  $C^2$ solutions of \eqr{e:arrivalu}.
  \end{Thm}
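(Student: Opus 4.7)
The plan is to analyze how $\nabla u$ behaves along a gradient flow line $x(t)$ as it approaches a critical point $x_\infty$ of $u$. Since the arrival time equation forces the level sets of $u$ near $x_\infty$ to resemble a shrinker for mean curvature flow, one has substantial extra rigidity beyond what $u \in C^2$ alone gives. I would first translate so that $x_\infty = 0$, and then, using Thom's secant conjecture (which I expect to be established in parallel, in the spirit of the Colding--Minicozzi work on uniqueness of tangent flows), assume that $v := \lim x(t)/|x(t)|$ exists, reducing the task to showing that $\nabla u(x(t))/|\nabla u(x(t))|$ has a limit.

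Since $u \in C^2$, a first-order Taylor expansion gives
\[
\nabla u(x) = A\, x + o(|x|), \qquad A := \nabla^2 u(0).
\]
If $Av \neq 0$, then along $x(t)$ we immediately obtain
\[
\frac{\nabla u(x(t))}{|\nabla u(x(t))|} \;\longrightarrow\; \frac{A v}{|A v|},
\]
handling in particular the case in which the tangent shrinker at $0$ is a round sphere, where $A$ is a nonzero multiple of the identity, and more generally the case in which the secant direction is transverse to the axis of a cylindrical shrinker.

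The substantive case, which I expect to be the main obstacle, is $v \in \Ker A$. This occurs when the tangent shrinker is a generalized cylinder $\SS^k \times \RR^{n-k}$ and the secant direction points along the $\RR^{n-k}$ axis; along $v$, $\nabla u$ vanishes superlinearly and the bare Taylor expansion carries no directional information. My plan is to replace the crude Taylor expansion with a finer, shrinker-based asymptotic. Uniqueness of the tangent shrinker, combined with Lojasiewicz--Simon type estimates for shrinkers along the rescaled mean curvature flow (as developed in earlier Colding--Minicozzi work), should yield an effective expansion of the form $\nabla u(x) = |x|^{\alpha}\,\bigl(w(x/|x|) + o(1)\bigr)$ for some $\alpha > 1$ and some continuous direction field $w$ on the sphere. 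Evaluating along $x(t)$ with $x(t)/|x(t)| \to v$ then gives $\nabla u(x(t))/|\nabla u(x(t))| \to w(v)/|w(v)|$.

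The delicate point is rigorously producing such a higher-order expansion from the hypothesis that $u$ is only $C^2$, and in particular ruling out rotation of $\nabla u$ within $\Ker A$ as $t \to \infty$. For this I would transport parabolic Lojasiewicz--Simon inequalities, which control the rate at which the rescaled flow approaches its tangent shrinker, back to the stationary picture via the arrival-time equation $-1 = |\nabla u|\,\dv(\nabla u/|\nabla u|)$. Coupling these effective rates with uniqueness of the tangent flow is what I expect to exclude spiraling of $\nabla u$ along the axis directions and thereby establish the existence of the unit tangent limit.
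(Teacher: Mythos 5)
There is a genuine gap, and it sits exactly where you defer the work. Your argument takes Thom's secant conjecture for the arrival time as an input to be ``established in parallel,'' but for this equation the secant statement is not available and is not a consequence of uniqueness of tangent flows: the result of \cite{CM2} only gives that the rescaled level sets converge to a fixed cylinder $\cC$, at a rate (cf.\ Proposition \ref{c:djsums1}) that is \emph{not} summable enough to control the rotation of an individual flow line --- this non-summability, caused by the kernel of the linearized operator $\cL+1$, is precisely the obstruction the paper must overcome. The paper never proves the secant version first; it proves convergence of the unit tangent directly (from which the secant version follows), so the step you postpone is essentially the whole theorem. Note also that if the secant limit $v$ were known to exist, your ``easy case'' would already finish the proof: by Theorem \ref{l:flowlines} flow lines approach the critical set $\cS$ orthogonally ($\Pi_{\text{axis}}(\gamma_s)\to 0$) with $|\gamma(s)-\gamma(0)|\approx s$, so $v$ is automatically orthogonal to $\Ker \Hess_u$ and $\Hess_u(0)v=-v/(n-k)\neq 0$. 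This shows both that assuming the secant conjecture is circular and that your ``substantive case'' $v\in\Ker A$ never occurs.

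You have consequently misplaced the difficulty. The danger is not superlinear vanishing of $\nabla u$ along the axis --- indeed the proposed expansion $\nabla u(x)=|x|^{\alpha}(w(x/|x|)+o(1))$ with $\alpha>1$ is inconsistent with the gradient Lojasiewicz inequality \eqr{e:uniqun}, which forces $|\nabla u|\approx c\,|x-x_\infty|$ along flow lines --- but rather possible slow rotation of the unit tangent \emph{within the subspace orthogonal to the axis}, driven by the slowly decaying Jacobi fields \eqr{e:kernelcC} (quadratic polynomials and infinitesimal rotations). The paper's proof requires three ingredients absent from your outline: a sharpened decay estimate $\sum_j\delta_j^{\bar\beta}<\infty$ for some $\bar\beta<1$ (Proposition \ref{c:djsums1}); a frequency-function argument (Theorems \ref{t:app} and \ref{t:goalCM7}) showing that the graph function, after subtracting its kernel part $\tilde w$, is of size $\mu^{\nu}$ with $\nu$ close to $1$; and the observation that $\tilde w$ contributes nothing to $\Pi_{j+1}(\nabla H)$, so only the fast-decaying remainder enters the summability estimate \eqr{e:rmcfa}. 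The ``Lojasiewicz--Simon rates from earlier work'' you invoke are explicitly insufficient for this purpose, so the proposal as written does not close.
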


The geometric meaning of  \eqr{e:arrivalu} is that the level sets $u^{-1} ( t)$  are mean convex and evolve by mean curvature flow.  
One says that $u$ is the {\emph{arrival time}} since $u(x)$ is  the time the hypersurfaces $u^{-1}(t)$ arrive at $x$ under the mean curvature flow; see
   Chen-Giga-Goto, \cite{ChGG}, Evans-Spruck, \cite{ES}, Osher-Sethian, \cite{OsSe},  and \cite{CM3}.  
Geometrically, 
singular points for the   flow correspond to critical points for $u$.

We conjecture that even for solutions that are not $C^2$, but merely twice differentiable, the Arnold-Thom conjecture holds:

\begin{Con}
Lojasiewicz's inequalities and the Arnold-Thom conjecture hold for all solutions of \eqr{e:arrivalu}.
\end{Con}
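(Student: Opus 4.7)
The plan is to combine the rigid structure that \eqr{e:arrivalu} imposes on $\Hess u$ at a critical point with a direct analysis of the gradient flow in Morse-Bott coordinates, and then to read off Arnold-Thom from the $C^2$ Taylor expansion of $\nabla u$.

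\textbf{Step 1: Rigidity of the Hessian at a critical point.} Rewriting \eqr{e:arrivalu} as $\Delta u - \Hess u(\nabla u/|\nabla u|,\nabla u/|\nabla u|) = -1$ at every non-critical point and using the continuity of $\Hess u$, taking limits along sequences $x_j \to x_\infty$ for which $\nabla u(x_j)/|\nabla u(x_j)| \to \nu$ yields $\Hess u(x_\infty)(\nu,\nu) = \Delta u(x_\infty) + 1$. The cylindrical structure of the mean curvature flow singularity underlying a $C^2$ arrival time (cf.\ \cite{CM6}) ensures that $\nabla u/|\nabla u|$ realizes every direction in the sphere normal to the critical set at $x_\infty$, so $\Hess u(x_\infty)(\nu,\nu)$ is constant in $\nu$ transverse to the critical set. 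Taking the trace gives
\[
\Hess u(x_\infty) = -\tfrac{1}{n-k}\,\Pi,
\]
where $\Pi$ is orthogonal projection onto an $(n+1-k)$-dimensional subspace. The implicit function theorem then makes the critical set of $u$ a smooth $k$-dimensional submanifold $\cC$ through $x_\infty$ with $T_{x_\infty}\cC = \Ker \Hess u(x_\infty)$.

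\textbf{Step 2: Flow analysis and Thom.} Decompose $x(t) - x_\infty = \xi(t) + \eta(t)$ with $\xi(t)$ in the range of $\Pi$ and $\eta(t) \in T_{x_\infty}\cC$. The $C^2$ Taylor expansion of $\nabla u$ together with Step 1 gives
\[
\xi'(t) = -\tfrac{1}{n-k}\,\xi(t) + o(|\xi|+|\eta|), \qquad \eta'(t) = o(|\xi|+|\eta|).
\]
A Gronwall-type bootstrap shows that $|\xi(t)|$ decays exponentially and that $|\eta(t)| = o(|\xi(t)|)$, hence $|x(t) - x_\infty| \sim |\xi(t)|$ and the secant
\[
v_\infty = \lim_{t\to\infty} \frac{x(t)-x_\infty}{|x(t)-x_\infty|}
\]
exists, lies in the range of $\Pi$, and in particular $v_\infty \notin \Ker \Hess u(x_\infty)$. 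This yields Thom's conjecture for $u$.

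\textbf{Step 3: Arnold-Thom.} From $\nabla u(x) = \Hess u(x_\infty)(x-x_\infty) + o(|x-x_\infty|)$, dividing by $|x(t)-x_\infty|$ and letting $t \to \infty$ gives
\[
\frac{\nabla u(x(t))}{|x(t)-x_\infty|} \longrightarrow \Hess u(x_\infty)\, v_\infty = -\tfrac{1}{n-k}\, v_\infty \neq 0.
\]
Hence $x'(t)/|x'(t)| = \nabla u(x(t))/|\nabla u(x(t))|$ converges to $-v_\infty$, which is the Arnold-Thom conclusion.

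\textbf{Main obstacle.} The crux is Step 1, where the $C^2$ arrival time equation imposes a rigid normal form on $\Hess u$ at every critical point---a feature peculiar to \eqr{e:arrivalu} that fails for general $C^2$ functions and that underlies why the conjectures hold here. Equally delicate is the bootstrap in Step 2: since $u$ need not be $C^3$ by \cite{S}, the Taylor remainders are only $o(\cdot)$, not $O(\cdot)$, and one must exclude the pathology in which $\eta$ dominates $\xi$ along the flow; this is possible only because the leading drift $-\tfrac{1}{n-k}\,\xi$ provided by Step 1 is genuinely linear and contracting in the normal directions.
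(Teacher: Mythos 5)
The statement you are proving is stated in the paper as an open \emph{conjecture}, not a theorem: it asserts the Lojasiewicz inequalities and Arnold--Thom for \emph{all} solutions of \eqref{e:arrivalu}, which are a priori only twice differentiable, not $C^2$. The paper proves only the $C^2$ case (Theorem \ref{t:thom}) and explicitly leaves the general case open. Your Step 1 is built on ``the continuity of $\Hess u$,'' i.e.\ precisely the $C^2$ hypothesis the conjecture is meant to dispense with. For merely twice differentiable solutions the rigid normal form $\Hess_u(x_\infty)=-\frac{1}{n-k}\Pi$ is not available in the form you use it, and the Angenent--Vel\'azquez examples cited in Section \ref{s:s1} show that the Lojasiewicz inequality with exponent $p=2$ genuinely fails there ($|u-u(y)|\approx|\nabla u|^{m/(m-1)}$ along sequences), so your argument cannot possibly cover the conjectured generality.

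Even restricted to the $C^2$ case, Step 2 contains the fatal gap. With remainders that are only $o(|\xi|+|\eta|)$ (and they cannot be improved to $O(|x-x_\infty|^{1+\alpha})$ since $u$ need not be $C^3$ by \cite{S}), no Gronwall-type argument yields convergence of the secant. The linear part $-\frac{1}{n-k}\Pi$ has a single eigenvalue on an $(n+1-k)$-dimensional eigenspace, so an $o(1)$ perturbation is free to rotate $\xi$ within that eigenspace forever: the model $\xi'=-\lambda\xi+\epsilon(t)J\xi$ with $J$ a rotation, $\epsilon(t)\to 0$ but $\int\epsilon=\infty$, has $\xi/|\xi|$ spiraling without limit. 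This is exactly the spiral in the paper's figure and exactly why Lojasiewicz's theorem and both conjectures fail for general smooth non-analytic functions; having the correct Hessian at the limit point does not rule it out. (Your Step 3 is actually a sound reduction of Arnold--Thom to Thom given the rigid Hessian, but Thom is the hard part and you have not proved it.) The paper's proof of the $C^2$ case has to overcome precisely this: it transfers the problem to the rescaled mean curvature flow, proves the sharp summability $\sum\delta_j^{\bar\beta}<\infty$ for some $\bar\beta<1$ (Proposition \ref{c:djsums1}), and then runs a frequency-function analysis on the cylinder (Theorems \ref{t:goalCM7} and \ref{t:app}) to subtract off the slowly decaying Jacobi fields and show that only the component of $\nabla H$ orthogonal to the axis matters and that this component is summable. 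None of that machinery is replaceable by the soft ODE argument you propose.
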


If this conjecture holds, then   the gradient Lojasiewicz inequality would imply that the flow is  singular at only finitely many times as has been conjectured,  \cite{W3}, \cite{AAG}, \cite{Wa}, \cite{M}.

\vskip2mm
One of the important ingredients in the proof of Theorem \ref{t:thom} is an essentially sharp rate of convergence for the rescaled mean curvature flow; this will be given in Proposition
\ref{c:djsums1}
below.  This rate is not fast enough to directly show the convergence of unit tangents, which is closely related to the existence of a non-integrable kernel of the linearized operator.  However,
we  overcome this by a careful analysis of this kernel.

\vskip1mm
  We believe that the principle that solutions of degenerate equations behave as though they are analytic, even when they are not, should be quite general.  For instance, there should be versions for other flows, including Ricci flow; cf. \cite{CM9}.

\section{Lojasiewicz theorem for the arrival time}	\label{s:s1}

  A function $v$ satisfies a gradient Lojasiewicz inequality near a point $y$ (see, e.g., \cite{CM8}) if there exist $p> 1$,  $C$ and a neighborhood of $y$ (all depending on $v$ and $y$) so that  
\begin{align}	\label{e:lou}
	|v - v(y)| \leq C \, |\nabla v|^p \, .
\end{align}
This is nontrivial only if $y$ is a critical point.  
If $\nabla v (y) = 0$ and $v$ satisfies \eqr{e:lou}, then 
  $v(y)$ is the only critical value in this neighborhood (this applies for any $p>0$).

  In this section, we show \eqr{e:lou} with $p=2$ for a   $C^2$ solution $u$ of  \eqr{e:arrivalu}.
 When $u$ is not $C^2$, then \eqr{e:lou} can fail for any fixed $p> 1$.  Namely, for any odd integer $m\geq 3$,     Angenent and Vel\'azquez construct rotationally symmetric examples in \cite{AV} where  $|u-u(y)| \approx |\nabla u|^{ \frac{m}{m-1}}$ for a sequence of points tending to $y$.  The examples in \cite{AV} were constructed to analyze so-called type II singularities that were previously observed by Hamilton and proven rigorously to exist by Altschuler-Angenent-Giga,   \cite{AAG}; cf. also \cite{GK}.

 \vskip2mm
 From now on, $u$ will be $C^2$. To prove \eqr{e:lou}, we first   recall   the properties that we will use.  
Namely, if   $\cS = \{ x \, | \, \nabla u(x) = 0 \}$ denotes  the critical set,{\footnote{The flow  is smooth away from the singular set $\cS$  consisting of cylindrical singularities; see, \cite{W1}, \cite{W2},   \cite{H1}, \cite{HS1}, \cite{HS2}, \cite{HaK}, \cite{An}; cf. \cite{B}, \cite{CM1}.  See also \cite{CM4}.}} 
then \cite{CM5} and \cite{CM6} give:
\begin{enumerate}
\item[($\cS 1$)]   $\cS$ is a closed embedded connected $k$-dimensional $C^1$ submanifold whose tangent space is   the kernel of $\Hess_u$.   Moreover, $\cS$ lies in the interior of the region where $u$ is defined.
\item[($\cS 2$)] 
 If $q \in \cS$, then
	$\Hess_u (q) = - \frac{1}{n-k} \, \Pi$ and $\Delta u (q) = - \frac{n+1-k}{n-k}$, 
 where $\Pi$ is orthogonal projection onto the orthogonal complement of the kernel.
 \end{enumerate}
After subtracting a constant, we can assume that   $\sup u = 0$.

Using these properties, the next theorem gives the gradient Lojasiewicz inequality.  

\begin{Thm}	\label{l:gL}
We have that $u(\cS) = 0$ and 
\begin{align}	\label{e:uniqun}
 	\frac{|\nabla u|^2}{-u} \to \frac{2}{n-k} {\text{ as }} u \to 0  \, .
 \end{align}
 In particular, there exists $C>0$ so that $C^{-1} \, |\nabla u|^2 \leq -u \leq C \, |\nabla u|^2$.
 \end{Thm}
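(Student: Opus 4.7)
The plan is to reduce the theorem to a uniform second-order Taylor expansion of $u$ at the nearest point of $\cS$, using the algebraic description of $\Hess_u$ on $\cS$ given by $(\cS 2)$.

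\textbf{Step 1: $u(\cS) = 0$ and $\{u=0\}=\cS$.} Since $\cS$ is connected by $(\cS 1)$ and $\nabla u \equiv 0$ on $\cS$, the value $u|_\cS$ is some constant $c$. Because $u$ is continuous on a compact domain, it attains $\sup u = 0$; at any maximizer $p$ the gradient vanishes, so $p \in \cS$ and $c=0$. Conversely, if $u(x)=0$ for some $x$, then $x$ is a maximum of $u$ and hence $\nabla u(x)=0$, so $x \in \cS$. In particular $u<0$ on the complement of $\cS$, and $u(x) \to 0$ is equivalent to $\dist(x,\cS) \to 0$ (by compactness).

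\textbf{Step 2: Projection to $\cS$.} For $x$ in a small neighborhood of $\cS$, pick $q=q(x)\in \cS$ minimizing $|x-q|$. First-order optimality plus the identification $T_q \cS = \Ker \Hess_u(q)$ from $(\cS 1)$ says $x-q$ is orthogonal to $T_q\cS$; equivalently, writing $\Pi = \Pi_q$ for the orthogonal projection onto the normal space,
\[
\Pi_q(x-q) = x-q, \qquad |x-q| = \dist(x,\cS).
\]

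\textbf{Step 3: Uniform Taylor expansion and the limit.} Since $u$ is $C^2$ on a compact set and $\cS$ is compact, the modulus $\eta(r) := \sup\{\,|\Hess_u(y)-\Hess_u(q)| : q \in \cS,\ |y-q| \le r\,\}$ tends to $0$ as $r \to 0$. Using $u(q)=0$, $\nabla u(q)=0$, and $\Hess_u(q) = -\tfrac{1}{n-k}\,\Pi_q$ from $(\cS 2)$, the integral forms of Taylor's theorem yield, uniformly in $q$,
\[
\nabla u(x) = -\frac{1}{n-k}(x-q) + E_1, \qquad u(x) = -\frac{1}{2(n-k)}|x-q|^2 + E_2,
\]
with $|E_1| \le \eta(|x-q|)\,|x-q|$ and $|E_2| \le \eta(|x-q|)\,|x-q|^2$. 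Squaring the first identity and dividing by the second gives
\[
\frac{|\nabla u(x)|^2}{-u(x)} = \frac{2}{n-k}\Bigl(1 + O(\eta(|x-q|))\Bigr),
\]
and as $u(x) \to 0$ we have $|x-q| = \dist(x,\cS) \to 0$, establishing \eqref{e:uniqun}. The two-sided Lojasiewicz inequality then follows by combining this asymptotic in a tubular neighborhood of $\cS$ with the compactness bound $|\nabla u| \ge \delta > 0$ on the compact set $\{-u \ge \varepsilon\}$ (valid since $\{\nabla u = 0\} = \cS = \{u=0\}$).

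\textbf{Main obstacle.} The real work is Step~3: making the error $o(|x-q|^2)$ \emph{uniform} as the basepoint $q$ varies along $\cS$, and ensuring the quadratic term does not accidentally degenerate. The first issue is handled by the $C^2$ hypothesis together with the compactness of $\cS$; the second issue would be genuine if $x-q$ had a component along $T_q\cS$ (where $\Hess_u(q)$ vanishes), but choosing $q$ to be the nearest point in $\cS$ forces $\Pi_q(x-q) = x-q$, so the quadratic form $\Hess_u(q)$ is seen only in its nondegenerate direction $\Pi_q$, making the expansion genuinely second-order with size $\dist(x,\cS)^2$.
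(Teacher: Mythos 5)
Your proposal is correct and follows essentially the same route as the paper: the nearest-point projection onto $\cS$, orthogonality of $x-q$ to $T_q\cS = \Ker \Hess_u(q)$, the formula $\Hess_u(q) = -\tfrac{1}{n-k}\Pi_q$ from $(\cS 2)$, and a uniform second-order Taylor expansion via the fundamental theorem of calculus and uniform continuity of the Hessian. The only cosmetic difference is that the paper phrases the error control with an $\epsilon$-$\delta$ choice rather than a modulus of continuity $\eta$, and it justifies interiority of the maximum by noting $\nabla u \neq 0$ on the mean convex boundary, a point worth making explicit in your Step 1.
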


 \begin{proof}
The boundary of the domain is smooth and mean convex, so $\nabla u \ne 0$  on the boundary.  The normalization $\sup u =0$ implies that $u=0$ at any maximum.  Thus, there is at least one point in $\cS$ with $u=0$.  By ($\cS 1$),  $u$ is constant on $\cS$ and we conclude that $u(\cS) = 0$.
 
 Given $\epsilon > 0$, choose $\delta > 0$ so that $|p-q| < \delta$ implies that $|u_{ij} (p) - u_{ij} (q)| < \epsilon$ and, moreover, so that the $\delta$-tubular neighborhood of $\cS$ does not intersect the boundary of the domain.  Let $q$ be any point with $\dist (q, \cS) < \delta$ and then let $p$ be a point in the compact set $\cS$ that minimizes the distance to $q$ (note that $p$ might not be unique).  
 Since $\cS$ is $C^1$, the minimizing property implies that the vector $q-p$ is orthogonal to the tangent space to $\cS$.  In particular, ($\cS 2$) implies that
 \begin{align}	\label{e:heukk}
 	\Hess_u(p)(q-p) = - \frac{q-p}{n-k} \, .
 \end{align}
 Given $t \in (0,1]$, the fundamental theorem of calculus gives
 \begin{align}
 	\nabla u (p+t(q-p)) &= \int_0^t \Hess_u(p+s(q-p))(q-p) \, ds \, .
 \end{align}
 Combining this with \eqr{e:heukk} and the continuity of the Hessian gives
  \begin{align}	\label{e:gradub}
 	\left| \nabla u (p+t(q-p)) +   t\,  \frac{q-p}{n-k} \right|  \leq  \epsilon \, t \, |q-p| \, .
 \end{align}
 Using this at $t=1$ gives
  \begin{align}	\label{e:gradub2}
 	\left| \nabla u (q) +      \frac{q-p}{n-k} \right|  \leq  \epsilon  \, |q-p| \, .
 \end{align}
 Using the fundamental theorem of calculus on $u$ this time, \eqr{e:gradub} gives that
 \begin{align}
 	\left| u (q) + \frac{|p-q|^2}{2(n-k)} \right| \leq \int_0^1 \left| \langle \nabla u (p+t(q-p)) +   t\,  \frac{q-p}{n-k} , q-p \rangle \right| \, dt \leq \frac{\epsilon}{2} \, |p-q|^2 \, .
 \end{align}
 Since $\epsilon > 0$ is arbitrary, combining the last two inequalities gives \eqr{e:uniqun}.

 The last claim follows from \eqr{e:uniqun} since $\{ u = 0 \} = \{ |\nabla u| = 0\} = \cS$.
  \end{proof}

\vskip2mm
The next theorem shows that the gradient flow lines of $u$  have finite length (this is the Lojasiewicz theorem for $u$), converge to points in $\cS$, and approach $\cS$ orthogonally.   The first claims follow immediately from the gradient Lojasiewicz inequality of Theorem \ref{l:gL}.
Let $\Pi_{\text{axis}}$ denote orthogonal projection onto the kernel of $\Hess_u$.

\begin{Thm}	\label{l:flowlines}
Each flow line $\gamma$  for $\nabla u$ has finite length and limits to a point in $\cS$.
 Moreover, if we parametrize $\gamma$ by   $s\geq 0$ with $|\gamma_s| =1$ and $\gamma (0) \in \cS$, then
\begin{align}
	u ( \gamma (s)) & \approx   \frac{-s^2}{2(n-k)}   \, ,  \label{e:flow1}  \\
	\left|\nabla u   ( \gamma (s))  \right|^2 & \approx \frac{s^2}{(n-k)^2} \, ,   \label{e:flow2}  \\
		\Pi_{\text{axis}} (\gamma_s) &\to 0 \, .   \label{e:flow3}
\end{align}
In particular, for $s$ small, we have  that $\gamma (s) \subset B_{ 2n \, \sqrt{ -u(\gamma(s))}}  (\gamma(0))$.
\end{Thm}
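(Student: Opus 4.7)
The plan is to use the gradient Lojasiewicz inequality of Theorem \ref{l:gL} to obtain finite length and the quantitative rates \eqref{e:flow1}--\eqref{e:flow2}, and then to exploit the Hessian expansion from the proof of Theorem \ref{l:gL} to pin down the asymptotic direction of $\gamma_s$. First, take an integral curve $\gamma(t)$ of $\nabla u$, so $\frac{d}{dt} u(\gamma) = |\nabla u|^2 \geq 0$; since $u \leq 0$, the value $u(\gamma)$ is monotone and bounded, hence converges. Differentiating $\sqrt{-u}$ along the flow and invoking $|\nabla u|^2 \geq C^{-1}(-u)$ from Theorem \ref{l:gL} gives
\begin{equation*}
-\frac{d}{dt}\sqrt{-u(\gamma(t))} \;=\; \frac{|\nabla u|^2}{2\sqrt{-u}} \;\geq\; \frac{|\nabla u|}{2\sqrt{C}},
\end{equation*}
so integrating yields $\int_0^\infty |\nabla u|\, dt \leq 2\sqrt{C}\,\sqrt{-u(\gamma(0))}$, establishing finite length and convergence to a point $q_\infty$; since $|\nabla u| \to 0$ along $\gamma$ and $\nabla u$ is continuous, $q_\infty \in \cS$. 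Reparametrize by arc length measured from $q_\infty$, so that $\gamma(0)=q_\infty$ and $\gamma_s = -\nabla u/|\nabla u|$; then $\frac{d}{ds}\sqrt{-u(\gamma(s))} = |\nabla u|/(2\sqrt{-u})$, which by \eqref{e:uniqun} tends to $1/\sqrt{2(n-k)}$ as $s \to 0^+$. Integrating in $s$ delivers \eqref{e:flow1}, and then \eqref{e:flow2} follows by applying \eqref{e:uniqun} once more.

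For \eqref{e:flow3}, the idea is to show that $\gamma_s$ is asymptotically a unit normal to $\cS$. For small $s>0$, let $p(s) \in \cS$ be a nearest point to $\gamma(s)$. Since $\cS$ is $C^1$ by ($\cS 1$), the displacement $v(s) := \gamma(s)-p(s)$ is orthogonal to $T_{p(s)}\cS$. The Hessian expansion \eqref{e:gradub2} from the proof of Theorem \ref{l:gL} yields, for any prescribed $\epsilon>0$ and correspondingly small $s$,
\begin{equation*}
\Big|\,\nabla u(\gamma(s)) + \tfrac{1}{n-k}\,v(s)\,\Big| \;\leq\; \epsilon\,|v(s)|.
\end{equation*}
In particular $|\nabla u| = (1+o(1))|v(s)|/(n-k)$, and dividing the display by $|\nabla u|$ forces the unit vector $\gamma_s = -\nabla u/|\nabla u|$ to lie within $o(1)$ of the unit vector $v(s)/|v(s)| \in T_{p(s)}\cS^\perp$. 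Since $|p(s)-q_\infty| \leq |p(s)-\gamma(s)| + |\gamma(s)-q_\infty| \leq 2|\gamma(s)-q_\infty| \to 0$, the $C^1$ regularity of $\cS$ ensures that $T_{p(s)}\cS$ converges in the Grassmannian to $T_{q_\infty}\cS = \ker \Hess_u(q_\infty)$, so that $\Pi_{\text{axis}}(\gamma_s) \to 0$ as desired.

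Finally, unit speed gives $|\gamma(s)-\gamma(0)| \leq s$, and \eqref{e:flow1} gives $s \leq (1+o(1))\sqrt{2(n-k)}\,\sqrt{-u(\gamma(s))} \leq 2n\,\sqrt{-u(\gamma(s))}$ for $s$ small, yielding the stated inclusion. I expect the main obstacle to be the orthogonality step \eqref{e:flow3}: it requires simultaneously passing to the limit in the first-order Hessian expansion around the \emph{moving} basepoint $p(s)$ and using only the $C^1$ regularity of $\cS$ from ($\cS 1$) to transport continuity of tangent spaces up to $q_\infty$, rather than relying on any fixed quadratic model at the limit point.
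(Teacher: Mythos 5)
Your proof is correct. The first part (finite length, convergence to $\cS$, and \eqref{e:flow1}--\eqref{e:flow2}) is essentially the paper's argument in a different parametrization: both reduce to the quantitative statement \eqref{e:uniqun} of Theorem \ref{l:gL}, you via the ODE for $\sqrt{-u}$ along the flow, the paper via the integral $\int \frac{dt}{|\nabla u|}$ in the parametrization $u\circ\gamma(t)=t$. Where you genuinely diverge is \eqref{e:flow3}. The paper gets it in one line from the PDE: the arrival time equation gives $\Hess_u(\gamma_s,\gamma_s)=\frac{\Hess_u(\nabla u,\nabla u)}{|\nabla u|^2}=\Delta u+1$, which by continuity of $\Delta u$ and ($\cS 2$) tends to $-\frac{1}{n-k}$; since $\Hess_u\to-\frac{1}{n-k}\Pi$, the quadratic form can only attain that value on unit vectors aligned with the image of $\Pi$, forcing $\Pi_{\text{axis}}(\gamma_s)\to 0$. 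Your foot-point argument instead reuses the expansion \eqref{e:gradub2} to show $\gamma_s$ is $o(1)$-close to the unit normal $v(s)/|v(s)|$ of $\cS$ at the nearest point $p(s)$, and then transports orthogonality to the limit via $C^1$ continuity of $T\cS$ and $p(s)\to\gamma(0)$. Both are sound; the paper's version is shorter and exploits the equation directly, while yours is purely geometric given the ingredients already established in the proof of Theorem \ref{l:gL} (it never re-invokes \eqref{e:arrivalu}), at the cost of the extra bookkeeping with the moving basepoint. Your derivation of the final inclusion from \eqref{e:flow1} and unit speed matches the paper's.
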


\begin{proof}
   Each point lies on a flow line where $u$ is increasing and limits to $0$, so $\gamma$ limits to $\cS$.  If we parametrize $\gamma$ by time $t$ (so that $  u\circ \gamma (t) =t $ and $|\gamma_t| = \frac{1}{|\nabla u|}$), then the length is  
   \begin{align}	\label{e:sT}
   	\int_T^0 \frac{1}{|\nabla u|} \, dt \approx \sqrt{ \frac{n-k}{2} } \, \int_T^0 \frac{1}{ \sqrt{-u}} \, dt = \sqrt{ \frac{n-k}{2} } \, \int_T^0 \frac{1}{ \sqrt{-t}} \, dt =  \sqrt{ 2\, (k-n)\,T } \, ,
   \end{align}
   where the approximation used \eqr{e:uniqun}.
   In particular, the flow lines starting from $u= T$ have finite length approximately equal to $ \sqrt{ 2\, (k-n)\,T }$.    It follows that $\gamma$ has a limit $\gamma (0) \in \cS$ as $t \to 0$ and 
   we get the approximation \eqr{e:flow1}.  Combining \eqr{e:flow1} and \eqr{e:uniqun} gives \eqr{e:flow2}.

For $s> 0$, the arrival time equation \eqr{e:arrivalu}, continuity of $\Delta u$, and ($\cS 2$) give  that 
 \begin{align}
 	\Hess_u (\gamma_s , \gamma_s) = \frac{ \Hess_u ( \nabla u , \nabla u) }{|\nabla u|^2} = \Delta u (\gamma(s)) + 1  \to  \Delta u(\gamma(0)) + 1  =  - \frac{1}{n-k} \, .
 \end{align}
Since $\Hess_u \to - \frac{1}{n-k}  \, \Pi$, we conclude that $\Pi_{\text{axis}} (\gamma_s) \to 0$, giving the third claim.  Finally, the last claim follows from \eqr{e:flow1} and $|\gamma_s| = 1$.
\end{proof}

\section{Reducing Theorem \ref{t:thom} to an estimate for rescaled MCF}

   In this section, we will reduce  the Arnold-Thom conjecture to an estimate for rescaled mean curvature flow.

A one-parameter family of hypersurfaces $M_{\tau}$ evolves by {\emph{mean curvature flow}} (or {\emph{MCF}}) if each point $x(\tau)$ evolves by 
$\partial_{\tau} x =   -  H  \, \nn$.  Here $H$ is the mean curvature and $\nn$ a unit normal.  
 The rescaled MCF  
$ 	\Sigma_t = \frac{1}{\sqrt{-u}} \, \{ x   \, | \, u(x) = - \e^{-t} \} 
$ is equivalent to simultaneously running MCF and rescaling space, up to reparameterizations of time and the hypersurfaces.
A one-parameter family of hypersurfaces $\Sigma_t$
flows 
by the {\emph{rescaled  MCF}}  if  
\begin{align}
	\partial_t x =   - \left( H - \frac{1}{2} \, \langle x , \nn \rangle \right) \, \nn   \, .
\end{align}
It will be convenient to set $\phi = H - \frac{1}{2} \, \langle x , \nn \rangle$.
The fixed points for rescaled MCF are shrinkers where $\phi =0$;  the most important examples are cylinders
$\cC = \SS^{n-k}_{\sqrt{2(n-k)}} \times \RR^k$ where $k = 0 , \dots , n-1$.   Below, $\Pi:\RR^{n+1} \to  \RR^{n-k+1}$ is orthogonal projection on 
the orthogonal complement of the axis $  \RR^k$ of the cylinder $\cC$.
The rescaled MCF   is the negative gradient flow for the Gaussian area  
\begin{align}
	F (\Sigma) \equiv \int_{\Sigma} \e^{ - \frac{|x|^2}{4} }   \, .
\end{align} 
In particular, $F(\Sigma_t)$ is non-increasing.
Define the sequence $\delta_j$ by
\begin{align}	\label{e:deltaj}
	\delta_j = \sqrt{F (\Sigma_{j-1}) - F(\Sigma_{j+2})} \, .
\end{align}
As in \cite{CM1}, the entropy $\lambda (\Sigma)$ is   $  \sup_{t_0 > 0 , x_0 \in \RR^{n+1}} \, F( t_0 \Sigma + x_0)$. We will use
the Gaussian $L^p$ norm  given by
 $
  	\| g \|_{L^p (\Sigma_t)}^p \equiv  \int_{\Sigma_t} |g|^p \, \e^{ - \frac{|x|^2}{4} } $.

\subsection{Summability of   $\delta_j$}

As we will see in \eqr{e:phijj1} below, $\delta_j$   bounds the distance that $\Sigma_t$ evolves from $j$ to $j+1$.  Existence of $\lim_{t \to \infty} \Sigma_t$ is proven in \cite{CM2} by showing that $\sum \delta_j < \infty$.
We will need that $\delta_j$ is summable even after being raised to a power  less than one:

\begin{Pro}	\label{c:djsums1}
There exists $\bar{\beta} < 1$ so that  
\begin{align}	\label{e:djsums1}
	\sum_{j=1}^{\infty} \, \delta_j^{ \bar{\beta}} < \infty \, .
\end{align}
\end{Pro}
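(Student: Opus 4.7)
The plan is to deduce summability of $\delta_j^{\bar\beta}$ from a polynomial decay rate for the $F$-functional, via the standard Lojasiewicz--Simon gradient flow framework. By \cite{CM2} the rescaled flow $\Sigma_t$ converges, as $t \to \infty$, to a cylindrical shrinker $\cC$, and along rescaled MCF the Gaussian area is non-increasing with $\tfrac{d}{dt}F(\Sigma_t) = -\|\phi\|_{L^2(\Sigma_t)}^2$, so $F(\Sigma_t) \searrow F(\cC)$. The key analytic input is a Lojasiewicz-type inequality for $F$ at $\cC$,
$$
\bigl(F(\Sigma) - F(\cC)\bigr)^{1-\theta} \leq C\, \|\phi\|_{L^2(\Sigma)},
$$
valid for $\Sigma$ in a suitable neighborhood of $\cC$, with some exponent $\theta \in (1/4,1/2)$. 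The tail of the rescaled flow lies in such a neighborhood by the convergence above, so this inequality holds along $\Sigma_t$ for all sufficiently large $t$.

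Setting $a(t) = F(\Sigma_t) - F(\cC) \geq 0$ and combining the two facts gives the differential inequality
$$
-a'(t) \geq c\, a(t)^{2(1-\theta)}.
$$
Since $2(1-\theta) > 1$, integrating yields the polynomial decay $a(t) \leq C\,(1+t)^{-\mu}$ with $\mu = 1/(1-2\theta) > 2$. Because $a$ is decreasing,
$$
\delta_j^2 = a(j-1) - a(j+2) \leq a(j-1) \leq C'\, j^{-\mu},
$$
so $\delta_j \leq C''\, j^{-\mu/2}$. Choosing any $\bar\beta \in (2/\mu,\,1)$—an interval that is nonempty precisely because $\mu > 2$—the series $\sum_j \delta_j^{\bar\beta}$ is dominated by $\sum_j j^{-\bar\beta \mu /2}$, which converges since $\bar\beta\mu/2 > 1$.

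The main obstacle is justifying a Lojasiewicz exponent $\theta > 1/4$ at the cylindrical shrinker. Cylinders carry a large kernel for the linearized rescaled MCF operator—translations along the axis of $\cC$, rotations of the axis, and, most subtly, non-integrable rotational modes—which prevents the naive optimal exponent $\theta = 1/2$ from holding. Establishing a quantitative Lojasiewicz inequality with a sufficiently favorable exponent requires expanding $F$ past the quadratic level along the non-integrable directions in the kernel; this is exactly the delicate analysis of the linearized operator's kernel alluded to in the introduction and referenced as the ``essentially sharp rate of convergence.'' A potentially lighter-weight alternative would be to work entirely discretely and prove $a_{j-1} - a_{j+2} \geq c\, a_{j+2}^{2(1-\theta)}$ directly, but this reduction still needs the same Lojasiewicz input. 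Once that inequality is in hand, the Lojasiewicz--Simon integration scheme outlined above closes the proof.
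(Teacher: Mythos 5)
Your integration scheme is internally consistent, but it hinges on an input that is strictly stronger than what is actually available: you need a Lojasiewicz exponent $\theta>1/4$ at the cylinder, equivalently a decay rate $F(\Sigma_t)-F(\cC)\leq C(1+t)^{-\mu}$ with $\mu>2$, and you do not supply it. The inequality proved in \cite{CM2} (quoted in the paper via (6.21) and Lemma 6.9 there) only gives $F(\Sigma_{j-1})-\lim_{t\to\infty}F(\Sigma_t)\leq C\,j^{-\rho}$ for \emph{some} $\rho>1$, i.e.\ $\theta>0$ but possibly very close to $0$. With only $\rho>1$, your pointwise bound $\delta_j\leq C j^{-\rho/2}$ gives $\sum\delta_j^{\bar\beta}<\infty$ only for $\bar\beta>2/\rho$, which can be close to $2$ and in particular need not admit any $\bar\beta<1$. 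Flagging the missing Lojasiewicz input as ``the main obstacle'' and then asserting that the scheme closes ``once that inequality is in hand'' leaves the proposition unproved; establishing $\theta>1/4$ at a cylinder is not in the cited literature and is not how the paper proceeds.

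The paper's proof gets by with $\rho>1$ because it uses the full strength of the \emph{tail-sum} bound $\sum_{k\geq j}\delta_k^2\leq C j^{-\rho}$, which is strictly stronger than the pointwise bound $\delta_j^2\leq Cj^{-\rho}$ that you extract from it. Summation by parts converts the tail bound into $\sum_j\delta_j^2\,j^q<\infty$ for every $q<\rho$, and then H\"older with exponents $2/\beta$ and $2/(2-\beta)$ yields $\sum\delta_j^{\beta}<\infty$ whenever one can choose $a$ with $2-\beta<2a<\rho\beta$, i.e.\ whenever $2<(1+\rho)\beta$; since $\rho>1$ this admits some $\beta<1$. To salvage your route you must either prove the stronger Lojasiewicz inequality (a genuinely hard and unresolved point) or replace the pointwise step $\delta_j^2\leq a(j-1)$ by an argument that, like the paper's, exploits the decay of the whole tail $\sum_{k\geq j}\delta_k^2$.
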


\begin{proof} 
By (6.21) and lemma $6.9$ in \cite{CM2}, there exists $\rho > 1$ and $C$ so that
\begin{align}
	\sum_{k=j}^{\infty} \delta_k^2 \leq 3 \left( F(\Sigma_{j-1}) - \lim_{t\to \infty} F(\Sigma_t) \right) \leq C \, j^{ - \rho }  \, .
\end{align}
Moreover, lemma $6.9$ in \cite{CM2} shows that this implies that $\sum \delta_j < \infty$.  

We will show next that if $0 < q < \rho $, then
\begin{align}	\label{e:claimq}
	\sum \delta_j^2 \, j^q < \infty \, .
\end{align}
 To prove this,  set $b_j = j^q$ and $a_j = \sum_{i=j}^{\infty} \delta_i^2$, then $a_j - a_{j+1} = \delta_j^2$ and 
\begin{align}
	  b_{j+1} - b_j =  (j+1)^q - j^q \leq c \, j^{q-1} \, ,
\end{align}
where $c$ depends on $q$ and we  used that $j\geq 1$.  Summation by parts  gives
\begin{align}
	\sum_{j=k}^N \delta_j^2 j^q &= \sum_{j=k}^N b_j (a_j - a_{j+1}) = b_k a_k - b_{N} a_{N+1} + 
	\sum_{j=k}^{N-1} a_{j+1} ( b_{j+1}- b_j ) \notag \\
	&\leq k^q \, \sum_{j=k}^{\infty} \delta_j^2  + C\, \sum_{j=k}^{\infty} j^{- \rho} \, j^{q-1} \, .
\end{align}
This is bounded independently of $N$ since $q < \rho $, giving \eqr{e:claimq}.

Suppose that $a > 0$.  The H\"older inequality gives    $\sum \delta_j^{\beta} = \sum \left( \delta_j^{\beta} \, j^a \right) \, j^{-a} < \infty$ if
\begin{align}	\label{e:twosums}
	\sum \delta_j^2 \, j^{ \frac{ 2a}{\beta} }  + \sum j^{ - \frac{2a}{2-\beta}} < \infty \, .
\end{align}
To get \eqr{e:djsums1}, we need $\beta < 1$ and $a$ so that both sums in \eqr{e:twosums} are finite.  By \eqr{e:claimq}, the first is finite if $\frac{2a}{\beta}< \rho$.  The second is finite if
$  2 -  \beta < 2a$.  To satisfy both, we must have
\begin{align}
	2 -  \beta  < 2a < \rho \,  \beta  \, .
\end{align}
This is possible as long as $2<(1+ \rho) \, \beta$.  Since $1< \rho$, we can choose such a $\beta < 1$.
 \end{proof}

\subsection{Cylindrical approximation}

The rescaled MCF $\Sigma_t$ converges to a limiting cylinder $\cC$ by \cite{CM2}.  Thus, for each large  integer $j$, $\Sigma_j$ is well-approximated by $\cC$. 

  In  the next proposition, we will bound the distance from $\Sigma_t$ to some cylinder $\cC_t$  that is allowed to change with $t$.  We will let $\Pi_t$ denote the projection orthogonal to axis of $\cC_t$. The operator $\cL$ will be the drift Laplacian on the cylinder $\cC_t$.   Property (1) collects a priori estimates for the graph function $w$, (2) shows that $w$ almost satisfies the linearized equation, (3) 
  shows the approximating cylinders converge, and (4) gives   a priori bounds on higher derivatives. We will only use (3) in this section; (1), (2) and (4) will be used later.

\begin{Pro}	\label{p:evolving}
Given $0 < \epsilon_1$ and $ \beta < 1$, there exist  a constant $C $  and  a sequence of radii $R_j$ and cylinders $\cC_j$ satisfying:
\begin{enumerate}
\item For $t \in [j,j+1]$, $\Sigma_t$ is a graph over $B_{R_j} \cap \cC_{j+1}$ of  a function $w$ with  $  \| w \|_{C^4} \leq \frac{\epsilon_1}{R_j}$, 
\begin{align}
	 \| w \|_{ W^{3,2}}^2 + \| \phi \|_{W^{3,2}(B_{R_j})}+ \e^{ - \frac{R_j^2}{4} } &\leq C  \, \delta_j^{\beta} \, , \notag \\
	 w^2 + |\nabla w|^2 + |\Hess_w|^2 + |\nabla \Hess_w|^2 + |\nabla^2 \Hess_w|^2 &\leq C  \, \delta_j^{\beta} \,  \e^{ \frac{|x|^2}{4}} \, , \notag \\
	 \phi^2 + |\nabla \phi|^2 + |\Hess_{\phi}|^2 &\leq C  \, \delta_j^{2\beta} \, \e^{ \frac{|x|^2}{4}} \, . \notag
\end{align}
\item  
The function $w$ and its Euclidean partial derivatives $w_i$ and $w_{ij}$ on $\cC_t$   satisfy
\begin{align*}	 
	\left| \phi - \left( \cL + 1 \right) w \right| &\leq C  (1+R_j)(w^2 + |\nabla w|^2) + C  (|w| + |\nabla w|) \, \left| \Hess_w \right| \, , \\
		\left| \phi_i - \left( \cL + \frac{1}{2} \right) w_i \right| &\leq    C(1+R_j) \,  (|w| + |\nabla w| + \left| \Hess_w \right| +  |\nabla \Hess_w|) \, \left( |w| + |\nabla w|+ |\nabla w_i|   \right)  \, , \\
	\left| \phi_{ij}   - \cL \, w_{ij} \right| & \leq C (1+R_j)\, \left( |w| + |\nabla w| + |\Hess_w| + |\nabla \Hess_w|  + |\nabla^2 \Hess_w| \right)^2 \, .
\end{align*}
\item $\left| \Pi_j - \Pi_{j+1} \right| \leq C  \,\delta_j^{\beta}$. 
\item Given any $\ell$, there exists $C_{\ell}$ with $|\nabla^{\ell} w| + |\nabla^{\ell} \phi |  \leq C_{\ell}$.
\end{enumerate}

\end{Pro}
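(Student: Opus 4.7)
My plan is to combine the smooth convergence $\Sigma_t \to \cC$ established in \cite{CM2} with the $L^2$-in-time control on $\phi$ coming from the gradient-flow identity for $F$, and to use the summability from Proposition \ref{c:djsums1} to turn $\delta_j$ into a quantitative pointwise decay rate. The central identity is that rescaled MCF is the negative gradient flow of $F$, so
\begin{equation*}
\delta_j^2 \;=\; F(\Sigma_{j-1}) - F(\Sigma_{j+2}) \;=\; \int_{j-1}^{j+2} \|\phi\|_{L^2(\Sigma_t)}^2 \, dt .
\end{equation*}
This single integrated inequality, combined with a careful choice of $R_j$ and a best-approximating cylinder $\cC_{j+1}$, will drive the whole proof.

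For (1), I would set $R_j$ by $e^{-R_j^2/4} = \delta_j^\beta$, so $R_j \sim \sqrt{\log(1/\delta_j)}$. Using the smooth convergence of $\Sigma_t$ from \cite{CM2}, for all sufficiently large $j$ one may choose a cylinder $\cC_{j+1}$ (axis direction, axis offset, and radius) minimizing the Gaussian $L^2$ distance to $\Sigma_{j+1}$ on $B_{R_j}$; then $\Sigma_t$ is automatically a small $C^4$ graph $w$ over $B_{R_j}\cap \cC_{j+1}$ for every $t \in [j,j+1]$. From the identity above, a pigeonhole in time produces $t_0 \in [j-1,j]$ with $\|\phi\|_{L^2(\Sigma_{t_0})}^2 \leq C\delta_j^2$, and the parabolic equation satisfied by $\phi$ under rescaled MCF then propagates this uniformly to $t \in [j,j+1]$. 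Upgrading this $L^2$-in-space bound to $W^{3,2}$ by interior parabolic regularity and feeding it into the relation $\phi = (\cL + 1) w + Q(w,\nabla w,\Hess_w)$ — whose leading operator is elliptic and Fredholm on the Gaussian-weighted spaces after the finite-dimensional rigid-motion kernel has been killed by the choice of $\cC_{j+1}$ — gives the $W^{3,2}$ bound on $w$. The pointwise bounds carrying the $e^{|x|^2/4}$ factor follow by converting Gaussian $L^2$ estimates to pointwise ones via Schauder on balls of fixed size; the slack between $\delta_j$ and the weaker $\delta_j^\beta$ with $\beta<1$ absorbs these regularity conversions. Property (4) is then a standard parabolic Schauder bootstrap applied to the rescaled MCF equation, starting from the $C^4$ smallness from (1).

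For (2), I would Taylor-expand the shrinker quantity $\phi$ of a graph $w$ over the cylinder $\cC_{j+1}$: the linearization of $H - \tfrac{1}{2}\langle x,\nn\rangle$ at the cylinder is exactly $\cL + 1$, and the remainder is quadratic in $(w,\nabla w,\Hess_w)$ with coefficients growing at most linearly in $|x| \leq R_j$ because of the $\langle x,\cdot\rangle$ drift. Differentiating this expansion in Euclidean coordinates and tracking the commutators of $\cL$ with Euclidean partial derivatives — which produce the shift from $\cL + 1$ to $\cL + \tfrac{1}{2}$ in the equation for $\phi_i$ — then yields the claimed estimates for $\phi_i$ and $\phi_{ij}$ almost mechanically, with the factors of $(1+R_j)$ coming from precisely the drift terms.

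The main obstacle in my plan is (3), the bound $|\Pi_j - \Pi_{j+1}| \leq C\delta_j^\beta$. I would characterize $\cC_{j+1}$ by the variational condition that the graph function $w$ over it be orthogonal in the Gaussian $L^2$ sense to the kernel of $\cL + 1$ on the cylinder, which consists of infinitesimal rigid motions (perpendicular translations of the axis and rotations tilting the axis). Nondegeneracy of the resulting orthogonality map in the cylinder parameters, via an implicit function theorem, reduces $|\Pi_j - \Pi_{j+1}|$ to the magnitude of the motion of $\Sigma_t$ from $t=j$ to $t=j+1$, which by Cauchy--Schwarz is at most $\int_j^{j+1}\|\phi\|_{L^2}\,dt \leq \delta_j$; the extra loss to $\delta_j^\beta$ absorbs the same regularity conversions as in (1). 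The delicate step is to establish uniform nondegeneracy of this parameter-to-distance map on the non-compact cylinder: the rigid-motion kernel elements grow linearly in the axial coordinate, and the Gaussian weight must be balanced against the choice of $R_j$ so that the constants in the implicit function argument do not degenerate as $j \to \infty$.
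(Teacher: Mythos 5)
Your overall architecture (lean on \cite{CM2}, use $\delta_j^2=\int_{j-1}^{j+2}\|\phi\|_{L^2}^2\,dt$, Taylor-expand the shrinker quantity for (2), bound the motion of $\Sigma_t$ by $C\delta_j$ for (3)) matches the paper, but there is a genuine gap in your argument for the $W^{3,2}$ bound on $w$ in (1). You propose to invert $\cL+1$ on the Gaussian-weighted space ``after the finite-dimensional rigid-motion kernel has been killed by the choice of $\cC_{j+1}$.'' The kernel of $\cL+1$ on the cylinder is \emph{not} spanned by rigid motions: by lemma $3.26$ in \cite{CM2} (quoted in \eqr{e:kernelcC}) it consists of the quadratic polynomials $x_i^2-2$, $x_ix_j$ and the axis-tilting fields $x_kh_k(\theta)$; perpendicular translations are eigenfunctions with eigenvalue $\tfrac12$, not $0$, and the quadratics do not correspond to any motion of the cylinder, so no gauge choice removes them. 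Consequently a Fredholm inversion bounding $w$ by $\phi$ is unavailable. Indeed, if it were, you would get $\|w\|\lesssim\delta_j$, making $w$ itself summable and rendering Sections 3--5 of the paper unnecessary; the whole difficulty of the paper is precisely that these non-geometric Jacobi fields prevent such a bound. The correct (and much weaker) estimate $\|w\|_{L^2}^2\leq C\,\delta_j^{\beta_1}$ for $\beta_1<1$ --- note the square on the left --- is the discrete Lojasiewicz inequality, theorem $0.24$ of \cite{CM2}, whose proof is the main content of that paper and cannot be replaced by an implicit-function or Fredholm argument. The paper's proof simply invokes it, together with lemma $5.32$ and theorem $5.3$ there, and then interpolates against the uniform higher-derivative bounds of (4) to pass from $L^2$ to $W^{3,2}$ and to the pointwise bounds with the $\e^{|x|^2/4}$ weight.

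The same misidentification of the kernel undermines your plan for (3): orthogonality of $w$ to $\ker(\cL+1)$ is not a normalization of the cylinder parameters, since most of that kernel is not geometric. The paper instead gets (3) by combining $\int_j^{j+1}\|\phi\|_{L^1}\,dt\leq C\delta_j$ (which you also have) with a \emph{Lipschitz} dependence of the cylinder on the hypersurface, arranged by modifying the construction in \cite{CM2}: there the $\RR^k$ factor is the approximate kernel of the second fundamental form $A$ at a single point of a fixed ball, and the paper replaces this by an average over all points of $B_{2\sqrt{2n}}$, which makes the choice Lipschitz in $\Sigma_j$ without touching the estimates (1), (2), (4). An $L^2$-minimizing cylinder with an implicit-function argument over the genuine rigid-motion parameters could plausibly be made to work, but you would need to verify the nondegeneracy you flag and, more importantly, you cannot obtain the estimates of (1) this way; you must route them through the Lojasiewicz machinery of \cite{CM2}.
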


   \begin{proof} 
   Let $\epsilon_0 > 0$ and $\alpha$ be fixed as in the definition of $r_{\ell}$ on page $261$ in \cite{CM2}.  We will initially find a radius $R_j'$ so that every estimate (1), (2), (3) and (4) holds except for the $C^4$ bound in (1)
   which we replace by $\| w \|_{C^{2,\alpha}} \leq \epsilon_0$.  We will then use (1) to get the $C^4$ bound on a slightly smaller $R_j < R_j'$ with the other  bounds still holding.

    As in $(5.2)$ in \cite{CM2}, define $ R_j'$ by
        $	\e^{ - \frac{(R_j')^2}{2} } = \delta_j^2$.
          Since $\Sigma_t \to \cC$, we can assume that $\Sigma_t$ is fixed close to $\cC$ on a large set.   
Theorem $5.3$ in \cite{CM2} gives  $C$ and $\mu > 0$ and a cylinder $\cC_{j+1}$ so that $B_{(1+2\mu) \, R_j'  - C} \cap \Sigma_t$, for $t \in [j,j+1]$, is a graph over $\cC_{j+1}$ of a function $w$ with $\| w \|_{C^{2,\alpha}} \leq \epsilon_0$ and, moreover, (4) 
         holds.           Furthermore, lemma $5.32$ in \cite{CM2} gives  $C$ so that 
         \begin{align}	\label{e:from532}
         		\int_{B_{(1+\mu)R_j'} \cap \Sigma_t} |\phi|^2 \, \e^{ -\frac{|x|^2}{4}} \leq C \, \delta_j^2 \, .
         \end{align}
         Using theorem $0.24$ from \cite{CM2}, we get for any $\beta_1 < 1$ that
  \begin{align}	\label{e:theold3}
         		\| w \|_{L^2}^2 \leq  C_{\beta_0 , \beta_1} \, \delta_j^{\beta_1} \, .
        \end{align}
        Using the higher derivative bound from (4) and interpolation (e.g., lemma $B.1$ in \cite{CM2}), we get for any $\beta_2 < \beta_1$ that
 \begin{align}	\label{e:theold3w}
         		\| w \|_{W^{3,2}}^2 \leq  C_{\beta_0 , \beta_1,\beta_2} \, \delta_j^{\beta_2}  \, .
        \end{align}
        We have now established the first part of  (1).  Similarly, the second two parts of (1) follow from the first part, (4) and interpolation again.
          
      We turn next to property (2).                 Lemma $4.6$ in \cite{CM2} computes the nonlinear graph equation for shrinkers; using $p$ for points in $\cC_t$, this gives
\begin{align}	\label{e:taylorE}
	\phi = \hat{f} (w,\nabla w) + \langle p , V(w, \nabla w) \rangle + \langle \Phi (w , \nabla w) , \Hess_w \rangle \, , 
\end{align}
where $\hat{f}(s,y)$, $V(s,y)$ and $\Phi (s,y)$ are smooth functions for $|s|$ small.  Moreover, since
  $|A|^2 = \frac{1}{2}$ on $\cC_t$, the operator $\cL +1$ is the linearized operator for the shrinker equation and lemma $4.10$ in \cite{CM2} gives that
\begin{align}	\label{e:philinear}
	\left| \phi - \left( \cL + 1 \right) w \right| \leq C_1 (w^2 + |\nabla w|^2) + C_2 (|w| + |\nabla w|) \, \left| \Hess_w \right| \, , 
\end{align}
where $C_1 \leq C(1+|p|)$ and $C_2$ is bounded.    This gives the first claim in (2).  Differentiating \eqr{e:taylorE} in a Euclidean direction $x_i$ and arguing similarly gives
the second claim.  Finally, differentiating  \eqr{e:taylorE}  again gives the remaining claim in (2).

We next prove (3) by bounding the Gaussian distance from $\Sigma_j$ to $\Sigma_{j+1}$ by $C\, \delta_j$ and showing that   $\cC_j$ is Lipschitz in $\Sigma_j$.  
The first part follows since $|x_t | = |\phi|$ and 
\begin{align}	\label{e:phijj1}
	\int_j^{j+1} \| \phi \|_{L^1} \, dt  \leq C \int_j^{j+1} \| \phi \|_{L^2} \, dt \leq C \left( \int_j^{j+1} \| \phi \|_{L^2}^2 \, dt \right)^{ \frac{1}{2} } \leq C \, \delta_j \, .
\end{align}
To see that $\cC_j$ is Lipschitz in $\Sigma_j$, we need to slightly modify the proof of theorem $0.24$ in \cite{CM2}.  The choice of the cylinder in \cite{CM2} occurs on page $240$  in step $1$ of the proof of proposition $2.1$ there.  There, the $\RR^k$ factor is determined to be the approximate kernel of $A$ at any point $p$ in a fixed ball $B_{2\sqrt{2n}}$. In \cite{CM2}, $p$ is left arbitrary -- it does not effect the bounds in (1), (2) and (4) -- and the $\RR^k$ factor given by choosing any $p$ would work (all that is needed are (2.22)--(2.24) there).
To make $\cC_j$ Lipschitz in $\Sigma_{j}$, we will choose the $\RR^k$ factor by averaging over the approximate kernel of $A$ for each point in the ball $B_{2\sqrt{2n}}$.  The resulting $\RR^k$ factor, and thus the cylinder, is then Lipschitz in $\Sigma_j$ as desired.

Finally, we  will fix $R_j \leq R_j'$ where $\| w \|_{C^4} \leq \frac{\epsilon_1}{R_j}$ and we still have $\e^{ - \frac{R_j^2}{4} } \leq C \, \delta_j^{\beta}$ where $C$ now also depends on $\epsilon_1$.
This follows  from the pointwise $C^4$ bounds in (1) and  $	\e^{ - \frac{(R_j')^2}{4} } = \delta_j$.
  \end{proof}

\subsection{Reduction}
 The next theorem reduces  Theorem \ref{t:thom}  to an estimate for  rescaled MCF.

 \begin{Thm}	\label{t:rMCFA}
 Theorem \ref{t:thom}  holds if every rescaled MCF
  $\Sigma_t$  with  $\lambda (\Sigma_t) < \infty$ that goes to a cylinder   as $t \to \infty$ satisfies
\begin{align}	\label{e:rmcfa}
	\sum_{j=1}^{\infty} \,   \int_j^{j+1} \left( \sup_{B_{2n} \cap \Sigma_t} \left| \Pi_{j+1} (\nabla H) \right| \right) \, dt  < \infty \, .
\end{align}
 \end{Thm}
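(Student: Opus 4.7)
I will show that the unit tangent $\gamma_s$ to each gradient flow line of $u$ has a limit as $s \to 0^+$, where $\gamma$ is parametrized by arclength with $\gamma(0) \in \cS$.  After translating so $\gamma(0) = 0$, let $\Pi$ and $\Pi_j$ denote orthogonal projection on $\RR^{n+1}$ onto the axis-perpendicular subspace of the limit cylinder $\cC$ and of the approximants $\cC_j$ from Proposition \ref{p:evolving}, respectively.  Since Theorem \ref{l:flowlines} already gives $\Pi_{\text{axis}}(\gamma_s) \to 0$, it suffices to show that $\Pi(\gamma_s)$ converges.

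Writing $\gamma_s = -\nabla u/|\nabla u|$ (the outward unit normal to the level set $M = u^{-1}(u(\gamma(s)))$) and using the pointwise identity $\Hess_u(X,\nu) = X|\nabla u|$ for $X$ tangent to $M$, one computes $\gamma_{ss} = \nabla_\nu \nu = \nabla^M \log|\nabla u|$; the arrival time equation in the form $|\nabla u|\,H = 1$ then gives
\begin{equation*}
	\gamma_{ss} = -\frac{\nabla^M H}{H}\,.
\end{equation*}
Setting $t = -\log(-u)$ and $x(t) = e^{t/2} \gamma(s(t)) \in \Sigma_t$, the last conclusion of Theorem \ref{l:flowlines} yields $|x(t)| \leq 2n$, so $x(t) \in B_{2n} \cap \Sigma_t$.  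Combining the scalings $H_M = e^{t/2} H_\Sigma$ and $\nabla H_M = e^t \nabla H_\Sigma$ (as ambient vectors) with $|ds| = e^{-t/2} H_\Sigma\,|dt|$ yields, for any linear projection $P$ on $\RR^{n+1}$,
\begin{equation*}
	|P(\gamma_{ss})|\,|ds| = |P(\nabla H_\Sigma)(x(t))|\,|dt|\,.
\end{equation*}

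The crux is a telescoping argument that avoids comparing directly with the limit projection $\Pi$.  Let $s_j = s(t=j)$ and define $E_j := \Pi_{j+1}\gamma_s(s_j)$; splitting
\begin{equation*}
	E_{j+1} - E_j = (\Pi_{j+2} - \Pi_{j+1})\gamma_s(s_{j+1}) + \Pi_{j+1}\bigl[\gamma_s(s_{j+1}) - \gamma_s(s_j)\bigr]\,,
\end{equation*}
Proposition \ref{p:evolving}(3) bounds the first term by $C\delta_{j+1}^{\beta}$, while the change of variables above bounds the second by $\int_j^{j+1} \sup_{B_{2n}\cap\Sigma_t}|\Pi_{j+1}(\nabla H)|\,dt$.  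Summing, Proposition \ref{c:djsums1} (with $\beta$ chosen sufficiently close to $1$) gives $\sum \delta_j^{\beta} < \infty$, and hypothesis \eqref{e:rmcfa} gives summability of the second series; thus $\{E_j\}$ is Cauchy and converges to some $E_\infty$.  Since $|\Pi_{j+1} - \Pi| \to 0$ and $|\gamma_s| = 1$, also $\Pi\gamma_s(s_j) \to E_\infty$; and for $s \in (s_{j+1}, s_j)$,
\begin{equation*}
	|\Pi\gamma_s(s) - \Pi\gamma_s(s_j)| \leq \int_j^{j+1} \sup_{B_{2n}\cap\Sigma_t}|\nabla H|\,dt \longrightarrow 0 \quad \text{as } j \to \infty,
\end{equation*}
because $\Sigma_t \to \cC$ smoothly on compact sets (Proposition \ref{p:evolving}) and $\nabla H \equiv 0$ on $\cC$.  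Together with $\Pi_{\text{axis}}(\gamma_s) \to 0$, this yields $\gamma_s \to E_\infty$, proving Theorem \ref{t:thom}.

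\textbf{The main obstacle} is designing the telescoping so that only the first-order summability $\sum \delta_j^{\beta} < \infty$ is required.  A naive replacement of $\Pi$ by $\Pi_{j+1}$ inside $\int|\Pi(\nabla H)|\,dt$ incurs an error of the form $|\Pi - \Pi_{j+1}|\cdot \int|\nabla H|\,dt$; summing the tail bound $|\Pi - \Pi_{j+1}| \leq C\sum_{k \geq j+1}\delta_k^{\beta}$ over $j$ would demand $\sum_j j\,\delta_j^{\beta} < \infty$, a stronger statement than Proposition \ref{c:djsums1} supplies when the rate $\rho$ is close to $1$.  Defining $E_j$ using $\Pi_{j+1}$ itself and comparing only consecutive projections $\Pi_{j+1}\leftrightarrow\Pi_{j+2}$, each controlled by $\delta_{j+1}^{\beta}$, sidesteps this difficulty.
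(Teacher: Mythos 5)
Your proposal is correct and follows essentially the same route as the paper: the identity $\gamma_{ss}=\nabla^{T}\log|\nabla u|$, the change of variables converting $|P(\gamma_{ss})|\,ds$ into $|P(\nabla H_{\Sigma})|\,dt$ on $B_{2n}\cap\Sigma_t$, the axis component handled by Theorem \ref{l:flowlines}, and the telescoping with the moving projections $\Pi_{j+1}$ controlled by Proposition \ref{p:evolving}(3) together with Proposition \ref{c:djsums1} — which is precisely the paper's decomposition of $\sum_j|\Pi_j(\gamma_{s_j})-\Pi_{j+1}(\gamma_{s_{j+1}})|$. The only cosmetic difference is that you control the oscillation of $\gamma_s$ between consecutive times $s_{j+1},s_j$ via the decay of $\sup_{B_{2n}\cap\Sigma_t}|\nabla H|$, whereas the paper uses its Lemma \ref{l:seq}; both are valid.
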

 
 \vskip1mm
 We will prove  Theorem \ref{t:rMCFA} here and  \eqr{e:rmcfa}  in Section \ref{s:s4}.
  Suppose, therefore, that the function $u$ and reparameterized gradient flow line $\gamma(s)$ are as in Section \ref{s:s1}.
In particular,  $\gamma (s)$ is defined  on  $[0,\ell]$ with $|\gamma_s| = 1$ and $\gamma (0)   \in \cS$.   We will show that $\gamma_s$ has a limit as $s \to 0$.
 The derivative of $\gamma_s = - \frac{\nabla u}{|\nabla u|}$ is  
\begin{align}	\label{e:D3a}
 	\gamma_{ss} &=  - \frac{1}{|\nabla u|} \, \left(  \Hess_u ( \gamma_s) -  \gamma_s \, \langle  \Hess_u ( \gamma_s) , \gamma_s  \rangle	\right)= - \frac{\left( \Hess_u (\gamma_s) \right)^T }{|\nabla u|}=
	\nabla^T \log |\nabla u|   \, ,
 \end{align}
 where $\left( \cdot \right)^T$ is the tangential projection onto the level set of $u$.  
 
 \vskip2mm
 The simplest way to prove that $\lim \gamma_s$ exists would be to show that $\int |\gamma_{ss}| < \infty$, which is related to the rate of convergence for an associated
  rescaled MCF.  While this rate   fails to give integrability of $|\gamma_{ss}|$, it does give the following:
  
  \begin{Lem}	\label{l:seq}
  Given any $\Lambda > 1$, we have
$
  	\lim_{s \to 0} \,  \int_{s}^{\Lambda \, s} |\gamma_{ss} | \, ds  = 0$.
	  \end{Lem}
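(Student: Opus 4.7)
The plan is to change variables from arclength $s$ to the rescaled MCF time $\tau = -\log(-u(\gamma(s)))$, translating $|\gamma_{ss}|$ into an intrinsic quantity on $\Sigma_\tau$, and then use that $\Sigma_\tau$ converges smoothly to a cylinder on compact sets. The crucial feature is that $s \in [s_0, \Lambda s_0]$ maps to a $\tau$-interval of bounded length tending to $2\log\Lambda$, so uniform decay of the integrand on $\Sigma_\tau \cap B_{2n}$ as $\tau \to \infty$ will suffice.

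First, I would reinterpret $|\gamma_{ss}|$ as a quantity on $\Sigma_\tau$. By \eqref{e:D3a}, $\gamma_{ss} = \nabla^T \log|\nabla u|$, where $\nabla^T$ is tangential projection onto the level set. The arrival time equation \eqref{e:arrivalu} gives $|\nabla u| = 1/H_{\text{lev}}$, with $H_{\text{lev}}$ the mean curvature of the level set. Under the rescaling $y = x/\sqrt{-u}$ that sends the level set $\{u = -e^{-\tau}\}$ to $\Sigma_\tau$, mean curvatures transform as $H(y) = \sqrt{-u}\,H_{\text{lev}}(x)$; a pullback computation (using that $\log(-u)$ is constant on the level set, and that the scaling multiplies tangent vectors by $1/\sqrt{-u}$) yields
\begin{align*}
|\gamma_{ss}|(x) = \bigl|\nabla^T \log|\nabla u|\bigr|(x) = \frac{1}{\sqrt{-u}}\,\bigl|\nabla^{\Sigma_\tau}\log H\bigr|(y).
\end{align*}

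Next, I would change variables to $\tau$. Since $u(\gamma(s)) = -e^{-\tau(s)}$ and $\gamma_s = -\nabla u/|\nabla u|$, one has $|d\tau/ds| = |\nabla u|/(-u)$; combined with Theorem \ref{l:gL} this gives the Jacobian relation $ds/\sqrt{-u} = \sqrt{(n-k)/2}\,(1+o(1))\,d\tau$ in magnitude. The $\tau$-interval corresponding to $s \in [s_0, \Lambda s_0]$ has length tending to $2\log\Lambda$ as $s_0 \to 0$. Substituting into the formula above,
\begin{align*}
\int_{s_0}^{\Lambda s_0}|\gamma_{ss}|\,ds = \sqrt{\tfrac{n-k}{2}}\,(1+o(1))\int_{\tau_1}^{\tau_2}\bigl|\nabla^{\Sigma_\tau}\log H\bigr|(\tilde\gamma(\tau))\,d\tau,
\end{align*}
where $\tilde\gamma(\tau) := \gamma(s(\tau))/\sqrt{-u(\gamma(s(\tau)))} \in \Sigma_\tau$. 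After translating coordinates so that $\gamma(0)$ is the origin (the center of the rescaled MCF), the last claim of Theorem \ref{l:flowlines} gives $|\tilde\gamma(\tau)| \leq 2n$, hence $\tilde\gamma(\tau) \in B_{2n}\cap \Sigma_\tau$.

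Finally, by Proposition \ref{p:evolving}, for $\tau \in [j,j+1]$ with $j$ large, $\Sigma_\tau$ is a smooth graph over the cylinder $\cC_{j+1}$ on $B_{R_j} \supset B_{2n}$ with graph function $w$ satisfying $|w| + |\nabla w| + |\Hess_w| + |\nabla \Hess_w| \to 0$ uniformly on $B_{2n}$. Since $H$ and $|\nabla^{\Sigma_\tau} H|$ are smooth functions of $(w, \nabla w, \Hess_w, \nabla \Hess_w)$ equal respectively to the positive constant $\sqrt{(n-k)/2}$ and to $0$ when $w \equiv 0$, it follows that $\sup_{B_{2n}\cap \Sigma_\tau}|\nabla^{\Sigma_\tau}\log H| \to 0$ as $\tau \to \infty$. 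Combining with the bounded $\tau$-interval yields $\int_{s_0}^{\Lambda s_0}|\gamma_{ss}|\,ds \to 0$, proving the lemma. The main technical point is the scaling identification in the first step; the smooth convergence $\Sigma_\tau \to \cC$ needed at the end follows directly from Proposition \ref{p:evolving} and does not require the sharper summability of Proposition \ref{c:djsums1} (which underlies the stronger integrability statements leading to Theorem \ref{t:thom}).
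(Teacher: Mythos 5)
Your proof is correct, but it takes a genuinely different route from the paper's. The paper's argument is a two-line computation entirely within the framework of Section 1: from \eqref{e:D3a}, $\gamma_{ss}=-(\Hess_u(\gamma_s))^T/|\nabla u|$; since $\Hess_u\to-\tfrac{1}{n-k}\Pi$ continuously on $\cS$, since $\Pi_{\text{axis}}(\gamma_s)\to 0$ by \eqref{e:flow3}, and since the tangential projection annihilates the normal direction $\gamma_s$, the numerator is $o(1)$, while $|\nabla u|\approx s/(n-k)$ by \eqref{e:flow2}; hence $s\,|\gamma_{ss}|\to 0$ and the lemma follows from $\int_s^{\Lambda s}\sigma^{-1}\,d\sigma=\log\Lambda$. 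You instead transplant the integrand onto the rescaled flow via the scaling identity $|\gamma_{ss}|=(-u)^{-1/2}\,|\nabla^{\Sigma_\tau}\log H|$ (which is exactly \eqref{e:barH}, used by the paper only later in the proof of Theorem \ref{t:rMCFA}) and deduce the pointwise decay from the $C^3$ graphical convergence $\Sigma_\tau\to\cC$ on $B_{2n}$ supplied by Proposition \ref{p:evolving}. Both arguments share the same skeleton --- pointwise decay of $s\,|\gamma_{ss}|$ against the logarithmic measure of $[s,\Lambda s]$ --- and your steps all check out (the Jacobian $ds/\sqrt{-u}\approx\sqrt{(n-k)/2}\,d\tau$, the containment $\tilde\gamma(\tau)\in B_{2n}\cap\Sigma_\tau$ from Theorem \ref{l:flowlines}, and the fact that only $\delta_j\to 0$, not Proposition \ref{c:djsums1}, is needed). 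The trade-off is that the paper's version uses nothing beyond the $C^2$ continuity of $\Hess_u$ and properties ($\cS 1$)--($\cS 2$), so the lemma stays elementary and independent of the rescaled-MCF machinery, whereas your version front-loads the cylindrical graph estimates of \cite{CM2}; on the other hand, your reformulation makes transparent why the lemma is the borderline non-summable case that the rest of the paper must work around.
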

  
  \begin{proof}
 Using Theorem \ref{l:flowlines} and the fact that $\Hess_u \to - \frac{1}{n-k} \, \Pi$, 
  \eqr{e:D3a} implies that $  s\, |\gamma_{ss} |   \to 0$.  The lemma follows immediately from this.
  \end{proof}
   
  To get around the lack of integrability, we will decompose $\gamma_s$ into two pieces - the parts tangent and orthogonal to the axis - and deal with these separately.  The tangent part goes to zero by
  \eqr{e:flow3} in Theorem \ref{l:flowlines}.  We will use  \eqr{e:rmcfa} to  control  the orthogonal part.

   \begin{proof}[Proof of Theorem \ref{t:rMCFA}]        
Translate so that $\gamma (0) = 0$ and
let $\bar{H}= \frac{1}{|\nabla u|}$ be the mean curvature of the level set of $u$.
The mean curvature $H$  of   $\Sigma_t$ at time $t = - \log (-u)$ is given by
 \begin{align}		\label{e:barH}
 	\bar{\nabla}  \log \bar{H}  = \frac{  {\nabla} \log  {H}  }{ \sqrt{ - u  }}   \approx \frac{ \sqrt{2(n-k)}}{s} \, \ {\nabla} \log {H}   \, .
 \end{align}
    Note that  $u(\gamma (s))$ is decreasing and Theorem \ref{l:flowlines} gives   $t(s) \approx - 2 \, \log s + \log (2(n-k))$ and 
\begin{align}	  	\label{e:tofs2}
 	t'(s) &=- \partial_s \, \left( \log (-u(\gamma(s)) \right) = \frac{ -\partial_s u(\gamma(s))}{ u(\gamma(s)) } \approx - \frac{2}{s} \, .
 \end{align}
 Given a positive integer $j$, define $s_j$ so that $t(s_j) = j$.  Note that  $\left| \log \frac{s_{j+1}}{s_j} \right|$ is uniformly bounded.  
 Therefore, by Lemma \ref{l:seq}, it suffices to show that $\gamma_{s_j}$ has a limit.  
 
 We can write $\gamma_{s_j} =\Pi_{\text{axis},j} (\gamma_{s_j}) + \Pi_j (\gamma_{s_j})$.  We have $\Pi_{\text{axis},j} (\gamma_{s_j})  \to 0$ since
 $\Pi_{\text{axis},j} \to \Pi_{\text{axis}}$ and $\Pi_{\text{axis}} (\gamma_s) \to 0$.  Thus,  we need that
 $\lim_{j \to \infty} \Pi_j (\gamma_{s_j}) $ exists; this will follow   from
 \begin{align}	\label{e:toprovej}
 	\sum_j \, \left| \Pi_j (\gamma_{s_j}) -  \Pi_{j+1} (\gamma_{s_{j+1}}) \right| < \infty \, .
 \end{align}

 Theorem \ref{l:flowlines} gives (for $s$ small) that $\gamma (s) \subset B_{ 2n \, \sqrt{ -u(\gamma(s))}}  $ and, thus,
  \eqr{e:D3a} gives
\begin{align}	 
	\left| \Pi_{j+1} (\gamma_{s_j}) -  \Pi_{j+1} (\gamma_{s_{j+1}}) \right|  &\leq \int_{s_{j+1}}^{s_j} \left| \Pi_{j+1} (\gamma_{ss} ) \right| \, ds = \int_{s_{j+1}}^{s_j}  \left| \Pi_{j+1}\left( \bar{\nabla}  \log \bar{H} (\gamma(s)) \right) \right| \, ds  \notag \\
	& \leq C \, \int_{s_{j+1}}^{s_j}  \sup_{B_{ 2n \, \sqrt{-u(\gamma(s))}}}\, \left| \Pi_{j+1} (\nabla \log  \bar{H}) \right| ( \cdot ,-u) \, ds  \, .	\label{e:aaaa}
\end{align} 
Using \eqr{e:barH} and \eqr{e:tofs2} in \eqr{e:aaaa} and then applying Theorem \ref{t:rMCFA} gives
\begin{align}	 
	\sum_j \, \left| \Pi_{j+1} (\gamma_{s_j}) -  \Pi_{j+1} (\gamma_{s_{j+1}}) \right| &   
	 \leq C \, \sum_j \,  \int_j^{j+1} \sup_{B_{ 2n} \cap \Sigma_t } \, \left| \Pi_{j+1} (\nabla   {H}) \right|  \, dt < \infty \, .
\end{align} 
  On the other hand, $\sum_j \left| \Pi_{j} (\gamma_{s_j}) -  \Pi_{j+1} (\gamma_{s_{j}}) \right|   < \infty$ by (3) in Proposition \ref{p:evolving} and Proposition \ref{c:djsums1}.
 	  Therefore, the triangle inequality gives \eqr{e:toprovej}, completing
  the proof.
               \end{proof}

\section{Approximate eigenfunctions on cylinders}

The key remaining point is summability of $\Pi_{j+1} (\nabla H)$.
  The bound for $w^2$ in (1) from Proposition \ref{p:evolving} is  summable by Proposition \ref{c:djsums1}, but the bound for $w$   is not.     In particular, (1) gives a bound for $\nabla H$ that is not summable.   
This bound for $\nabla H$  cannot be improved  due to slowly growing   Jacobi fields.  However,  these Jacobi fields do not contribute to  $\Pi_{j+1}(\nabla H)$.  We will show that the remainder of $w$, after we subtract these Jacobi fields, is small.

In this section, we will show that if an approximate eigenfunction $w$ on  a cylinder $\cC$  begins to grow, then it must grow  rapidly.  The key tool is the frequency function for the drift Laplacian
as in \cite{CM7}; the  difficulty here is handling  error terms.
 Let $x \in \RR^{k}$ be coordinates on the Euclidean factor, $f = \frac{|x|^2}{4}$,     $\cL$  the drift Laplacian 
 $
  	\cL = \Delta_{\cC} - \frac{1}{2} \, \nabla_x = \Delta_{\theta} + \cL_{\RR^k}  
$, where $\Delta_{\theta}$ is the Laplacian on  $\SS^{n-k}_{\sqrt{2(n-k)}} $, and $\dv_f = \dv - \langle \frac{x}{2} , \cdot \rangle$ the drift divergence.

In applications,  $w$ will be given by Proposition \ref{p:evolving} and, thus, will satisfy (1), (2) and (4) there.  Thus, 
we will assume that  $w$ is a function on $\{ |x| < R \} \subset \cC$ satisfying:
\begin{align}	\label{e:oldONE}
	\left|   ( \cL +1)w - \phi \right| &\leq  \epsilon\, \left( |w| + |\nabla w| \right)   {\text{ where $\phi$ is a function and  $\frac{8}{9} < (1-3\epsilon)^3$}} \, , \\
	\sup_{|x| < 4n} \left|   ( \cL +1)w   \right| & \leq \mu \, . \label{e:new3p1}
\end{align}
Equation \eqr{e:oldONE} arises from  $w$ satisfying a  nonlinear equation $\cM w = \phi$ and  $\cL + 1$ is the linearization of $\cM$. 
We will also assume that $\mu > 0$ is small and
 \begin{align}	\label{e:w11}
	\| w \|_{W^{3,2}}^2 + \| \phi \|_{W^{3,2}} + \e^{ - \frac{R^2}{4} } &\leq \mu \, , \\
	w^2 (x) + |\nabla w(x)|^2 + \left| \Hess_w (x) \right|^2 &\leq \mu \, \e^{ \frac{|x|^2}{4}} \, .	\label{e:w22}
\end{align}
We will assume that the Euclidean first derivatives $w_i$ and second derivatives $w_{ij}$ satisfy
\begin{align}	
	\left| \left( \cL + \frac{1}{2} \right)    \, w_{i} \right| &\leq |\phi_i| + \epsilon \, \left( |w| + |\nabla w| + |\nabla w_i| \right)   \, , \label{e:thenewguy} \\
	\sup_{|x| < r} \, \left| \cL   \, w_{ij} \right| &\leq C_r \,  \mu  {\text{ where $C_r$ depends on $r$}}  \, . \label{e:wijbounds}
\end{align}

By lemma $3.26$ in \cite{CM2}, the kernel of  $\cL +1$ on the weighted Gaussian space on $\cC$ consists of quadratic polynomials and ``infinitesimal rotations'' of the form
\begin{align}	\label{e:kernelcC}
	\tilde{w} = \sum_i a_i (x_i^2 - 2) + \sum_{i<j} a_{ij} x_i x_j + \sum_k   x_k h_k (\theta) \, , 
\end{align}
where $a_i , a_{ij}  $ are constants and each $h_k $ is a $\Delta_{\theta}$-eigenfunction with eigenvalue $\frac{1}{2}$.

The next theorem  quadratically  approximates $w$ in $|x| \leq 3n$ by   $\tilde{w}$ as in \eqr{e:kernelcC}.  Namely, while \eqr{e:w22} gives $|w  | \leq C \, \mu^{ \frac{1}{2} }$,   the next theorem  gives $|w - \tilde{w}| \leq C \, \mu^{ \nu }$ with $\nu \approx 1$.

\begin{Thm}	\label{t:goalCM7}
Given $\nu < 1$, there exists $C$, $\bar{\epsilon}$, and $\mu_0 > 0$ so that if $w$ satisfies \eqr{e:oldONE}--\eqr{e:wijbounds} with  $\mu_0  >  \mu$ and $\bar{\epsilon} > \epsilon$, then there is a function $\tilde{w}$ as in \eqr{e:kernelcC} with
\begin{align}	\label{e:tgoal}
	\sup_{|x| \leq 3n} \, \left| w - \tilde{w} \right| \leq C \,\mu^{ \nu }  \, .
\end{align}
\end{Thm}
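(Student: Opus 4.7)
I would follow the frequency-monotonicity approach of \cite{CM7}, adapted to the drift Laplacian operator $\cL + 1$ and to the inhomogeneous equation $(\cL+1)w \approx \phi$. The argument has four steps.

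First, project $w$ in Gaussian $L^2(\e^{-f})$ onto $\ker(\cL + 1)$; by lemma $3.26$ in \cite{CM2}, this kernel is the finite-dimensional space whose elements have the form \eqref{e:kernelcC}, so the projection $\tilde w$ has the required form. Writing $v = w - \tilde w$, the hypothesis \eqref{e:oldONE} gives $(\cL + 1)v = \phi + E$ with $|E| \le \epsilon(|w| + |\nabla w|)$, and $v$ is orthogonal to $\ker(\cL + 1)$. The naive estimate via the spectral gap of $\cL + 1$ on the complement of its kernel, combined with \eqref{e:w11}, yields only $\|v\|_{L^2(\e^{-f})} \le C \mu^{1/2}$; Moser iteration then upgrades this to $\sup_{|x|\le 3n}|v| \le C\mu^{1/2}$, using \eqref{e:w22}, \eqref{e:thenewguy} and \eqref{e:wijbounds} to control higher derivatives. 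This already gives \eqref{e:tgoal} at $\nu = 1/2$, but falls well short of $\nu$ close to $1$.

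The improvement comes from a frequency monotonicity. I would define a frequency function $U(r)$ for $v$ on cylindrical balls of radius $r$, weighted by $\e^{-f}$, analogous to the frequency in \cite{CM7} but shifted by $1$ to match $\cL + 1$. For an exact solution $(\cL+1)v = 0$ orthogonal to the kernel, $U$ is monotone in $r$ and bounded below by the smallest spectral level strictly above the kernel, which corresponds to frequency strictly greater than $2$ in the polynomial calculus. This monotonicity translates into polynomial decay of the form $\int_{B_r} v^2 \e^{-f} \lesssim (r/R)^{2N} \int_{B_R} v^2 \e^{-f}$ with $N > 2$, and combined with the boundary radius $R$ determined by $\e^{-R^2/4} \le \mu$, the effective decay of $v$ at scale $3n$ becomes a positive power of $\mu$.

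The crucial step, and the main obstacle, is establishing the approximate monotonicity of $U(r)$ in the presence of the inhomogeneity $\phi + E$. Computing $\tfrac{d}{dr} U$ produces error terms pairing $\phi$ or $E$ against $v$ and $\nabla v$; each such term is treated by Cauchy--Schwarz, each application yielding a factor of $1 - 3\epsilon$ after absorption into the leading monotone quantity. Iterating three times produces the coefficient $(1 - 3\epsilon)^3$, and the assumption $(1 - 3\epsilon)^3 > 8/9$ is exactly the threshold at which the surviving portion of the monotone term dominates the accumulated error; the $8/9$ leaves a definite positive margin for iteration. The feedback between $E$ and $v$ (since $E$ depends on all of $w = \tilde w + v$, not just $v$) is what forces this specific numerical threshold.

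Finally, iterating the resulting decay estimate across a dyadic sequence of scales from $R$ down to $3n$, and summing the geometric contributions, yields $\|v\|_{L^2(\e^{-f})(B_{4n})} \le C \mu^\nu$ for any preassigned $\nu < 1$, with the constant $C$ depending on $\nu$ and on the margin $(1 - 3\epsilon)^3 - 8/9$. Converting this weighted $L^2$ bound to the sup norm on $|x| \le 3n$ is then standard: the Euclidean second-derivative bounds \eqref{e:thenewguy}, \eqref{e:wijbounds}, together with the pointwise derivative bounds in \eqref{e:w22}, permit a Moser iteration to produce a pointwise bound for $v$ on $\{|x| \le 3n\}$ from the weighted $L^2$ bound on a slightly larger ball, giving \eqref{e:tgoal}.
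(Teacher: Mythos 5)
Your overall strategy---subtract a kernel element of $\cL+1$ and run a frequency argument for the drift Laplacian in the spirit of \cite{CM7}---is the right family of ideas, and your reduction of the sup bound to a weighted $L^2$ bound via interpolation/elliptic estimates matches the paper's Corollary \ref{c:bartime}. But the quantitative mechanism you propose in the middle step is not strong enough to prove the theorem. You claim that a frequency lower bound at the first spectral level above the kernel yields polynomial decay $\int_{B_r}v^2\e^{-f}\lesssim (r/R)^{2N}\int_{B_R}v^2\e^{-f}$ with some fixed $N>2$, and that combining this with $\e^{-R^2/4}\le\mu$ gives a positive power of $\mu$ at scale $3n$. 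It does not: the outer radius is only $R\sim 2\sqrt{\log(1/\mu)}$, so a fixed-exponent doubling estimate produces a gain of order $(\log(1/\mu))^{-N}$, which is logarithmic in $\mu$, not $\mu^{\nu}$ for any $\nu>0$. The point of Theorem \ref{t:app} is precisely that the frequency is not merely bounded below by a constant but grows \emph{quadratically}, $U_E(r)\gtrsim r^2/2$ up to factors of $(1\pm(3\epsilon+\Lambda))$, once it crosses the threshold $r_1^2/16$; integrating $(\log I)'=2U/r$ then makes $I$ grow like $\e^{(1/2-o(1))r^2}$, and it is only this Gaussian-rate growth---beating the weight $\e^{-f}$---that converts $\e^{-R^2/2}\le\mu^2$ into the bound $\mu^{2\nu}$ for $\int_{|x|<4n}v^2\e^{-f}$ with $\nu$ arbitrarily close to $1$.

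Second, your lower bound on the frequency is unjustified. For an exact global solution in the weighted $L^2$ space, orthogonality to the kernel does control the spectral decomposition, but here $v$ only approximately solves the equation and only on $\{|x|<R\}$, so there is no a priori reason the frequency at scale $O(n)$ exceeds any threshold; indeed it need not, and in that case $v$ is simply already small. The paper handles this with a dichotomy (Proposition \ref{l:getbeta}): either $\int_{|x|<4n}|\Hx_w|^2\le\zeta\mu^2$, in which case Corollary \ref{c:bartime} finishes directly, or else one shows the frequency must cross the growth threshold at some finite scale $r_1$. Establishing the latter is the hardest part of the paper and requires working first with the Euclidean Hessian $\Hx_w$ (each $\RR^k$-derivative lowers the eigenvalue by $1/2$ and hence lowers the growth threshold, so $U_2\ge r^2/3$ can be forced), and then transferring the growth back to $w$ via Poincar\'e and reverse Poincar\'e inequalities (Lemmas \ref{l:RPo}--\ref{l:PIq}). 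Relatedly, the paper's normalization $\tilde w$ is chosen by matching derivative averages on the slice $x=0$ (conditions (A1)--(A3)), not by global weighted $L^2$ projection; those pointwise conditions are what make the average bounds of Lemma \ref{l:avgs}, and hence the Neumann--Poincar\'e steps, work. Your proposal as written is missing both the quadratic frequency growth and the threshold-crossing argument, and these are the essential content of the proof.
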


\subsection{First reduction}

\begin{Lem}		\label{l:taylorsphere}
If $w$ satisfies \eqr{e:oldONE}--\eqr{e:wijbounds}, then  there is a function $\tilde{w}$ as in \eqr{e:kernelcC} so that $v = w - \tilde{w}$
 satisfies 
\eqr{e:oldONE}--\eqr{e:wijbounds} and 
\begin{itemize}
 \item[(A1)] Each Euclidean second derivative $v_{ij}$ has $\int_{x=0} v_{ij} = 0$.
 \item[(A2)] Each Euclidean first derivative $v_i$ has $\int_{x=0} v_i \,h = 0$ for any $h$ with $\Delta_{\theta} h = - \frac{1}{2} \, h$.
 \item[(A3)] We have $\left| \int_{x=0} v \right| \leq  \mu \, \Vol ( x=0)$.
\end{itemize}
\end{Lem}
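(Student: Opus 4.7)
The plan is to construct $\tilde{w}$ explicitly as a specific element of $\ker(\cL+1)$ that exactly cancels the integrals appearing in (A1) and (A2), and then to observe that the third condition (A3) comes out automatically from the PDE, not from any additional degree of freedom.

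\smallskip

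\noindent\textbf{Step 1 (Definition of $\tilde w$).} Set
\begin{align*}
a_i &= \frac{1}{2\,\Vol(\{x=0\})}\int_{x=0} w_{ii},\qquad a_{ij} = \frac{1}{\Vol(\{x=0\})}\int_{x=0} w_{ij}\ \ (i<j),
\end{align*}
and let $h_k$ be the $L^2(\SS^{n-k}_{\sqrt{2(n-k)}})$-projection of $w_k\big|_{x=0}$ onto the $-\tfrac{1}{2}$-eigenspace of $\Delta_\theta$. Define $\tilde w$ by \eqref{e:kernelcC}. The pointwise bounds in \eqref{e:w22} at $x=0$ give $|a_i|,|a_{ij}|\leq C\,\mu^{1/2}$ and (since the target eigenspace is finite-dimensional) $\|h_k\|_{C^m}\leq C_m\,\mu^{1/2}$; these estimates will be recycled in Step~4.

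\smallskip

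\noindent\textbf{Step 2 (Verification of (A1), (A2)).} A direct computation gives $\tilde w_{ii}\equiv 2a_i$ and $\tilde w_{ij}\equiv a_{ij}$ for $i<j$; these are constants, chosen so that their sphere-integrals over $\{x=0\}$ match those of $w_{ii}$ and $w_{ij}$, so (A1) holds. For (A2), note that $\tilde w_i = 2a_ix_i+\sum_{j\neq i} a_{ij}x_j+h_i(\theta)$, whose restriction to $\{x=0\}$ is $h_i(\theta)$; by the choice of $h_i$ as a projection, $v_i\big|_{x=0}=w_i\big|_{x=0}-h_i$ is $L^2$-orthogonal to the $-\tfrac{1}{2}$-eigenspace.

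\smallskip

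\noindent\textbf{Step 3 (Verification of (A3)).} At $x=0$, only the $x_i^2-2$ summands of $\tilde w$ survive, and the $x_jx_l$ and $x_k h_k$ pieces vanish; therefore
\begin{align*}
\int_{x=0}\tilde w \,=\, -2\Bigl(\sum_i a_i\Bigr)\Vol(\{x=0\}) \,=\, -\int_{x=0}\Delta_{\RR^k} w,
\end{align*}
the last equality by the definition of $a_i$. Consequently $\int_{x=0} v=\int_{x=0}(w+\Delta_{\RR^k}w)$. Since $\int_{x=0}\Delta_\theta w=0$ (divergence theorem on the sphere factor) and $\cL_{\RR^k}$ coincides with $\Delta_{\RR^k}$ at $x=0$, the integrand equals $(\cL+1)w$ at $x=0$, so the hypothesis \eqref{e:new3p1} yields $|\int_{x=0} v|\leq \mu\,\Vol(\{x=0\})$. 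This is the heart of the lemma: the polynomial $x_i^2-2$ lies in $\ker(\cL_{\RR^k}+1)$, which is precisely why \emph{the same} constants $a_i$ that kill (A1) also give the bound in (A3) at no extra cost.

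\smallskip

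\noindent\textbf{Step 4 (Hypotheses carry to $v$).} Since $\tilde w\in\ker(\cL+1)$ we have $(\cL+1)v=(\cL+1)w$, so \eqref{e:new3p1} is immediate. A short calculation using $\Delta_\theta h_i=-\tfrac{1}{2}h_i$ shows $(\cL+\tfrac{1}{2})\tilde w_i=0$ and $\cL\tilde w_{ij}=0$, so the left-hand sides of \eqref{e:thenewguy} and \eqref{e:wijbounds} are unchanged. Writing $w=v+\tilde w$ and using $|\tilde w|+|\nabla\tilde w|+|\Hess_{\tilde w}|\leq C\mu^{1/2}(1+|x|)$ from Step~1, the pointwise inequalities \eqref{e:oldONE}, \eqref{e:w22} and the weighted-Sobolev bounds \eqref{e:w11} for $v$ follow from those for $w$ with harmless constant-factor adjustments absorbed into the constant in $\mu$.

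\smallskip

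The main obstacle, and the only nontrivial step, is (A3); the argument there is algebraic and relies on the precise form \eqref{e:kernelcC} of the kernel, and the fact that \eqref{e:new3p1} gives a pointwise $\mu$-bound on $(\cL+1)w$ on $\{|x|<4n\}\supset\{x=0\}$.
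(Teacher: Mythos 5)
Your proposal is correct and follows essentially the same route as the paper: the same choice of $a_i$, $a_{ij}$ (matching sphere averages of the second derivatives) and of $h_k$ (projection of $w_k|_{x=0}$ onto the $-\tfrac12$-eigenspace), with (A3) obtained by integrating $(\cL+1)w$ over $\{x=0\}$ and using \eqref{e:new3p1}. You merely spell out the details that the paper leaves implicit, namely the cancellation $\int_{x=0}\tilde w=-\int_{x=0}\Delta_{\RR^k}w$ behind (A3) and the verification that $v$ still satisfies \eqref{e:oldONE}--\eqref{e:wijbounds}.
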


\begin{proof}
 Given $\tilde{w}$ as in \eqr{e:kernelcC}, the Euclidean first and second derivatives are given at $x=0$ by 
 \begin{align}
 	\tilde{w}_i = h_i (\theta) , \, \tilde{w}_{ii} =2 a_i , \, \tilde{w}_{ij} = a_{ij} {\text{ for }} i < j \, .
 \end{align}
To arrange (A1), define $a_i$ and $a_{ij}$ by  
 \begin{align}
 	2a_i  \, \int_{x=0} 1 = \int_{x=0} w_{ii} {\text{ and }} a_{ij} \, \int_{x=0} 1 = \int_{x=0} w_{ij} {\text{ for }} i < j \, .
 \end{align}
 Similarly, for (A2),  let $h_i$ be the projection of $w_i$ onto the $\frac{1}{2}$-eigenspace of $\Delta_{\theta}$ at $x=0$.  Claim (A3) follows by integrating  
  $(\cL + 1)w$ at $x=0$ and using \eqr{e:new3p1} and (A1).
\end{proof}

 For a function $v$, we let $\Hx_v = \frac{ \partial^2 v}{\partial x_i \partial x_j}$ denote its Euclidean Hessian.

\begin{Cor}	\label{c:bartime}
Given $\beta > 2$, there exists  $C$ so that
if $v$ satisfies \eqr{e:oldONE}--\eqr{e:wijbounds} and (A1)--(A3), then   
\begin{align}	\label{e:CbetarAA}
	\sup_{|x| < 3n} \, \left(  |v|^{\beta} + \left|\nabla_{\RR^k} v \right|^{\beta} + \left| \Hx_ v\right|^{\beta} \right) \leq C \, \mu^2 + C \,  \int_{ |x| < 4n } \left| \Hx_v \right|^2 \, .
\end{align}
Similarly, given $\beta > 2$ and $ r > 3n$, there exists $C_{\beta , r}$ so that
\begin{align}	\label{e:Cbetar}
	\sup_{|x| < r} \, \left(  |v|^{\beta} + \left|\nabla_{\RR^k} v \right|^{\beta} + \left| \Hx_ v\right|^{\beta}  \right) \leq C_{\beta , r} \, \mu^2 + C_{\beta , r} \,  \int_{ |x| < r+1 } \left| \Hx_v \right|^2 \, .
\end{align}
\end{Cor}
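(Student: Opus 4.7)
The strategy is a three-step hierarchical bootstrap: first control the Euclidean Hessian $\Hx_v$ by an interior elliptic estimate for the drift Laplacian, then pass to $\nabla_{\RR^k}v$ and finally to $v$ itself by integrating out of the slice $\{x=0\}$ in the Euclidean direction. At each level the ``constant of integration'' at $x=0$ is controlled by restricting the appropriate approximate eigenequation to $x=0$ (where the Euclidean drift $\tfrac{x}{2}\cdot\nabla_{\RR^k}$ vanishes) and inverting on the compact sphere $\SS=\SS^{n-k}_{\sqrt{2(n-k)}}$; the normalizations (A1)--(A3) are exactly what is needed to kill the kernel directions of these restricted operators.

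For the Hessian step, \eqr{e:wijbounds} gives $|\cL v_{ij}|\leq C_r\mu$ on $\{|x|<r\}$, and since $\cL$ is uniformly elliptic on bounded subsets of $\cC$, the interior $L^\infty$--$L^2$ Moser estimate yields $\sup_{|x|<3n}|\Hx_v|^2\leq C_n\bigl(\mu^2+\int_{|x|<4n}|\Hx_v|^2\bigr)$. To upgrade to the $\beta$-th power I would interpolate against the crude pointwise bound $|\Hx_v|\leq C_n\mu^{1/2}$ from \eqr{e:w22}: writing $|\Hx_v|^\beta=|\Hx_v|^{\beta-2}\cdot|\Hx_v|^2$ and using $\beta>2$ together with $\mu<1$ gives $\mu^{(\beta-2)/2}\leq 1$ and $\mu^{(\beta-2)/2}\cdot\mu^2\leq\mu^2$, converting the $L^\infty$--$L^2$ bound into the desired form.

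For $v_i$ and $v$, the fundamental theorem of calculus in $\RR^k$ applied along Euclidean rays from $x=0$ reduces the sup on $\{|x|<3n\}$ to $\sup_\theta|v_i(0,\theta)|$ and $\sup_\theta|v(0,\theta)|$ plus the already-controlled Hessian and gradient sups. Restricting \eqr{e:thenewguy} and \eqr{e:oldONE} to $x=0$ gives
\[
(\Delta_\theta+\tfrac{1}{2})\,v_i(0,\theta)=\phi_i(0,\theta)-\sum_j v_{ijj}(0,\theta)+O\bigl(\epsilon(|v|+|\nabla v|+|\nabla v_i|)(0,\theta)\bigr),
\]
\[
(\Delta_\theta+1)\,v(0,\theta)=\phi(0,\theta)-\sum_i v_{ii}(0,\theta)+O\bigl(\epsilon(|v|+|\nabla v|)(0,\theta)\bigr).
\]
By (A2), $v_i(0,\cdot)$ is $L^2(\SS)$-orthogonal to $\ker(\Delta_\theta+\tfrac{1}{2})$, namely the first spherical harmonics; and a direct eigenvalue check on $\SS^{n-k}_{\sqrt{2(n-k)}}$ shows $\Delta_\theta+1$ has trivial kernel (the eigenvalues of $-\Delta_\theta$ are $\ell(\ell+n-k-1)/(2(n-k))$, none equal to $1$). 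Each operator therefore has a positive spectral gap on the relevant subspace, so elliptic regularity on the closed manifold $\SS$ converts $L^\infty$ bounds on the right-hand sides into $L^\infty$ bounds on $v_i(0,\cdot)$ and $v(0,\cdot)$; the estimate (A3) pins down the sphere-average mode of $v(0,\cdot)$ directly by $\mu$, stabilizing the absorption of the $\epsilon$-error in that direction. Raising to the $\beta$-th power proceeds exactly as in Step 1. The second estimate \eqr{e:Cbetar} follows identically, with the constants from \eqr{e:wijbounds} and the Moser estimate now allowed to depend on $r$.

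The chief technical obstacle is the simultaneous absorption of the $\epsilon$-perturbative terms in \eqr{e:oldONE} and \eqr{e:thenewguy} into the slice inversions, together with the one-step Schauder bootstrap on $\cL v_{ij}$ required to produce $L^\infty$ control of the third derivative $\sum_j v_{ijj}(0,\theta)$ appearing in Step 2; both are handled by taking $\bar{\epsilon}$ small enough.
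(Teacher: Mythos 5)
Your proof follows the same skeleton as the paper's: bound $\Hx_v$ in sup norm by its $L^2$ norm plus $\mu$, restrict the equations to the slice $\{x=0\}$ where the Euclidean drift vanishes, invert $\Delta_\theta+1$ (trivial kernel, by your eigenvalue check or lemma $2.5$ of \cite{CM2}) and $\Delta_\theta+\tfrac12$ (kernel killed by (A2)), and then integrate outward in the Euclidean directions. The implementation differs in one respect: you obtain the sup bounds by genuine interior elliptic estimates (Moser for $\cL v_{ij}=\psi_{ij}$ with bounded right-hand side, then a $W^{2,p}$/Schauder step for the third derivatives $v_{ijj}$), with no loss of exponent, whereas the paper interpolates the $L^2$ bound on $\Hx_v$ and the sup bound \eqr{e:new3p1} against the a priori uniform higher-derivative bounds, losing an arbitrarily small power $\delta^{\beta_1}$, $\beta_1<1$; in the paper the hypothesis $\beta>2$ is precisely what absorbs this loss, while in your version it enters only through the soft factor $|\Hx_v|^{\beta-2}\leq C$. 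Both routes work, and your coupled absorption of the $\epsilon$-errors between the $v$ and $v_i$ slice equations is fine for $\epsilon$ small. The one place to be careful is the term $\phi_i(0,\theta)$ in your restricted equation for $v_i$: the listed hypotheses give only the Gaussian bound $\| \phi \|_{W^{3,2}}\leq\mu$, which does not yield a pointwise bound $|\phi_i|\leq C\mu$ on $\{x=0\}$ in general dimension. You must either invoke the pointwise bounds on $\nabla\phi$ available in the application (part (1) of Proposition \ref{p:evolving}), or interpolate against the uniform higher-derivative bounds as the paper does --- in which case you do lose a small power after all and need the paper's bookkeeping with $\beta\,\beta_2>2$ --- or avoid $\phi_i$ altogether by writing $(\cL+\tfrac12)v_i=\partial_i\left((\cL+1)v\right)$ and interpolating \eqr{e:new3p1}.
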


\begin{proof}
We will prove \eqr{e:CbetarAA}; \eqr{e:Cbetar} follows similarly.
Set $\delta^2 = \mu^2 +    \int_{ |x| < 4n } \left| \Hx_v \right|^2$.
Since we have uniform higher derivative bounds on $v$, interpolation implies that all norms are equivalent if we  go to any worse power.  Thus, given any $\beta_1 < 1$, 
\eqr{e:new3p1} gives
\begin{align}	\label{e:e413}
	\left\| \Hx_v \right\|_{C^2}  + \left\| (\cL + 1) v \right\|_{C^2}  \leq C_1 \, \delta^{\beta_1} \, , 
\end{align}
where $C_1$ depends on $\beta_1$.  It follows that  
$
	\left| (\Delta_{\theta} +1 ) v (\theta , 0) \right| \leq C_1 \,  \delta^{\beta_1} $.
	Since $\Delta_{\theta} +1$ is invertible (lemma $2.5$ in \cite{CM2}), this (and interpolation again) gives for any $\beta_2 < \beta_1$ that
\begin{align}	\label{e:vtheta0}
	\left| v (\theta , 0) \right| \leq C_2 \,  \delta^{\beta_2} \, .
\end{align}
Given a Euclidean first derivative $v_i$, 
  \eqr{e:e413} gives that
  \begin{align}
	\left| \left( \Delta_{\theta} + \frac{1}{2} \right) v_i (\theta , 0) \right| \leq C_1 \,  \delta^{\beta_1} \, .
\end{align}
The operator $\left( \Delta_{\theta} + \frac{1}{2} \right)$ is not invertible, but 
 (A2) implies that $v_i (\theta , 0)$ is orthogonal to the kernel so we get   (using interpolation again)
that
	$\left| v_i (\theta , 0) \right| \leq C_2 \,  \delta^{\beta_2}$.
	The bound on $v_i$ at $x=0$ and the Hessian bound give a bound on $v_i$ everywhere.  Integrating this and using \eqr{e:vtheta0} gives the desired pointwise bound on $v$,
completing the proof.
\end{proof}

\subsection{The frequency}

  Given a function $u$ on $\cC$,   define $I$ and $D$ by{\footnote{When $k=1$ and the sphere is disconnected, let $r$ be signed distance and set $I(|r|) = \int_{x=r} u^2 + \int_{x=-r} u^2$.}}
  \begin{align}
  	I(r) &= r^{1-k} \int_{|x| = r} u^2 \, , \\
	D(r) &= r^{2-k} \, \int_{|x| = r} u \, u_r = \e^{ \frac{r^2}{4}} \, r^{2-k} \, \int_{|x| < r } \left(|\nabla u|^2 + u \cL \, u \right) \, \e^{-f} \, .
  \end{align}
  Here $u_r$ denotes the normal derivative of $u$ on the level set $|x|  = r$.  Note that $f$ is proper.
    It is easy to see that $I' = \frac{2D}{r}$ and $(\log I)' = \frac{2U}{r}$, where 
  the frequency  $U = \frac{D}{I}$; cf. \cite{Be}, \cite{CM7}.
   
  \vskip1mm
The next theorem shows that if the growth of an  approximate eigenfunction    hits a certain threshhold, then it  grows very rapidly.  The theorem is stated for eigenvalue $1$, but generalizes easily to other eigenvalues.   The case $(\cL +1)v=0$, where $\epsilon = \phi = 0$, follows from \cite{CM7}.

\begin{Thm}	\label{t:app}
Given $r_1 > \max \{ 9n , 4n+ 64\sqrt{2} \}$, 
there exist $\bar{R} = \bar{R}(n,r_1)$, $C = C(n,r_1)$  so that if
 $v$  is a function on $\{ |x| \leq R \}$ satisfying \eqr{e:oldONE}, where $\bar{R} \leq R   $,     and 
 \begin{align}	\label{e:holefilledA}
 	2 \, \int_{|x| < 9n } v^2 \, \e^{-f} \leq \int_{|x| < r_1} v^2 \, \e^{-f}  {\text{ and }}  \frac{r_1^2}{16} < U(r_1) \, ,
 \end{align}
then  for any $\Lambda \in (0, 1/3)$  
\begin{align}	\label{e:goalA}
	\int_{|x| < {4n}} v^2 \, \e^{-f}  \leq   \Lambda^{-2} \, \| \phi \|_{L^2}^2 + C \, I(R) \, R^{2n+68} \, \e^{ - \frac{ \left(1-3\epsilon-  \Lambda  \right)\, R^2}{2\left(1+3\epsilon+ \Lambda  \right)^2}}  \, .
\end{align}
\end{Thm}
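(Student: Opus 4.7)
The plan is to adapt the frequency function method of \cite{CM7}, which establishes the analogous statement for exact eigenfunctions satisfying $(\cL+1)v = 0$, to the present approximate setting where the error is $\phi$ plus a term of size $\epsilon(|v|+|\nabla v|)$.

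First I would record the basic identities $I'(r) = 2D(r)/r$ and $(\log I)'(r) = 2U(r)/r$ and then compute $D'(r)$ by integration by parts on spherical shells. Writing $\cL v = -v + \phi - [(\cL+1)v-\phi]$ turns the $v\,\cL v$ term appearing in the second formula for $D$ into $-v^2 + v\phi + O(\epsilon(v^2+|\nabla v|^2))$. This produces an expression for $rD'(r)$ whose dominant part is a nonnegative ``variance'' contribution plus a multiple of $D-I$, with error terms $\cE(r)$ coming from $\epsilon$ and from $\phi$. Weighted Cauchy--Schwarz separates the $\phi$-errors at a cost of $\Lambda^{-1}\|\phi\|_{L^2}^2$ plus $\Lambda$ times a piece absorbable into the main term, while the $\epsilon$-errors shift the effective coefficients by factors of the form $1\pm 3\epsilon$.

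Combining these yields a differential inequality for the frequency $U = D/I$, schematically
\begin{align*}
(\log I)'(r) \;=\; \frac{2U(r)}{r} \;\geq\; \frac{r}{1+3\epsilon+\Lambda} \;-\; (\text{small error}),
\end{align*}
valid whenever $U(r) \geq r^2/16$. The hypothesis $U(r_1)>r_1^2/16$, together with a continuity/bootstrap argument, would ensure that this regime persists throughout $[r_1,R]$: once $U$ exceeds the threshold with any margin, the inequality above dominates the error contributions to $U'$ and pushes $U$ further up. Integrating from a point $r_0\in[9n,r_1]$ up to $R$ then gives
\begin{align*}
\log\frac{I(R)}{I(r_0)} \;\geq\; \frac{(1-3\epsilon-\Lambda)\,R^2}{2(1+3\epsilon+\Lambda)^2} \;-\; O(\log R),
\end{align*}
so $I(r_0) \leq C\,R^{O(1)}\,I(R)\,\exp\!\bigl(-\tfrac{(1-3\epsilon-\Lambda)R^2}{2(1+3\epsilon+\Lambda)^2}\bigr)$. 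The condition $2\int_{|x|<9n}v^2\,\e^{-f}\leq \int_{|x|<r_1}v^2\,\e^{-f}$ then lets me upgrade this shell bound to the ball bound on $\{|x|<4n\}\subset\{|x|<9n\}$: integrating $I$ over $[9n,r_1]$ and absorbing the inner ball into the annulus via the hypothesis converts $\int_{|x|<4n} v^2\e^{-f}$ into a bound of the form $C \cdot I(r_0) \cdot r_1^{O(1)}$, and reassembling the $\phi$-error separated off earlier yields the $\Lambda^{-2}\|\phi\|_{L^2}^2$ term in the conclusion. The polynomial factor $R^{2n+68}$ in \eqref{e:goalA} absorbs the spherical surface-area factors, the $r^{2-k}$ weights in the definitions of $I$ and $D$, and the $O(\log R)$ slack lost during the integration.

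The main obstacle will be the precise bookkeeping of the $\epsilon$ and $\Lambda$ dependence through the frequency differential inequality: the errors from $\epsilon(|v|+|\nabla v|)$ in \eqref{e:oldONE} enter both $D'$ and the lower-order terms in $U'$, and they must be absorbed via weighted Cauchy--Schwarz in a manner that produces exactly the factors $1\pm 3\epsilon$ and $1\pm\Lambda$ appearing in the exponent of \eqref{e:goalA}; in particular, the constraint $\tfrac{8}{9} < (1-3\epsilon)^3$ in \eqref{e:oldONE} is presumably what is needed to keep the effective rate positive after all three absorption steps (in $D$, in $U'$, and in the final log-integration). A secondary difficulty is ensuring the threshold $r_1 > \max\{9n, 4n+64\sqrt{2}\}$ is large enough to make the continuity/bootstrap for $U(r)\geq r^2/16$ go through, and that $\bar R$ can be chosen so that the $O(\log R)$ slack is beaten by the leading quadratic gain.
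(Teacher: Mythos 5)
Your overall strategy---an Almgren-type frequency monotonicity with the $\epsilon$- and $\phi$-errors absorbed by weighted Cauchy--Schwarz, followed by integrating $(\log I)'=2U/r$---is the same as the paper's, but there is a genuine gap in how you run the bootstrap and in where the first half of \eqr{e:holefilledA} enters. You propagate only the single condition $U(r)\geq r^2/16$ and defer the hole-filling hypothesis $2\int_{|x|<9n}v^2\e^{-f}\leq\int_{|x|<r_1}v^2\e^{-f}$ to a final ``upgrade'' step. But the frequency differential inequality cannot even be derived without a Poincar\'e-type control on balls: after writing $v\,\cL v=-v^2+v\phi-v[(\cL+1)v-\phi]$, the error in $D(r)$ is of size $\epsilon\int_{|x|<r}(v^2+|\nabla v|^2)\e^{-f}$ plus the $\phi$-term, and to absorb this with coefficients $1\pm3\epsilon\pm\Lambda$ one needs $\int_{|x|<r}v^2\e^{-f}\leq\frac12\int_{|x|<r}|\nabla v|^2\e^{-f}$ (the paper's condition $(\star 1)$); without it the quantity $\int(|\nabla v|^2-v^2)\e^{-f}$ dominating $D$ could even be negative and the $\epsilon$-errors swamp the main term. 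The paper therefore introduces the modified frequency $U_E=E/I$ with $E=r^{2-k}\e^{r^2/4}\int_{|x|<r}(|\nabla v|^2-v^2)\e^{-f}$, and runs a \emph{two}-condition bootstrap, propagating $(\star 1)$ and $U_E>r^2/32$ simultaneously (showing neither can fail first); it is Lemma \ref{l:511}, applied to the first half of \eqr{e:holefilledA}, that seeds $(\star 1)$ at $r=r_1$. This is the missing idea: the hypothesis you treat as end-game bookkeeping is what makes your differential inequality valid in the first place.

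Two secondary issues. Your heuristic that the inequality ``pushes $U$ further up'' once above threshold is not right as stated: the inequality $(\log U_E)'\geq\frac{r}{2}-\frac{r}{U_E}-(1+\kappa)^2\frac{U_E}{r}+\dots$ has a term that is \emph{more negative} the larger $U_E$ is, so large frequency can decrease; what one actually shows is that $U_E$ cannot cross the threshold downward because at the threshold $(\log U_E)'$ exceeds the logarithmic derivative of the threshold. Relatedly, your rate $(\log I)'\gtrsim r/(1+3\epsilon+\Lambda)$ integrates to $R^2/(2(1+3\epsilon+\Lambda))$, not the exponent $\frac{(1-3\epsilon-\Lambda)R^2}{2(1+3\epsilon+\Lambda)^2}$ of \eqr{e:goalA}, which comes precisely from the two-sided comparison $|D-E|\leq(3\epsilon+\Lambda)E$ available under $(\star 1)$. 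Finally, converting the $I$-bound into the ball bound by integrating $I$ over $[9n,r_1]$ requires control of $I$ \emph{below} $r_1$, where monotonicity is unknown; the paper instead picks $r_2\in[r_1,2r_1]$ minimizing $D$ and uses $(\star 1)$ once more to get $r_2^{2-k}\e^{r_2^2/4}\int_{|x|<r_2}v^2\e^{-f}\leq E(r_2)\leq 2I(2r_1)$.
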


\vskip2mm
To prove Theorem \ref{t:goalCM7}, we will find a scale $r_1$ where Theorem \ref{t:app} applies to give that $w$ is bounded by $\mu^{\nu}$.  To do this, we will find a long stretch where  $\Hx_w$ must grow and, thus,  $w$  must also have grown.  Note  that $\Hx_w$ is easier to work with since each $\RR^k$ derivative lowers the eigenvalue by $1/2$ and, thus, lowers the threshold for growth (cf. \cite{CM7}).

\vskip2mm
The proof of Theorem \ref{t:app} uses a modified version of the frequency.  Define  $E $ and  $U_E$ by 
\begin{align}
	E(r)  &= 
	 r^{2-k} \, \e^{ \frac{r^2}{4} } \, \int_{|x| <r} \left\{ |\nabla v|^2  -v^2   \right\} \e^{-f}= D(r) - r^{2-k} \, \e^{ \frac{r^2}{4} } \, \int_{|x|<r}   \left( v\cL v  + v^2   \right)    \e^{-f}  \, ,  \\
	U_E(r) &= \frac{E(r)}{I(r)} \, .
\end{align}

\begin{Lem}	\label{l:diffineq}
If $E(r) > 0$, then 
\begin{align}
	\left( \log U_E \right)'  (r)   \geq 
	\frac{2-k}{r}   + \frac{r}{2}  - \frac{r}{U_E} +  \frac{U(r)}{r} \left(  \frac{D(r)}{E(r)} - 2  \right) \, .
\end{align}
\end{Lem}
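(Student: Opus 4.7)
The plan is to prove the lemma as a purely kinematic identity: no use of the equation \eqref{e:oldONE} or the bounds \eqref{e:w11}--\eqref{e:wijbounds} is needed, and the PDE enters only later when this monotonicity is applied in Theorem \ref{t:app}. The key reduction is to bound $P'/P$ (where $P$ is the weighted bulk integral underlying $E$) via a Cauchy--Schwarz inequality on the sphere $\{|x|=r\}\subset\cC$.

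First, I split $\log U_E = \log E - \log I$. The excerpt records $(\log I)'(r) = 2U(r)/r$, so it suffices to compute $(\log E)'$. Writing $E(r) = r^{2-k}\,\e^{r^2/4}P(r)$ with
\[
P(r)\;=\;\int_{|x|<r}\bigl(|\nabla v|^2 - v^2\bigr)\,\e^{-f},
\]
logarithmic differentiation gives $(\log E)'(r) = (2-k)/r + r/2 + P'(r)/P(r)$. Since $f\equiv r^2/4$ on $\{|x|=r\}\subset\cC$, the co-area formula yields $\e^{r^2/4}P'(r) = \int_{|x|=r}(|\nabla v|^2 - v^2)$, whence
\[
\frac{P'(r)}{P(r)} \;=\; \frac{r^{2-k}}{E(r)}\int_{|x|=r}\bigl(|\nabla v|^2 - v^2\bigr).
\]

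Next, I decompose $|\nabla v|^2 = v_r^2 + |\nabla_T v|^2$ on $\{|x|=r\}$, where $v_r$ is the radial derivative in the $\RR^k$-factor. The tangential term is non-negative and is discarded. The $-v^2$ term contributes $-rI/E$ to $P'/P$, since $r^{2-k}\int_{|x|=r}v^2 = rI$ by definition of $I$. For the radial term, the Cauchy--Schwarz inequality
\[
\Bigl(\int_{|x|=r}v\,v_r\Bigr)^2 \;\leq\; \Bigl(\int_{|x|=r}v^2\Bigr)\Bigl(\int_{|x|=r}v_r^2\Bigr),
\]
combined with $\int_{|x|=r}v\,v_r = r^{k-2}D$ and $\int_{|x|=r}v^2 = r^{k-1}I$, yields $r^{2-k}\int_{|x|=r}v_r^2 \geq D^2/(rI) = UD/r$. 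Hence
\[
\frac{P'(r)}{P(r)} \;\geq\; \frac{UD}{rE} \;-\; \frac{rI}{E} \;=\; \frac{UD}{rE} \;-\; \frac{r}{U_E}.
\]

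Combining this with $(\log U_E)' = (2-k)/r + r/2 + P'/P - 2U/r$ and pairing the two $U/r$-terms as $(U/r)(D/E - 2)$ gives the asserted inequality. The only subtlety is bookkeeping the powers of $r$ and the weights $\e^{\pm r^2/4}$ so that Cauchy--Schwarz produces exactly $UD/r$ (as opposed to a quantity off by a factor of $r$); once the identifications $\int_{|x|=r}v\,v_r = r^{k-2}D$ and $\int_{|x|=r}v^2 = r^{k-1}I$ are made, the remainder is routine, and there is no conceptual obstacle.
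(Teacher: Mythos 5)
Your proof is correct and is essentially the paper's argument: the paper likewise differentiates $E$, writes $E'(r) = \frac{2-k}{r}E + \frac{r}{2}E + r^{2-k}\int_{|x|=r}(|\nabla v|^2 - v^2)$, and applies the same Cauchy--Schwarz inequality on the sphere to get the lower bound $\frac{UD}{r} - rI$, then subtracts $(\log I)' = \frac{2U}{r}$. You have simply spelled out the co-area and power-of-$r$ bookkeeping that the paper leaves implicit.
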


\begin{proof}
The  Cauchy-Schwarz inequality  $\left( \int u u_r \right)^2 \leq \int u^2 \,\int |\nabla u|^2$ gives  
\begin{align}
	E'(r) = \frac{2-k}{r} \, E + \frac{r}{2} \, E   +  r^{2-k} \, \int_{|x|=r} (  |\nabla v|^2 -v^2) \geq 
	\frac{2-k}{r} \, E + \frac{r}{2} \, E  +  \frac{UD}{r} -  r\, I \, .
\end{align}
  The lemma follows from this since  $\frac{I'}{I} = \frac{2U}{r}$.
\end{proof}

 The next lemma is valid for any function $v$.

\begin{Lem}	\label{l:511}
If $r > \bar{r}>3n$  and  
\begin{align}	\label{e:assum}
	\int_{|x| < \bar{r}} v^2 \, \e^{-f} \leq    \int_{  \bar{r} < |x| < r} v^2 \, \e^{-f} \, ,
\end{align}
then  
\begin{align}
	 \int_{   |x| < r} v^2 \, \e^{-f} \leq \frac{32}{ \bar{r}^2  - 8n}  \, \int_{|x| < r} |\nabla v|^2 \, \e^{-f}   \, .
\end{align}
\end{Lem}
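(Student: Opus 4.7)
\bigskip

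\noindent\textbf{Proof plan for Lemma \ref{l:511}.} The plan is to prove this Poincar\'e-type inequality via the standard Gaussian integration-by-parts trick against the radial vector field on the $\RR^k$ factor, and then use the hypothesis \eqr{e:assum} to absorb the obstruction coming from the central region.

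First I would apply the weighted divergence theorem on $\{|x| < r\} \subset \cC$ to the vector field $v^2\, x$, where $x$ denotes the radial vector field on the $\RR^k$ factor. The key identity is $\dv_f(x) = k - |x|^2/2$ (from $\dv_f = \dv - \tfrac{1}{2}\langle x, \cdot \rangle$), and the resulting boundary term is the nonnegative quantity $r \int_{|x|=r} v^2\,\e^{-f}$. Dropping it and rearranging yields
\begin{align*}
	\int_{|x|<r} v^2 \left( \tfrac{|x|^2}{2} - k \right) \e^{-f} \leq 2 \int_{|x|<r} v\, x \cdot \nabla v\, \e^{-f} \, .
\end{align*}
An AM--GM step $2|v\, x\cdot\nabla v| \leq \tfrac{1}{4} v^2 |x|^2 + 4 |\nabla v|^2$, chosen so that only a quarter of the quadratic weight is absorbed, then gives
\begin{align*}
	\int_{|x|<r} v^2 \left( \tfrac{|x|^2}{4} - k \right) \e^{-f} \leq 4 \int_{|x|<r} |\nabla v|^2 \, \e^{-f} \, .
\end{align*}

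Next I would lower-bound the left hand side by splitting into the inner ball $\{|x| < \bar r\}$ and the annulus $\{\bar r < |x| < r\}$. On the annulus the weight satisfies $|x|^2/4 - k \geq \bar r^2/4 - k$, while on the inner ball I only know the trivial bound $|x|^2/4 - k \geq -k$. Feeding in the hypothesis \eqr{e:assum} (equivalently, $\int_{\bar r < |x| < r} v^2 \e^{-f} \geq \tfrac{1}{2} \int_{|x|<r} v^2 \e^{-f}$ and $\int_{|x|<\bar r} v^2 \e^{-f} \leq \int_{\bar r < |x| < r} v^2 \e^{-f}$), the negative contribution $-k \int_{|x|<\bar r} v^2 \e^{-f}$ combines with one copy of $-k$ on the outer side to produce the total lower bound $\tfrac{\bar r^2 - 8k}{8} \int_{|x|<r} v^2 \e^{-f}$. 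Since $k \leq n$ and $\bar r > 3n$ forces $\bar r^2 > 9n^2 > 8n \geq 8k$, combining with the upper bound yields the claimed estimate with constant $32/(\bar r^2 - 8n)$.

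The only real subtlety is calibrating the AM--GM weight: it must be small enough that, after absorbing the resulting $|x|^2 v^2$ term into the left side, a definite positive multiple of $|x|^2 v^2$ remains there (here a quarter), but large enough that the $|\nabla v|^2$ constant stays finite. The hypothesis \eqr{e:assum} is then exactly what is needed to overcome the fact that the effective Poincar\'e weight $|x|^2/4 - k$ is negative in the central region. Beyond this bookkeeping no hard analysis is required.
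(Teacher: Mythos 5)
Your argument is correct and is essentially identical to the paper's proof: the same weighted divergence theorem applied to $v^2\,x$ with $\dv_f(x)=k-|x|^2/2$, the same absorbing inequality $2|v\langle x,\nabla v\rangle|\leq |x|^2v^2/4+4|\nabla v|^2$, and the same two uses of the hypothesis \eqr{e:assum} to handle the inner ball and to convert the annulus integral into the full integral. The constant bookkeeping ($32/(\bar r^2-8k)\leq 32/(\bar r^2-8n)$ since $k\leq n$) also matches.
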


\begin{proof}
Since $\cL \, |x|^2 = 2k - |x|^2$, we have $\dv_f \, \left( v^2 \, x \right) = 2 \, v \langle x , \nabla v \rangle +v^2 ( k - |x|^2/2)$.  Using the absorbing inequality 
$ 2 \, |v \langle x , \nabla v \rangle| \leq |x|^2 v^2/4 + 4 \, |\nabla v|^2$, the divergence theorem gives
\begin{align}
	 \int_{ |x| < r} \left( \frac{|x|^2}{2} - k \right) v^2 \, \e^{-f} &= - r \, \e^{  - \frac{r^2}{4}} \, \int_{|x| =r} v^2 + 2 \int_{|x| < r} v \, \langle x , \nabla v \rangle \, \e^{-f}  \notag \\
		&\leq  - r \, \e^{  - \frac{r^2}{4}} \, \int_{|x| =r} v^2 + \int_{|x| < r} \left( v^2 \, \frac{|x|^2}{4} + 4\, |\nabla v|^2 \right) \,  \, \e^{-f}   \, .
\end{align}
It follows that
\begin{align}
	\left( \frac{\bar{r}^2}{4} - k \right) \,  \int_{\bar{r} < |x|< r}  v^2 \, \e^{-f}  -k\, \int_{|x|< {\bar{r}}}   v^2 \, \e^{-f} 
	\leq \int_{|x|< r} \left( \frac{|x|^2}{4} - k \right) v^2 \, \e^{-f} \leq 4 \,  \int_{|x|< r} |\nabla  v|^2 \, \e^{-f}   \, .  \notag
\end{align}
Bringing in the assumption \eqr{e:assum} gives
\begin{align}
	\left( \frac{\bar{r}^2}{4} - 2k \right) \,  \int_{\bar{r} < |x|< r}  v^2 \, \e^{-f}   \leq 4 \,  \int_{|x|< r} |\nabla  v|^2 \, \e^{-f}   \, .  \notag
\end{align}
The lemma follows from this and using the assumption again.
 \end{proof}

\begin{proof}[Proof of Theorem \ref{t:app}]
We can assume that
$	 \| \phi \|_{L^2}^2 \leq \Lambda^2 \, \int_{|x|<{4n} } v^2 \e^{-f} $
since  otherwise we get \eqr{e:goalA} immediately.
Therefore, given any $r \geq 4n$,    \eqr{e:oldONE}   gives
\begin{align}	\label{e:from1}
 	\left| D - E \right| (r) &\leq 
	 r^{2-k}  \,   \e^{ \frac{r^2}{4} } \, \int_{ |x| < r}  \left(  ( \epsilon + \Lambda ) v^2 + \epsilon |\nabla v|^2   \right) \, \e^{-f} 
	 \, .
\end{align}

Suppose now that some $r \geq 4n$ satisfies
\begin{itemize}
\item[($\star 1$)]   $\int_{|x| < r} v^2 \, \e^{-f} \leq \frac{1}{2} \, \int_{|x| < r} |\nabla v|^2 \, \e^{-f}$.
\end{itemize}
We will use ($\star 1$) to show that $D(r)$ and $E(r)$ are comparable, get  a differential inequality for $U_E (r)$ and    bound  the ratio of the derivatives of quantities in
($\star 1$).  Namely,  $(\star 1$) gives  
\begin{align}	\label{e:e518}
	 \frac{1}{2} \, r^{2-k} \, \e^{ \frac{r^2}{4} } \int_{|x|<r} |\nabla v|^2 \, \e^{-f} \leq  r^{2-k} \, \e^{ \frac{r^2}{4} } \int_{|x|<r}\left(  |\nabla v|^2 - v^2 \right) \, \e^{-f} 
	 = E(r)  \, .
\end{align}
Similarly, using \eqr{e:from1},   $(\star 1$)  and \eqr{e:e518}  gives that
\begin{align}
	|D-E|(r) \leq \left( \frac{3\epsilon}{2} + \frac{\Lambda}{2} \right) \, r^{2-k} \, \e^{ \frac{r^2}{4} } \int_{|x|<r} |\nabla v|^2 \, \e^{-f} \leq \left( 3\epsilon +  \Lambda  \right)  \, E(r)   \, .
\end{align}
We conclude that $   D(r) $, and thus also $I'(r)$,  are also positive and
\begin{align}  \label{e:step1a}
	\left| U - U_E \right| (r) &\leq \left( 3\epsilon +  \Lambda \right)  \, U_E (r) \,  , \\
	\left(1 - 3\epsilon -  \Lambda \right) \, U_E &\leq U(r)  \leq \left(1 + 3\epsilon +  \Lambda \right) \, U_E \, . \label{e:step1b}
\end{align}
Using this in Lemma \ref{l:diffineq} gives the differential inequality at $r$
\begin{align}
	\left( \log U_E \right)'    &\geq 
	\frac{2-k}{r}   + \frac{r}{2}   -   \frac{r}{U_E} -  \left( 1 + 3\epsilon +  \Lambda   \right)^2 \, \frac{U_E}{r}   \, . \label{e:step1}
\end{align}
 From \eqr{e:step1b}, the definition of $U$, and the Cauchy-Schwarz inequality, we get at $r$ that
\begin{align}	\label{e:goodfor1}
	\left(1 - 3\epsilon -  \Lambda \right)^2 \, U_E^2 \, I^2 \leq U^2 \, I^2 = D^2 \leq I \, r^{3-k} \, \int_{|x| =r} |\nabla v|^2 \, .
\end{align}
Noting that $3\epsilon + \Lambda < \frac{1}{2}$, we get
\begin{align}	\label{e:goodfor1a}
	\frac{U_E^2(r)}{4\, r^2}  \leq \left(1 - 3\epsilon -  \Lambda \right)^2 \, \frac{U_E^2(r)}{r^2}  \leq \frac{ \int_{|x| =r} |\nabla v|^2}{ \int_{|x| =r}  v^2 }
	 \, .
\end{align}

We will also need a second property (the first part is the strict form of ($\star 1$)):
\begin{itemize}
\item[($\star 2$)]   $\int_{|x| < r} v^2 \, \e^{-f} < \frac{1}{2} \, \int_{|x| < r} |\nabla v|^2 \, \e^{-f}$ and $ \frac{r^2}{32}  < U_E (r)  $.
\end{itemize}
Set $r_0 = 4n +64\sqrt{2}$.
We will   show that   if   ($\star 2$) holds for some $r \geq r_0$, then it holds for all $s \geq r$.  We will argue by contradiction, so suppose  that $s > r$ is the first time     ($\star 2$) fails.  
Note that ($\star 2$)  is equivalent to  $ U_E(r) > \frac{r^2}{32}   $ and $F(r) > 0$ where 
$
	F(r) = \int_{|x| < r} \left( \frac{1}{2} \, |\nabla v|^2 - v^2 \right) \, \e^{-f}  .
$
Since $s$ is the first time, we   have $F(s) \geq 0$ (i.e., ($\star 1$)), $U_E(s) - \frac{s^2}{32} \geq 0$ and
\begin{align}	\label{e:firsttime}
	F(t) > 0 {\text{ and }} U_E(t) - \frac{t^2}{32} > 0 {\text{ for all }} t \in [r,s) \, .
\end{align}
We also have that at least one of $F(s)$ and $U_E(s) - \frac{s^2}{32}$ is zero.  Suppose first that $F(s)=0$ and, thus, $F' (s) \leq 0$.  However, \eqr{e:goodfor1a} and 
$U_E(s) - \frac{s^2}{32} \geq 0$ give that
\begin{align}	 
	\left( \frac{s}{64} \right)^2 \leq \frac{U_E^2(s)}{4\, s^2}  \leq  \frac{ \int_{|x| =s} |\nabla v|^2}{ \int_{|x| =s}  v^2 }
	 \, .
\end{align}
However, this implies that $F'(s) > 0$ as long as $s > 64 \sqrt{2}$, giving the desired contradiction in the first case.  Suppose now that $U_E(s) = \frac{s^2}{32}$ and, thus, 
$U_E'(s) \leq  \frac{s }{16}$ and
\begin{align}	\label{e:UEs}
	( \log U_E )'(s) \leq \frac{2}{s} \, .
\end{align}
On the other hand, \eqr{e:step1} gives that
\begin{align}
	\left( \log U_E \right)'  (s)   &\geq 
	\frac{2-k}{s}   + \frac{s}{2}   -   \frac{32}{s} -  \left( 1 + 3\epsilon +  \Lambda   \right)^2 \, \frac{s}{32}  \geq \frac{3s}{8} - \frac{k+30}{s} \,  .
\end{align}
This contradicts \eqr{e:UEs} since $s \geq r_0$, completing the proof of the claim.

We will now show that ($\star 2$) holds for $r_1$.  Using the first part of \eqr{e:holefilledA}, we can apply Lemma \ref{l:511} (with $\bar{r} =9n$) to get
\begin{align}
	 \int_{   |x| < r_1} v^2 \, \e^{-f} \leq \frac{32}{ 81n^2  - 8n}  \, \int_{|x| < r_1} |\nabla v|^2 \, \e^{-f} < \frac{1}{2} \,   \int_{|x| < r_1} |\nabla v|^2 \, \e^{-f} \, ,
\end{align}
where the last inequality used that $  81n^2 > 8n + 64$.  This gives the first part of ($\star 2$); in particular, ($\star 1$) holds and  \eqr{e:step1b} gives that
\begin{align}
	U(r_1) \leq \left( 1 + 3\epsilon +  \Lambda \right) \, U_E(r_1) \leq \frac{3}{2} \, U_E(r_1) \, .
\end{align}
Since $\frac{r^2}{16} \leq U (r_1)$ by the second part of \eqr{e:holefilledA},   the second part of ($\star 2$) also holds.

We have established that ($\star 2$) holds for all $r  \geq r_1$,  so we get the differential inequality \eqr{e:step1} for $U_E$ and the equivalence \eqr{e:step1a} 
between $U$ and $U_E$.  This will give the desired growth of $U$ and, thus also, $I$.  We do this next.
  Set $\kappa = \left(  3\epsilon +  \Lambda  \right)$.
We claim that there exists $\bar{R}= \bar{R} (k, r_1) \geq r_1$ so that for all $r \geq \bar{R}$ we have 
\begin{align}	\label{e:thresh}
	U_E (r) > \frac{ r^2 -2k- 68}{2(1+\kappa)^2} \, .
\end{align}
The key  is that if \eqr{e:thresh} fails for some $r \geq r_1$, then \eqr{e:step1} implies that
\begin{align}	 	\label{e:maxpU}
	\left( \log U_E \right)'    \geq \frac{r}{2} - \frac{k+30}{r} - (1+\kappa)^2 \, \frac{U_E}{r} \geq 
	\frac{4}{r}         \, .
\end{align}
On the other hand, for $r \geq 4k$, we have
\begin{align}	\label{e:loggb}
	\left( \log \frac{ r^2 -2k- 68}{2(1+\kappa)^2}  \right)' = \frac{2r}{r^2-2k-68} < \frac{3}{r} 
	\, ,
\end{align}
where the last inequality used that 
$ 
	6k + 204 < r_0^2$.
	Integrating \eqr{e:maxpU} and \eqr{e:loggb} and using  that $  U_E \geq \frac{r^2}{32}$, gives an upper bound for the maximal interval where \eqr{e:thresh} fails. The first derivative test, \eqr{e:maxpU}, and \eqr{e:loggb}  imply that once \eqr{e:thresh} holds for some $R \geq r_1$, then it also holds for all   $r \geq R$.  This gives the claim.
Using \eqr{e:step1a} and \eqr{e:thresh}, we get for $r \geq \bar{R}$ that
\begin{align}	\label{e:thresh2}
	U (r) \geq  (1-\kappa) U_E (r) >    \frac{ (1-\kappa)  }{(1+\kappa)^2} \left( \frac{r^2}{2} - k -34\right)  \, .
\end{align}
Integrating this from $\bar{R}$ to $R$ gives that
\begin{align}
	\log \frac{I(R)}{I(\bar{R})} = 2\, \int_{\bar{R}}^R  \frac{U(r)}{r} \, dr &\geq   \frac{ (1-\kappa)  }{(1+\kappa)^2} \, \left( \frac{R^2 - \bar{R}^2}{2} -  (2k+68)  \, \log \frac{R}{\bar{R}} \right) \, .
\end{align}
Since    $\bar{R}$ depends only on $k$ and $r_1$, exponentiating gives $C=C(k,r_1)$ so that
\begin{align}	\label{e:521}
	\sup_{r_1 \leq r \leq \bar{R} } \, \, I(r) = I(\bar{R}) \leq C \, I(R) \, R^{(2k+68)\,  \frac{1-\kappa}{(1+\kappa)^2}} \, \e^{ - \frac{(1-\kappa)}{2(1+\kappa)^2}\, R^2 }   \, .
\end{align}
Choose $r_2 \in [r_1 , 2r_1]$ that achieves the minimum of $D$ on $[r_1 , 2r_1]$.  Since $I' = \frac{2D}{r}$, it follows that $D(r_2) \leq I (2r_1)$.
Therefore, since ($\star 1$) holds for $r_2$,  we have
\begin{align}	\label{e:537}
	r_2^{2-k} \, \e^{ \frac{r_2^2}{4} } \, \int_{|x| < r_2}  v^2 \, \e^{f} \leq E(r_2) \leq \frac{D(r_2)}{(1-\kappa)} \leq 2 \, I(2r_1) \, .
\end{align}
Finally, combining  \eqr{e:521} and \eqr{e:537} gives \eqr{e:goalA}.
\end{proof}

\section{General frequency}

In this section, we will  prove  Theorem \ref{t:goalCM7} by showing that either we already
   have the  bound on   $w$ or    \eqr{e:holefilledA} holds
and  Theorem \ref{t:app}   bounds $w$.   
Throughout this section, we will assume that $w$ satisfies \eqr{e:oldONE}--\eqr{e:wijbounds} and (A1)--(A3).

 \vskip2mm
The main task left is to prove the following proposition:

 \begin{Pro}	\label{l:getbeta}
 Given $r_+ \geq 9n$, there exist $\lambda > r_+$ and $\zeta_2$ so that if  $\zeta \geq \zeta_2$ and
 \begin{align}	\label{e:lambda}
 	   \int_{|x|<4n} \left| \Hx_w \right|^2 \geq \zeta  \, \mu^2 \, ,
\end{align}
then  there exists $r_1 \in ( r_+,  \lambda  )$ satisfying \eqr{e:holefilledA}.
  \end{Pro}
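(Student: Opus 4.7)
The plan is to argue by contradiction. Fix $r_+\geq 9n$. Choose $\lambda>r_+$ and $\zeta_2$ to be specified later (both depending only on $n$ and $r_+$). Suppose $\zeta\geq\zeta_2$, \eqr{e:lambda} holds, and yet for every $r\in(r_+,\lambda)$, the condition \eqr{e:holefilledA} with $v=w$ fails, so that at each such $r$ either (i) $I(r)<2\,I(9n)$ or (ii) $U(r)\leq r^2/16$, where $I$ and $U$ are computed from $w$.

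First I would convert the Hessian lower bound \eqr{e:lambda} into an $L^2$ lower bound on $w$ at a fixed scale. The tools are the pointwise higher-derivative bounds from (4) of Proposition \ref{p:evolving}, the approximate drift-harmonicity $|\cL w_{ij}|\leq C_r\mu$ from \eqr{e:wijbounds}, and the normalizations (A1)–(A3) of Lemma \ref{l:taylorsphere}. The latter cut out exactly the kernel of $\cL+1$ and the low spherical modes that could carry Hessian mass decoupled from $L^2$ mass. Combined with the spectral gap of $\cL+1$ above its kernel on $\cC$ (cf.\ the reasoning of Corollary \ref{c:bartime} running in the reverse direction), I expect an estimate of the form $I(9n)\geq c_0\,\zeta\,\mu^2$ with $c_0=c_0(n)>0$. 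This is the key quantitative link between the Hessian assumption and the frequency setup.

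Next I would combine this lower bound with the frequency machinery to extract a radius where both conditions of \eqr{e:holefilledA} hold simultaneously. The equation \eqr{e:oldONE} for $w$ gives, through the argument of Lemma \ref{l:diffineq}, a near-monotonicity of the modified frequency $U_E$ and an approximate equality $U\approx U_E$ (modulo the $\epsilon$ and $\phi$ errors, controlled by \eqr{e:w11}). If (ii) held throughout $(r_+,\lambda)$, then $(\log I)'=2U/r\leq r/8$ gives $I(\lambda)\leq I(r_+)\e^{(\lambda^2-r_+^2)/16}$, which, together with the Step 1 lower bound and an upper bound extracted from \eqr{e:w11} and the pointwise control \eqr{e:w22}, forces a contradiction once $\lambda$ is large enough (depending on $r_+$ and $n$) and $\zeta\geq\zeta_2$. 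Hence there exists $r_0\in(r_+,\lambda)$ with $U(r_0)>r_0^2/16$. Once $U$ exits the sub-threshold regime, the differential inequality \eqr{e:step1} from the proof of Theorem \ref{t:app} keeps $U$ above $r^2/16$ for all larger $r$ (up to $\lambda$), which in turn drives $(\log I)'\geq r/8$ and forces $I$ to double past $2I(9n)$ within a controlled additional distance. Taking $r_1$ to be the first radius $\geq r_0$ at which $I(r_1)=2I(9n)$ and choosing $\lambda$ large enough to accommodate it yields the $r_1$ satisfying both parts of \eqr{e:holefilledA}, contradicting the standing assumption.

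The main obstacle is Step 1: producing a clean lower bound $I(9n)\geq c_0\,\zeta\,\mu^2$ with $c_0$ depending only on $n$. The bounds \eqr{e:w11}--\eqr{e:wijbounds} give only upper control, so the lower bound must be synthesized from the orthogonality (A1)–(A3), the approximate $\cL$-harmonicity of $\Hx_w$, and a spectral gap argument. A secondary difficulty is choosing $\lambda$ so that the two alternatives in Step 2 close up at a common scale, since the rapid-growth mechanism of Theorem \ref{t:app} kicks in only once $U$ has crossed the threshold, and the $\epsilon(|w|+|\nabla w|)+\phi$ errors must be absorbed without disturbing the frequency monotonicity.
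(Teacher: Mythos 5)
There is a genuine gap, and it sits in your Step 2 rather than where you locate it. Step 1 is in fact the easy direction: the reverse Poincar\'e inequality of Lemma \ref{l:RPo} converts \eqr{e:lambda} directly into a lower bound $\int_{|x|<4n+3} w^2 \, \e^{-f} \geq c_n\, \zeta\, \mu^2$, with no spectral-gap argument needed. The problem is that this lower bound, combined with the assumed failure of \eqr{e:holefilledA}, produces no contradiction. If $U(r)\leq r^2/16$ on $(r_+,\lambda)$ you only obtain an \emph{upper} bound $I(\lambda)\leq I(r_+)\,\e^{(\lambda^2-r_+^2)/16}$, and slow growth of $I$ is perfectly consistent with both the lower bound $I\gtrsim \zeta\mu^2$ at a fixed scale and the global bound $\|w\|_{L^2}^2\leq \mu$: the latter only caps $\int I(t)\,t^{k-1}\e^{-t^2/4}\,dt$, and $\e^{r^2/16}\ll \e^{r^2/4}$. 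Indeed, the slowly growing kernel elements \eqr{e:kernelcC} of $\cL+1$ have bounded frequency and never cross the threshold $r^2/16$; the normalizations (A1)--(A3) only remove their traces at $x=0$ and do not force the frequency of the remainder above any quadratic threshold. Relatedly, the ``near-monotonicity of $U_E$'' you invoke holds only once $U_E$ already exceeds a threshold of order $r^2$ and ($\star 1$) holds (see ($\star 2$) and \eqr{e:step1} in the proof of Theorem \ref{t:app}); below the threshold the term $-r/U_E$ in Lemma \ref{l:diffineq} is destabilizing, so monotonicity cannot bootstrap $U$ from positive to supercritical. Your final step is also circular: propagating $U>r^2/16$ forward via \eqr{e:step1} requires ($\star 1$), which is essentially the first part of \eqr{e:holefilledA} that you have not yet established.

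What is missing is the mechanism the paper actually uses: pass to the Euclidean Hessian $\Hx_w$, whose entries satisfy $\cL w_{ij}=\psi_{ij}$ with \emph{zero} eigenvalue shift. For $\cL$ (as opposed to $\cL+1$) the Neumann--Poincar\'e inequality together with (A1) and \eqr{e:lambda} (which absorbs the $\mu^2$ errors --- this is where $\zeta_2$ enters) makes the associated frequency $U_2$ nonnegative from the start (Lemma \ref{l:jpos}), and the frequency ODE then drives $U_2(r)\geq r^2/3$ for $r\geq r_n$ (Lemma \ref{l:Uc}). Since $\e^{r^2/3}$ beats the Gaussian weight $\e^{-r^2/4}$, the weighted mass of $\Hx_w$ --- and hence, by Lemma \ref{l:RPo}, of $w$ --- must pile up at large radii; this yields the first part of \eqr{e:holefilledA}, and combining with the Poincar\'e inequality of Lemma \ref{l:PIq} transfers the rapid growth to $I_0$ and $U_0$ for $w$ itself, yielding the second part. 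Without this detour through the Hessian frequency there is no source of growth, and the contradiction you aim for does not materialize.
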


\vskip1mm  Throughout this section, $C_r$ will be a constant that depends on $r$ (but not on $w$ or $\mu$) that will be allowed to change from line to line.

\subsection{Proof of Theorem \ref{t:goalCM7} assuming Proposition \ref{l:getbeta}}

\begin{Lem}	\label{l:avgs}
Given $r> 0$, there exists $C_r$ so   that
\begin{align}	\label{e:muCr}
	  \left|  \int_{|x| < r} \Hx_w \right|  +
	  \left|  \int_{|x| <r}  w \right| &\leq C_r \, \mu \, .   
\end{align}
Furthermore, given any $h$ on $\SS^{n-k}$ with $\Delta_{\theta} h = - \frac{1}{2} \, h$ and $\int h^2 (\theta) \, d\theta = 1$, we have
\begin{align}
	 \left|  \int_{|x| < r}  h\, \nabla^{\RR^k} w \right| \leq C_r \, \mu \, .
\end{align}
\end{Lem}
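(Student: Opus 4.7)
The plan is to decompose each integrand into $\Delta_\theta$-eigenfunctions on $\SS^{n-k}$ and reduce the lemma to three analogous estimates on the ball $\{|x|<r\}\subset\RR^k$.  By Fubini and orthogonality, $\int_{|x|<r} w$ reduces to $V(\SS^{n-k})\int_{|x|<r,\,\RR^k}\bar w\,dx$ with $\bar w(x) = V(\SS^{n-k})^{-1}\int_\theta w(\theta,x)\,d\theta$ the spherical average; each component of $\int_{|x|<r}\Hx_w$ reduces to $\int_{|x|<r}\bar w_{ij}\,dx$; and $\int_{|x|<r} h\,\nabla^{\RR^k}w$ reduces to integrals of $\hat w_i(x) = \int h(\theta)\,w_i(\theta,x)\,d\theta$.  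Each projection $u \in\{\bar w,\bar w_{ij},\hat w_i\}$ satisfies a shifted drift-Laplace equation $(\cL_{\RR^k}+c)u = g$ on $\RR^k$, with $c\in\{1,0,\tfrac12\}$ respectively; averaging the pointwise bounds \eqref{e:new3p1}, \eqref{e:wijbounds}, and the $h$-projection of \eqref{e:thenewguy} gives $|g|\le C_r\mu$ on $|x|<4n$ (resp.\ $|x|<r$).  The conditions (A1)--(A3) furnish initial data $|\bar w(0)|\le\mu$, $\bar w_{ij}(0)=0$, and $\hat w_i(0)=0$.

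Next, apply Green's identity in the Gaussian-weighted sense on $\{|x|<r\}$ with a test function $\eta$ solving $(\cL_{\RR^k}+c)\eta = 1$ (for instance $\eta = |x|^2/(2k)$ when $c=1$, with analogous explicit radial choices for the other values of $c$).  This yields
\[
\int_{|x|<r} u\,e^{-f}\,dx = \int_{|x|<r} g\,\eta\,e^{-f}\,dx \;-\; \int_{|x|=r}\bigl[u\,\partial_\nu\eta - \eta\,\partial_\nu u\bigr]\,e^{-f}\,dA.
\]
The volume term is $O(C_r\mu)$ by the pointwise bound on $g$, and since $e^f$ is uniformly bounded on $|x|<r$, the unweighted integral $\int_{|x|<r}u\,dx$ is comparable up to a factor depending on $r$.

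The main obstacle will be the boundary integral on $|x|=r$: the pointwise bound $|u|,|\nabla u|\lesssim\mu^{1/2}$ from \eqref{e:w22} alone yields only $O(\mu^{1/2})$.  To improve this, I decompose $u = u_{\mathrm{ker}} + u_\perp$ where $u_{\mathrm{ker}}$ is the Gaussian-$L^2$ projection onto the (finite-dimensional) kernel of $(\cL_{\RR^k}+c)$, a space of Hermite polynomials parallel to \eqref{e:kernelcC}.  The kernel coefficients of $u_{\mathrm{ker}}$ are pinned by the low-order Taylor data of $u$ at the origin, and (A1)--(A3) force these coefficients to be $O(\mu)$, so $u_{\mathrm{ker}}$ contributes $O(C_r\mu)$ to every integral.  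The orthogonal piece $u_\perp$ satisfies the same equation $(\cL_{\RR^k}+c)u_\perp = g$, so by the spectral gap of $(\cL_{\RR^k}+c)$ on the orthogonal complement, combined with interior elliptic regularity on $|x|<r+1$ and the pointwise $O(\mu)$ bound on $g$, one controls $u_\perp$ and $\nabla u_\perp$ on $|x|=r$ at the $O(\mu)$ scale, finishing the boundary bound.  The same scheme, with the eigenvalue shift adjusted to $c=0$ (using $\cL \bar w_{ij} = O(\mu)$ and (A1)) and $c=\tfrac12$ (using the $h$-projection of \eqref{e:thenewguy} and (A2)), handles $\bar w_{ij}$ and $\hat w_i$.
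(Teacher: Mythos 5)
Your reduction to the three spherical projections and your use of (A1)--(A3) as data at $x=0$ is exactly the paper's setup, but the mechanism you propose for propagating the bound out to radius $r$ has two genuine gaps. First, the pointwise source bound $|g|\le C_r\,\mu$ is only available for $\bar w_{ij}$, via \eqr{e:wijbounds}. For $\bar w$ and $\hat w_i$ the only hypotheses valid beyond $|x|<4n$ are \eqr{e:oldONE} and \eqr{e:thenewguy}, whose error terms are $\epsilon\,(|w|+|\nabla w|+\cdots)$ with $\epsilon$ a \emph{fixed} constant; by \eqr{e:w22} these are only $O(\mu^{1/2})$ pointwise (and in Gaussian $L^2$), so your $g$ is $O(\mu^{1/2})$, not $O(\mu)$, on the annulus $4n<|x|<r$. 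Since the lemma is invoked with $r$ as large as $2\lambda\gg 4n$ (e.g.\ in Lemma \ref{l:jpos} and Corollary \ref{p:propw}), this is not a corner case. Second, the kernel-projection/spectral-gap step does not close the argument: the Gaussian spectral theory of $\cL_{\RR^k}+c$ lives on all of $\RR^k$ while $w$ is defined only on $|x|<R$; the $L^2(\e^{-f})$ projection onto the kernel is a global inner product and is not pinned by Taylor data at the origin; and even waiving both points, the spectral gap yields $\|u_\perp\|_{L^2}\lesssim\|g_\perp\|_{L^2}=O(\mu^{1/2})$ for $\bar w$ and $\hat w_i$, i.e.\ nothing beyond the a priori bound. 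The content of the lemma is precisely that the \emph{averages} are one order better than the function itself; a scheme that tries to force $u_\perp$ down to $O(\mu)$ in $L^2$ or pointwise is aiming at a statement that is false under the stated hypotheses. (Also a bookkeeping slip: projecting \eqr{e:thenewguy} against a $\tfrac12$-eigenfunction $h$ converts $\cL+\tfrac12$ into $\cL_{\RR^k}$, so the shift for $\hat w_i$ is $c=0$, not $\tfrac12$.)

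The paper avoids both problems by never using \eqr{e:oldONE} or \eqr{e:thenewguy} for the averages. It first treats $\bar w_{ij}$: the spherical average $J_{ij}(r)=r^{1-k}\int_{|x|=r}w_{ij}$ satisfies $J_{ij}(0)=0$ by (A1), and the drift divergence theorem gives $J_{ij}'(r)=r^{1-k}\,\e^{r^2/4}\int_{|x|<r}\cL w_{ij}\,\e^{-f}$, which is $O(C_r\mu)$ by \eqr{e:wijbounds}; integrating the first-order ODE gives $|J_{ij}(r)|\le C_r\mu$ with no boundary data at $|x|=r$ ever entering. The bounds for $\bar w$ and $\hat w_i$ are then \emph{derived from the Hessian bound just established} (together with $\int_{|x|<r}\Delta_\theta w\,\e^{-f}=0$ and a Gronwall absorption of the drift term), not from their own equations. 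Note that the boundary terms in your Green's identity are exactly $J(r)$ and $J'(r)$, so even on your route you would need this ODE argument first; once you have it, the Green's identity is superfluous.
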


\begin{proof}
Let $w_{ij}$ be a Euclidean second derivative and 
define  the spherical average
\begin{align}
	J_{ij} (r) = r^{1-k} \, \int_{|x| =r} w_{ij} \, .
\end{align}
By (A1), we have  $J_{ij}   (0) = 0$.  Note that $\left| \cL \, w_{ij} \right| \leq C_r \, \mu  $, so we have
\begin{align}
	|J_{ij}' | (r) \leq r^{1-k} \, \e^{ \frac{r^2}{4} } \int_{|x| < r} \left|  \cL \, w_{ij} \right| \, \e^{-f}  \leq C_{r} \, \mu \, .
\end{align}
 Thus,  we get that $|J_{ij}(r)| \leq C_{r} \, \mu$.
 Integrating this gives the integral bound on $\Hess_w$ in \eqr{e:muCr} and, thus, the same bound on $\left| \int_{|x| < r} \Delta_{\RR^k} w \right|$.   The bound on
  $\left| \int_{|x| < r}  w \right|$ follows similarly by setting $J (r) = r^{1-k} \, \int_{|x| =r} w$.  Namely,  (A3) bounds $J(0)$ and we bound $J'(r)$ by using that
$\Delta_{\theta} w$ integrates to zero over each sphere and $\left| \int_{|x| < r} \Delta_{\RR^k} w \right| \leq C_r \, \mu$.

To get the last claim, define a vector-valued function $J_h (r)$ by  
\begin{align}
	J_h (r) = r^{1-k} \,    \int_{|x| = r}  h\, \nabla^{\RR^k} w      \, , 
\end{align}
so that $J_h (0) = 0$ by  (A2).  Arguing as above and using the integral bound on the Euclidean Hessian bounds $J_h(r)$ and integrating this gives the last claim.
\end{proof}

\begin{Cor}	\label{p:propw}
Given $\bar{r} > 4n$,  there exist $C_{\bar{r}}$ so that 
\begin{align} \label{e:A16}
	\int_{  |x| <\bar{r}} \left| \Hx_w \right|^2 \leq C_{\bar{r}} \, \mu^2 + C_{\bar{r}} \, \int_{\bar{r} < |x| < \bar{r} +1} \left| \Hx_w \right|^2   \, .
\end{align}
 \end{Cor}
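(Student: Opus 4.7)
The plan is to prove this hole-filling Caccioppoli estimate by integrating by parts against the drift measure, using the approximate PDE satisfied by the Euclidean second derivatives from \eqref{e:wijbounds} together with the mean bound from Lemma~\ref{l:avgs}, and closing the argument with Poincar\'e--Wirtinger on the bounded region $\{|x| < \bar r + 1\} \subset \cC$. Fix indices $i, j$ and set $u = w_{ij}$. From \eqref{e:wijbounds} we have $|\cL u| \leq C_{\bar r} \, \mu$ pointwise on $\{|x| < \bar r + 1\}$, and from Lemma~\ref{l:avgs} we have $|\int_{|x| < \bar r + 1} u| \leq C_{\bar r} \, \mu$. Since $e^{-f}$ is comparable to $1$ on $B_{\bar r + 1}$, weighted and unweighted $L^2$ norms over this region differ only by a factor $C_{\bar r}$.

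First I would choose a smooth cutoff $\eta$ with $\eta \equiv 1$ on $\{|x| \leq \bar r\}$, $\supp \eta \subset \{|x| < \bar r + 1\}$, $0 \leq \eta \leq 1$, and $|\nabla \eta| \leq 2$, so that $\nabla \eta$ is supported in the annulus $A = \{\bar r < |x| < \bar r + 1\}$. Integration by parts against the drift measure (with no boundary contribution, because $\eta$ has compact support) yields
\begin{align*}
	\int \eta^2 |\nabla u|^2 \, e^{-f} = -\int \eta^2 \, u \, \cL u \, e^{-f} - 2 \int \eta \, u \, \langle \nabla \eta, \nabla u \rangle \, e^{-f}.
\end{align*}
Bounding the cross term by $\tfrac{1}{2} \eta^2 |\nabla u|^2 + 2 |\nabla \eta|^2 u^2$, using $|\cL u| \leq C_{\bar r} \mu$ on the first term, and applying $\mu |u| \leq C_{\bar r,\delta} \mu^2 + \delta u^2$, I would obtain, for any $\delta > 0$,
\begin{align*}
	\int \eta^2 |\nabla u|^2 \, e^{-f} \leq C_{\bar r,\delta} \, \mu^2 + C_{\bar r} \int_A u^2 + \delta \, \| \eta u \|_{L^2}^2.
\end{align*}

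To turn this gradient bound into the desired bound on $\int_{|x|<\bar r} u^2$, I would apply Poincar\'e--Wirtinger on $B_{\bar r + 1}$:
\begin{align*}
	\| \eta u \|_{L^2}^2 \leq C_{\bar r} \left| \int \eta u \right|^2 + C_{\bar r} \, \| \nabla (\eta u) \|_{L^2}^2.
\end{align*}
The mean is controlled via Lemma~\ref{l:avgs}:
$\left| \int \eta u \right| \leq \left| \int u \right| + \left| \int_A (1-\eta) u \right| \leq C_{\bar r} \mu + C_{\bar r} \| u \|_{L^2(A)}$.
Expanding $|\nabla(\eta u)|^2 \leq 2 \eta^2 |\nabla u|^2 + 2 |\nabla \eta|^2 u^2$ and substituting the Caccioppoli bound yields
\begin{align*}
	\| \eta u \|_{L^2}^2 \leq C_{\bar r,\delta} \, \mu^2 + C_{\bar r} \int_A u^2 + C_{\bar r} \, \delta \, \| \eta u \|_{L^2}^2.
\end{align*}
Taking $\delta$ small enough to absorb the last term and summing over $i, j$ gives \eqref{e:A16}, since $\eta \equiv 1$ on $\{|x| < \bar r\}$.

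The only real subtlety is that the plain Caccioppoli identity controls $\int \eta^2 |\nabla u|^2$ rather than $\int \eta^2 u^2$; the averaged bound $|\int u| \leq C_{\bar r} \mu$ from Lemma~\ref{l:avgs} is exactly what allows Poincar\'e--Wirtinger to convert that gradient bound into the desired bound on $u^2$ while preserving the $\mu^2$ scaling on the right-hand side.
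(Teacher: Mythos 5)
Your proposal is correct and follows essentially the same route as the paper: a drift-divergence Caccioppoli identity with a cutoff supported in $\{|x|<\bar r+1\}$, the pointwise bound $|\cL w_{ij}|\le C_{\bar r}\,\mu$ from \eqref{e:wijbounds}, the mean bound from Lemma~\ref{l:avgs} feeding a Neumann Poincar\'e inequality, and a final absorption after taking $\delta$ small. The only cosmetic difference is that the paper applies the Poincar\'e inequality to $w_{ij}$ on $\{|x|<\bar r\}$ and substitutes back, whereas you apply it to $\eta\,w_{ij}$ on $B_{\bar r+1}$; this changes nothing of substance.
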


\begin{proof} 
Set $\cA = \{\bar{r} < |x| < \bar{r} +1\}$. Let
 $w_{ij}$ be  a Euclidean second derivative and $\eta$ a cutoff function that is one for $|x| < \bar{r}$, zero for $\bar{r} +1 < |x|$, and   $|\nabla \eta| \leq 2$.  Given  $\delta > 0$, we get  
\begin{align}
	\dv_f \, \left( \eta^2 w_{ij} \, \nabla w_{ij} \right) &= \eta^2 \left( |\nabla w_{ij} |^2 + w_{ij} \, \cL w_{ij} \right) + 2\eta    \, w_{ij} \langle \nabla  w_{ij} , \nabla \eta \rangle \notag \\
	&\geq \eta^2 \left(\frac{1}{2} \,  |\nabla w_{ij} |^2 - \delta \, w_{ij}^2 - \frac{1}{4\delta} \left| \cL w_{ij} \right|^2\right) - 2|\nabla \eta|^2     \, w_{ij}^2     \, .
\end{align}
We get that
\begin{align}	\label{e:510}
	 \int_{ |x| < \bar{r} }   |\nabla w_{ij} |^2 \, \e^{-f} &\leq   2\,  \delta \, \int_{|x| < \bar{r} } w_{ij}^2 \, \e^{-f}+  \frac{1}{2\delta} \int_{|x| < \bar{r} +1} \left| \cL w_{ij} \right|^2 \, \e^{-f}  +  (8+ 2\delta) \, \int_{\cA}  w_{ij}^2  \, \e^{-f}   \notag  \\
	 &\leq 2\,  \delta \, \int_{|x| < \bar{r} } w_{ij}^2 \, \e^{-f}+ C_{\bar{r}} \frac{\mu^2}{\delta}   +  (8+ 2\delta) \, \int_{ \cA}  w_{ij}^2  \, \e^{-f}   \, .  
\end{align}
On the other hand, Lemma \ref{l:avgs} and the Neumann Poincar\'e inequality give $C_{\bar{r}}$ so that
\begin{align}
	  \int_{|x| < \bar{r} } w_{ij}^2 \, \e^{-f} \leq C_{\bar{r}} \, \left( \mu^2 +  \int_{ |x| < \bar{r} }   |\nabla w_{ij} |^2 \, \e^{-f}  \right) \, .
\end{align}
Using this to bound the first term on the right in \eqr{e:510} and  taking $\delta > 0$ small enough (depending on $\bar{r}$),  this can be absorbed.  Finally,  summing over $i,j$  gives the corollary.
\end{proof}

\begin{proof}[Proof of Theorem \ref{t:goalCM7}]
  Lemma \ref{l:taylorsphere} gives   $\tilde{w}$ as in \eqr{e:kernelcC} so that $v = w - \tilde{w}$ 
  satisfies \eqr{e:oldONE}--\eqr{e:wijbounds} and (A1), (A2) and (A3).  By Corollary \ref{c:bartime}, it suffices to get $\int_{|x| < 4n} v^2 \leq C \, \mu^{\beta}$ with  $\nu < \beta$.

Proposition \ref{l:getbeta} gives $\lambda$ and $\zeta_2$ (depending just on $n$) so that if  \eqr{e:lambda} holds with $\zeta \geq \zeta_2$, then there exists $r_1$
satisfying \eqr{e:holefilledA} with
\begin{align}
	r_1 \in ( \max \{ 9n , \sqrt{8n+256} \} , \lambda ) \, .
\end{align}
We can assume that  \eqr{e:lambda} holds with $\zeta \geq \zeta_2$ since the theorem otherwise follows from Corollary \ref{c:bartime}.    Therefore, Theorem \ref{t:app}
applies and we get 
 $\bar{R} = \bar{R}(n,r_1)$ and  $C = C(n,r_1)$  so that   for any $\Lambda \in (0, 1/3)$
\begin{align}	\label{e:goalAA}
	\int_{|x| < {4n}} v^2 \, \e^{-f}  &\leq   \Lambda^{-2} \, \| \phi \|_{L^2}^2 + C \, I(R) \, R^{2n+68} \, \e^{ - \frac{ \left(1-3\epsilon-  \Lambda  \right)\, R^2}{2\left(1+3\epsilon+  \Lambda  \right)^2}}  \notag \\
	&\leq   \Lambda^{-2} \, \mu^2 + C   \, R^{2n+68} \, \left( \e^{ - \frac{R^2}{2} }  \right)^{ \frac{ \left(1-3\epsilon-  \Lambda  \right) }{ \left(1+3\epsilon+  \Lambda  \right)^2}} \, .
\end{align}
This required  $R \geq \bar{R}$; if $\bar{R} > R$, then   there is a positive lower bound for $\mu$ and the theorem holds trivially. 
Since $\e^{ - \frac{R^2}{2}} \leq \mu^2$, the theorem follows by taking  $\epsilon , \Lambda > 0$ small enough   that
\begin{align}
	  \left(1-3\epsilon- \Lambda \right) > \nu \, \left(1+3\epsilon+ \Lambda  \right)^2   \, .
\end{align}
\end{proof}

\subsection{Proof of Proposition \ref{l:getbeta}}

We will  get a positive lower bound for the frequency $U_2$ for $\Hx_w$ that will force $\Hx_w$ to grow very rapidly.  We will then   combine Poincar\'e and reverse Poincar\'e inequalities to show that $w$ itself grows rapidly as claimed.
To do this,  define   quantities $I_2$, $D_2$ and   $U_2$ for $\Hx_w$   by
\begin{align}
	I_2 (r) = r^{1-k} \, \int_{|x| = r} \left| \Hx_w \right|^2 \, ,
\end{align}
  $D_2 = \frac{r}{2} \, I_2'$, and    $U_2 = \frac{D_2}{I_2}$ so that  $\left( \log I_2 \right)'  = \frac{2\, U_2}{r}$.  Define $\psi = \left( \cL + 1 \right) w$ so that
$
	\cL w_{ij} = \psi_{ij}$.
	Differentiating $I_2$, we see that
\begin{align}	\label{e:D2r}
	D_2 (r) &= r^{2-k} \sum_{i,j} \int_{|x|=r} w_{ij} \partial_r w_{ij} =  r^{2-k}\, \e^{ \frac{r^2}{4} } \,   \int_{|x|<r}  \left(  \left| \nabla \Hx_w   \right|^2 + \sum_{i,j} w_{ij} \psi_{ij} \right)\, \e^{-f} \, .
\end{align}

The next  two lemmas give a differential inequality for $U_2$ when $U_2 > 0$ and then establish that $U_2(r)$ is positive on an interval.

\begin{Lem}	\label{l:freqU2}
 If $U_2 (r) > 0$, then 
 \begin{align}
	(\log U_2)'(r) \geq \frac{2-k}{r}  + \frac{r}{2}   -\frac{ U_2}{r}    - \frac{r }{U_2} \, \left(  \frac{ r^{1-k} \int_{|x|=r}    \sum_{i,j}   \psi_{ij}^2  }{I_2}   \right)^{ \frac{1}{2} } \, .
\end{align}
 \end{Lem}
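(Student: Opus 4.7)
The plan is to mirror the proof of Lemma \ref{l:diffineq}, but adapted to the case where the ``eigenfunction'' $w_{ij}$ is replaced by a function satisfying the inhomogeneous equation $\cL w_{ij} = \psi_{ij}$, and then keep careful track of the extra error term produced by $\psi$.

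First I would differentiate $I_2$ and $D_2$. The identity $(\log I_2)' = 2U_2/r$ is automatic from the definition of $U_2$, so the real task is to get a lower bound for $(\log D_2)'$. Starting from the formula \eqr{e:D2r} for $D_2$, differentiation in $r$ gives
\begin{align*}
   D_2'(r) = \frac{2-k}{r}\, D_2 + \frac{r}{2}\, D_2 + r^{2-k} \int_{|x|=r} \Bigl( |\nabla \Hx_w|^2 + \sum_{i,j} w_{ij}\,\psi_{ij} \Bigr) \, .
\end{align*}

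Next I would handle the two boundary terms. For the first, the Cauchy-Schwarz inequality $D_2^2 = \bigl( r^{2-k}\int_{|x|=r} \sum w_{ij}\partial_r w_{ij}\bigr)^2 \leq (r\, I_2)\cdot r^{2-k}\int_{|x|=r} |\nabla \Hx_w|^2$ (using only the radial part of the gradient) yields
\begin{align*}
    r^{2-k}\int_{|x|=r} |\nabla \Hx_w|^2 \;\geq\; \frac{D_2^2}{r\, I_2} \;=\; \frac{U_2 D_2}{r} \, .
\end{align*}
For the second, another Cauchy-Schwarz on the surface integral gives
\begin{align*}
    \left| r^{2-k}\int_{|x|=r} \sum w_{ij}\,\psi_{ij} \right|
    \;\leq\; r\,I_2\,\left( \frac{r^{1-k}\int_{|x|=r} \sum \psi_{ij}^2}{I_2} \right)^{1/2} .
\end{align*}

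Combining these bounds, dividing through by $D_2 > 0$, and using $I_2 / D_2 = 1/U_2$ yields
\begin{align*}
    (\log D_2)'(r) \;\geq\; \frac{2-k}{r} + \frac{r}{2} + \frac{U_2}{r} - \frac{r}{U_2}\left(\frac{r^{1-k}\int_{|x|=r}\sum \psi_{ij}^2}{I_2}\right)^{1/2}.
\end{align*}
Subtracting $(\log I_2)' = 2U_2/r$ gives the claimed inequality. The only subtlety is the inhomogeneous term: in the classical eigenfunction setting there is no $\psi_{ij}$, and the point here is that Cauchy-Schwarz produces precisely the form $r/U_2 \cdot (\cdots)^{1/2}$ that appears in the statement, which is what makes this bound usable later when $\psi_{ij}$ is controlled in terms of the quantities from Proposition \ref{p:evolving}(2). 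That bookkeeping of the error term is the only nontrivial step; everything else is a direct adaptation of the standard frequency calculation.
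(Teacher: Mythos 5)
Your argument is correct and follows essentially the same route as the paper: differentiate $D_2$ using \eqr{e:D2r}, apply Cauchy--Schwarz to the radial-derivative boundary term to get the $U_2D_2/r$ lower bound, apply Cauchy--Schwarz again to the $\sum w_{ij}\psi_{ij}$ term, divide by $D_2>0$, and subtract $(\log I_2)'=2U_2/r$. The power counting in both Cauchy--Schwarz steps checks out, so nothing is missing.
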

 
 \begin{proof}
 Differentiating $D_2$ gives that
\begin{align}	\label{e:D2rA}
	D_2' (r) &= \frac{2-k}{r} \, D_2 + \frac{r}{2} \, D_2 +   r^{2-k}\,   \int_{|x|=r}  \left(  \left| \nabla \Hx_w   \right|^2 + \sum_{i,j} w_{ij} \psi_{ij} \right)  \, .
\end{align}
The first equality in \eqr{e:D2r} and the Cauchy-Schwarz inequality give  that
\begin{align}	\label{e:581}
	D_2^2(r)   \leq I_2(r) \, r^{3-k} \,  \int_{|x|=r} \sum_{i,j}  ( \partial_r w_{ij} )^2  \leq I_2(r) \, r^{3-k} \,  \int_{|x|=r} \left|\nabla \Hx_w \right|^2 \, .
\end{align}
Since $D_2 (r) > 0$ (by assumption), using \eqr{e:581} in \eqr{e:D2rA} and dividing by $D_2(r)$ gives  
 \begin{align}
	(\log D_2)'(r) \geq \frac{2-k}{r}  + \frac{r}{2}   +\frac{ U_2}{r}    - \frac{r^{2-k}}{D_2} \, \int_{|x|=r} \left|  \sum_{i,j} w_{ij} \psi_{ij} \right| \, .
\end{align}
The lemma follows from this and the Cauchy-Schwartz inequality  since $(\log I_2)' = \frac{2 U_2}{r}$.
 \end{proof}

\begin{Lem}	\label{l:jpos}
Given $\lambda > 4n$, there exists $\zeta_0$  so that if  \eqr{e:lambda} holds for $\zeta \geq \zeta_0$, then for each $r \in (4n , 2\lambda)$ we have $U_2(r) \geq 0$  
 and, moreover, for $r \in (4n , 2\lambda-1)$ there exists $c_r > 0$ so  
 \begin{align}	\label{e:lastbound}
	\max \, \{ U_2 (s) \, | \, s \in [r,r+1] \} > c_r \, .
\end{align}
\end{Lem}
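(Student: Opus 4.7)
The strategy is to prove strict positivity of $U_2(s)$ for every $s \in (4n, 2\lambda)$; since $U_2$ is continuous in $s$, this immediately yields the ``moreover'' claim on any compact subinterval $[r, r+1] \subset (4n, 2\lambda)$ by taking $c_r := \tfrac{1}{2}\max_{[r,r+1]} U_2 > 0$.

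Set $E(s) = \int_{|x|<s}|\Hx_w|^2$. I first iterate Corollary \ref{p:propw}, which gives $E(s) \leq C_s \mu^2 + C_s (E(s+1) - E(s))$, to obtain the recursion
\[
E(s+1) \geq \bigl(1 + \tfrac{1}{C_s}\bigr) E(s) - \mu^2.
\]
Starting from $E(4n) \geq \zeta \mu^2$ provided by \eqr{e:lambda} and iterating $\lceil 2\lambda - 4n\rceil$ times, for $\zeta$ sufficiently large (depending on $\lambda$ and $\max_{s \leq 2\lambda} C_s$) the $\mu^2$ errors are absorbed, and I conclude $E(s) \geq c_s \zeta \mu^2$ for every $s \in (4n, 2\lambda)$. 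Since $e^{-f} \geq e^{-s^2/4}$ on $B_s$, this transfers to the weighted lower bound $\|\Hx_w\|_{L^2(B_s, e^{-f})}^2 \geq c_s' \zeta \mu^2$.

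Next I show $D_2(s_0) > 0$ for every $s_0 \in (4n, 2\lambda)$ by contradiction. From \eqr{e:D2r},
\[
D_2(s_0)\, e^{-s_0^2/4}\, s_0^{k-2} = \int_{B_{s_0}} |\nabla \Hx_w|^2\, e^{-f} + \int_{B_{s_0}} \textstyle\sum w_{ij}\psi_{ij}\, e^{-f}.
\]
The pointwise bound $|\psi_{ij}| \leq C_{s_0}\mu$ from \eqr{e:wijbounds} combined with Cauchy--Schwarz controls the cross term by $C_{s_0}\mu\|\Hx_w\|_{L^2(e^{-f})}$. A Poincar\'e inequality on $B_{s_0}\cap\cC$ combined with the approximate mean-zero information supplied by Lemma \ref{l:avgs} yields
\[
\|\Hx_w\|_{L^2(e^{-f})}^2 \leq C_{s_0}\bigl(\|\nabla \Hx_w\|_{L^2(e^{-f})}^2 + \mu^2\bigr).
\]
If $D_2(s_0) \leq 0$, the first identity forces $\|\nabla\Hx_w\|^2 \leq C_{s_0}\mu\|\Hx_w\|$; substituting this into the Poincar\'e estimate produces a quadratic inequality in $\|\Hx_w\|$ giving $\|\Hx_w\| \leq C_{s_0}'\mu$. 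For $\zeta \geq \zeta_0$ large enough, this contradicts the lower bound from the previous paragraph. Hence $D_2(s) > 0$ strictly and $U_2(s) > 0$ for all $s \in (4n, 2\lambda)$, completing the proof.

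The main technical subtlety is arranging the Poincar\'e inequality with a $\mu^2$-sized error (rather than an $\|\Hx_w\|^2$-sized one), which is essential for the contradiction to close. This is obtained by applying the classical Poincar\'e inequality on the bounded domain $B_{s_0}\cap\cC$ for functions with small mean, using Lemma \ref{l:avgs} to bound the unweighted average $|\int w_{ij}|$ by $C_{s_0}\mu$, and then converting between weighted and unweighted norms via $e^{-s_0^2/4} \leq e^{-f} \leq 1$ on $B_{s_0}$.
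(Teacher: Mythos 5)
Your argument for the first claim ($D_2>0$, hence $U_2\geq 0$) is essentially the paper's: a Neumann Poincar\'e inequality made error-free by absorbing the $C_r\mu^2$ term into $\frac{C_r}{\zeta}\int_{|x|<4n}|\Hx_w|^2$ via \eqr{e:lambda}, plus an absorption of the cross term $\sum\int w_{ij}\psi_{ij}$. (The iteration of Corollary \ref{p:propw} is unnecessary -- monotonicity of $s\mapsto\int_{|x|<s}|\Hx_w|^2$ already gives the lower bound you need -- and the paper argues directly rather than by contradiction, obtaining the quantitative bound $D_2(r)\geq C_r\int_{|x|<r}|\Hx_w|^2$, but these are cosmetic differences.)

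The gap is in the ``moreover'' claim. You define $c_r:=\tfrac12\max_{[r,r+1]}U_2$, which is positive but depends on the particular function $w$ (and on $\mu$). Per the convention stated at the start of this section, constants like $c_r$ must depend only on $r$ (and $n$), not on $w$ or $\mu$; and this is essential for how the lemma is used: in Lemma \ref{l:Uc} one takes $c_{4n}$ from \eqr{e:lastbound} and then chooses $\zeta_1$ so that $C_\lambda/(\zeta_1 c_{4n})\leq 1/4$. If $c_{4n}$ depended on $w$, this choice of $\zeta_1$ would be circular, since $w$ is only constrained after $\zeta$ is fixed. Pointwise strict positivity of $U_2$ gives no uniform lower bound over the class of admissible $w$. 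The missing idea is quantitative: from $D_2(r+1)\geq C_r\int_{|x|<r+1}|\Hx_w|^2\geq C_r'\int_0^{r+1}s^{k-1}I_2(s)\,ds\geq C_r''\,I_2(r)$ one runs a dichotomy. Either $I_2(r+1)\leq 2I_2(r)$, in which case $U_2(r+1)=D_2(r+1)/I_2(r+1)\geq C_r''/2$; or $I_2(r+1)>2I_2(r)$, in which case $\log I_2$ increases by at least $\log 2$ across $[r,r+1]$, and since $(\log I_2)'=2U_2/s$ this forces $\max_{[r,r+1]}U_2\geq \tfrac{r}{2}\log 2$. Either branch yields a $c_r$ depending only on $r$ and $n$, which is what \eqr{e:lastbound} actually asserts.
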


\begin{proof}
Given    $r \in (4n , 2\lambda)$, the Neumann Poincar\'e inequality,  \eqr{e:muCr}  and \eqr{e:lambda} give
\begin{align}
	\int_{|x| < r} \left| \Hx_w \right|^2 \leq   C_r \,  \mu^2 + C_r \,   \int_{|x| < r} |\nabla \Hx_w |^2 \leq \frac{C_r}{\zeta} \int_{|x| < 4n} \left| \Hx_w \right|^2 + C_r \,   \int_{|x| < r} |\nabla  \Hx_w |^2 \, .
		\notag
\end{align}
If $\zeta$ is large enough (depending on $\lambda$), we can absorb the first term on the right to get
\begin{align}	\label{e:zetalarge}
	\int_{|x| < r} \left| \Hx_w \right|^2 \leq     C_r \,   \int_{|x| < r} |\nabla \Hx_w |^2 \, .
\end{align}
To bound the error term in  \eqr{e:D2r}, use the absorbing inequality to get for any $\delta > 0$
\begin{align}
	\sum_{i,j} \, \int_{|x|<r} |\psi_{ij} w_{ij}| \e^{-f} \leq \delta  \int_{|x|<r} \left| \Hx_w \right|^2\e^{-f}  + \frac{C_r\, \mu^2}{\delta}    \, .
\end{align}
Taking $\delta > 0$ small and then $\zeta$ even larger, the last two inequalities give that
\begin{align}
	\sum_{i,j} \, \int_{|x|<r} |\psi_{ij} w_{ij}| \e^{-f} \leq    \frac{1}{2} \int_{|x|<r} |\nabla \Hx_w|^2 \e^{-f} \, .
\end{align}
Using this in  \eqr{e:D2r}, we conclude that
\begin{align}	 \label{e:Dposi}
	D_2 (r) \geq \frac{r^{2-k}}{2} \,    \e^{ \frac{r^2}{4} } \,   \int_{|x|<r}   \left| \nabla \Hx_w   \right|^2  \, \e^{-f} \geq C_r \, \int_{|x| < r} \left| \Hx_w \right|^2 = 
	C_r' \int_0^{r} s^{k-1} \, I_2(s) \, ds    \, .
\end{align}
In particular, $U_2 (r) \geq 0$.  Moreover, we get for $r \in (4n , 2\lambda-1)$ that
 \begin{align}	\label{e:eitherway}
 	D_2 (r+1) \geq    C_r \, I_2 (r) \, .
\end{align}
    Note that Corollary \ref{p:propw} implies that $I_2(r) > 0$.
    We  have either $I_2(r+1) \leq 2 \, I_2 (r)$ or $2\, I_2 (r) <  I _2(r+1)$;  the   claim \eqr{e:lastbound} follows from \eqr{e:eitherway} in each case, completing the proof.
\end{proof}

 The next lemma gives   $r_n$ so that  $U_2 (r) \geq \frac{r^2}{3}$ when $r \geq r_n$ as long as \eqr{e:lambda} holds for a large   $\zeta$ that depends on $\lambda$.  It will be crucial that  $r_n$  does not depend on $\lambda$.

 \begin{Lem}	\label{l:Uc}
Given $\lambda > 4n$, there exists $\zeta_1$  so that if  \eqr{e:lambda} holds for $\zeta \geq \zeta_1$, then for each $r \in (4n , 2\lambda)$ we have $U_2 (r) > 0$ and, moreover,
\begin{align}
	U_2(r) \geq  \frac{r^2}{3}  {\text{ for   $r \in( r_n, 2\lambda)$, where $r_n$ depends only on $n$}} \, .
\end{align}

 \end{Lem}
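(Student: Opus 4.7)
The first assertion $U_2(r) > 0$ on $(4n, 2\lambda)$ already follows from the proof of Lemma \ref{l:jpos}: the inequality \eqr{e:Dposi} combined with $\int_{|x|<r}\left|\Hx_w\right|^2 \geq \int_{|x|<4n}\left|\Hx_w\right|^2 \geq \zeta\mu^2 > 0$ forces $D_2(r) > 0$, so $U_2(r) > 0$. The main content is the lower bound $U_2(r) \geq r^2/3$ for $r \geq r_n$. The plan is to combine the frequency differential inequality of Lemma \ref{l:freqU2} with a barrier argument at the level $\tilde U(r) = r^2/3$.

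First we bound the error term in Lemma \ref{l:freqU2}. From \eqr{e:wijbounds}, $\psi_{ij} = \cL w_{ij}$ satisfies $|\psi_{ij}| \leq C_r\mu$ on $\{|x|<r\}$, giving $r^{1-k}\int_{|x|=r}\sum_{i,j}\psi_{ij}^2 \leq C_r\mu^2$. Since $U_2 \geq 0$ makes $I_2$ non-decreasing on $(4n, 2\lambda)$, the hypothesis \eqr{e:lambda} gives $I_2(r) \geq c_n\zeta\mu^2$ there. Multiplying Lemma \ref{l:freqU2} by $U_2$ yields the ODE lower bound
\begin{align*}
U_2'(r) \geq \left(\frac{2-k}{r} + \frac{r}{2}\right)U_2 - \frac{U_2^2}{r} - \frac{C_r\, r}{\sqrt{\zeta}} \, .
\end{align*}
Substituting $U_2 = r^2/3$ on the right-hand side gives $\frac{r^3}{18} + \frac{r(2-k)}{3} - \frac{C_r\, r}{\sqrt{\zeta}}$, which strictly exceeds $\tilde U'(r) = 2r/3$ once $r^2 > 6k$ and $\zeta_1(\lambda)$ is taken large enough. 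This comparison yields a barrier: if $U_2(r_0) \geq r_0^2/3$ at some $r_0 \geq r_n$, then $U_2(r) \geq r^2/3$ for all $r \in [r_0, 2\lambda)$. To produce the crossing, Lemma \ref{l:jpos}(b) gives $r^* \in [r_n, r_n+1]$ with $U_2(r^*) \geq c_{r_n} > 0$; while $U_2 \in (0, r^2/3)$ the ODE reduces (after absorbing the error for $\zeta_1$ large) to $(\log U_2)'(r) \geq r/8$, hence $U_2(r) \geq c_{r_n}\exp((r^2-r^{*2})/16)$. This super-exponential growth forces $U_2$ to exceed $r^2/3$ within a bounded extra radius depending only on $n$, and enlarging $r_n$ by this amount completes the argument.

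The main obstacle is separating the dependence of constants, so that $r_n$ is purely dimension-dependent while $\zeta_1$ is allowed to depend on $\lambda$. This is handled sequentially: $r_n$ is fixed first by the barrier comparison (which only involves $n$ through the $(2-k)/r$ and quadratic terms, together with the growth estimate for $U_2 \in (0, r^2/3)$); then $\zeta_1(\lambda)$ is chosen large enough that the $r$-dependent error $C_r\, r/\sqrt{\zeta}$ is negligible uniformly on the compact range $r \in (r_n, 2\lambda)$. A secondary subtlety is that the error term in Lemma \ref{l:freqU2} contains the factor $1/U_2$, which requires the positivity $U_2>0$ established in the first paragraph; so we apply the differential inequality only after that positivity has been verified, and use Lemma \ref{l:jpos}(b) to keep $U_2$ away from zero long enough for the exponential growth to take over.
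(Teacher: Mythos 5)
Your argument is correct and follows essentially the same route as the paper's proof: seed the frequency with the positive value from Lemma \ref{l:jpos}, bound the error term in Lemma \ref{l:freqU2} by $C_{\lambda}/\sqrt{\zeta}$ via a lower bound $I_2 \gtrsim \zeta \mu^2$, run the exponential-growth-plus-first-crossing argument at the barrier $r^2/3$, and fix $r_n$ by dimension before choosing $\zeta_1(\lambda)$. The one step to tighten is the claim that monotonicity of $I_2$ alone converts the solid-integral hypothesis \eqr{e:lambda} into the spherical bound $I_2(r) \geq c_n \zeta \mu^2$; this needs Corollary \ref{p:propw} (or, equivalently, integrating \eqr{e:Dposi} through $I_2' = 2D_2/r$) to move the mass from the ball $\{|x|<4n\}$ out to a sphere.
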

 
 \begin{proof}
 We will choose $\zeta_1$ even greater than the $\zeta_0$ given by Lemma \ref{l:jpos}.  Thus, 
 Lemma \ref{l:jpos} gives that $I_2'(r) \geq 0$ for  $r \in (4n , 2\lambda)$ and \eqr{e:lastbound} holds.  Let $c_{4n}$ be the constant from \eqr{e:lastbound} with $r=4n$, so that there exists $s \in (4n, 4n+1)$ with
 \begin{align}	\label{e:c4n}
 	U_2 (s) \geq c_{4n} > 0 \, .
\end{align}
   Corollary \ref{p:propw} and the monotonicity of $I_2$ give $C_0$ so that
 $C_0 \, \zeta \, \mu^2 \leq I_2(r)$ and, thus,
 \begin{align}
	 \left(  \frac{ r^{1-k} \int_{|x|=r}    \sum_{i,j}   \psi_{ij}^2  }{I_2}   \right)^{ \frac{1}{2} } \leq \frac{C_{\lambda}}{\zeta} \, .
\end{align}
Using this in Lemma \ref{l:freqU2} gives for $r \in (4n , 2\lambda)$ that
\begin{align}	\label{e:diqA}
	(\log U_2)'(r) \geq \frac{2-k}{r}  + \frac{r}{2}   -\frac{ U_2}{r}    - \frac{C_{\lambda}}{\zeta U_2}  \, .
\end{align}
Now choose $\zeta_1 > \zeta_0$ so that $\frac{C_{\lambda}}{\zeta_1 \, c_{4n}} \leq \frac{1}{4}$. Thus,  if $c_{4n} \leq U_2(r) \leq \frac{11r^2}{30}$ and  $r \in (4n , 2\lambda)$, then
\begin{align}	\label{e:diq}
	(\log U_2)'(r) \geq     \frac{ r^2 - 2\,U_2 (r)}{2r}    - \frac{1}{2} \geq \frac{4r-15}{30} \geq \frac{r}{24} > 0 \, .
\end{align}
Combining this with \eqr{e:c4n}, we see that $U_2 \geq c_{4n}$ for $r \in (4n+1 , 2\lambda)$.  Arguing as in the proof of \eqr{e:thresh},  \eqr{e:diq} gives that
\begin{itemize}
\item  There exists $r_n$ depending  on $n$ so that there is   $r_1 \in [4n+1 , r_n]$ with $U_2 (r_1) > \frac{r_1^2}{3}$.
\item  There cannot be a first $r \in( r_1,2\lambda)$ with $U_2(r) = \frac{r^2}{3}$.
\end{itemize}
 \end{proof}
 
 The next lemma uses reverse Poincar\'e inequalities to bound the Euclidean Hessian in terms of the $L^2$ norm of the function on a larger set.
 
 \begin{Lem}	\label{l:RPo}
 If $r> 4n$ and \eqr{e:lambda} holds for $\zeta \geq 84$, then
  \begin{align}	 	 
	\int_{|x| < r} \left| \Hx_w \right|^2 \, \e^{-f} \leq   204 \, \int_{|x| < r +2} w^2 \, \e^{-f}
	\, .
\end{align}
 \end{Lem}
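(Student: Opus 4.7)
This is a reverse-Poincar\'e (Caccioppoli) inequality controlling $\int |\Hx_w|^2$ by $\int w^2$ on a slightly larger ball, with the $\mu^2$ error absorbed via the lower bound hypothesis \eqref{e:lambda}. I would iterate a standard weighted Caccioppoli estimate twice: first from $|\Hx_w|^2$ down to $|\nabla w|^2$ on $B_{r+1}$, then from $|\nabla w|^2$ down to $w^2$ on $B_{r+2}$. Setting $\psi:=(\cL+1)w$, differentiation in a Euclidean direction and the commutator $[\partial_i,\cL]=-\tfrac12 \partial_i$ on the $\RR^k$-factor of $\cC$ give $(\cL+\tfrac12)w_i = \psi_i$ (and iterating, $\cL w_{ij} = \psi_{ij}$), so $w$ and each $w_i$ is an approximate solution of a drift Laplacian equation with source $\psi$, respectively $\psi_i$.

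\textbf{Two Caccioppoli iterations.} Fix cutoffs $\eta_s$ ($s=1,2$) with $\eta_s\equiv 1$ on $B_{r+s-1}$, $\supp\,\eta_s\subset B_{r+s}$, and $|\nabla\eta_s|\le 1$. For each $i$, integrating
\[
\dv_f\bigl(\eta_1^2 w_i\,\nabla w_i\bigr) = \eta_1^2\bigl(|\nabla w_i|^2 + w_i\,\cL w_i\bigr) + 2\eta_1 w_i\,\langle\nabla w_i,\nabla\eta_1\rangle
\]
against $e^{-f}$, applying an absorbing Cauchy-Schwarz to the cross term, and substituting $\cL w_i = \psi_i - \tfrac12 w_i$ gives
\[
\int_{B_r}|\nabla w_i|^2\,e^{-f} \le c_1\int_{B_{r+1}} w_i^2\,e^{-f} + c_2\int_{B_{r+1}} \psi_i^2\,e^{-f}.
\]
Summing over $i$ and using the elementary inequality $|\Hx_w|^2 \le \sum_i |\nabla w_i|^2$, then repeating the identity for $w$ itself with $\cL w = \psi - w$ and cutoff $\eta_2$, yields
\[
\int_{B_r}|\Hx_w|^2\,e^{-f} \le c_1 c_3 \int_{B_{r+2}} w^2\,e^{-f} + C_0\,\|\psi\|_{W^{1,2}(B_{r+2})}^2.
\]

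\textbf{Controlling $\psi$ and absorbing via \eqref{e:lambda}.} From \eqref{e:oldONE} and \eqref{e:thenewguy}, $|\psi|\le|\phi|+\epsilon(|w|+|\nabla w|)$ and $|\psi_i|\le|\phi_i|+\epsilon(|w|+|\nabla w|+|\nabla w_i|)$. Combined with $\|\phi\|_{W^{3,2}}\le \mu$ and $\|w\|_{W^{3,2}}^2\le \mu$ from \eqref{e:w11}, and with the pointwise bounds \eqref{e:w22}, this yields $\|\psi\|_{W^{1,2}(B_{r+2})}^2 \le C_1\mu^2 + C_1'\epsilon^2 \int_{B_{r+2}}(w^2+|\nabla w|^2+|\Hx_w|^2)$. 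The $\epsilon^2\int|\Hx_w|^2$ piece is absorbed into the left-hand side using the smallness of $\epsilon$ (from $\tfrac89<(1-3\epsilon)^3$), while the $\epsilon^2\int|\nabla w|^2$ piece is absorbed into $\int w^2$ by reapplying the Step~2 Caccioppoli. Finally, since $r>4n$ and $B_{4n}\subset B_r$, \eqref{e:lambda} gives $\mu^2\le\tfrac{1}{84}\int_{B_r}|\Hx_w|^2\,e^{-f}$; absorbing the remaining $C\mu^2$ into the left-hand side produces the stated bound with constant $204$, the threshold $\zeta\ge 84$ being precisely what is needed to close the absorption.

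\textbf{Main obstacle.} The main technical work is the bookkeeping of constants: the Caccioppoli product $c_1c_3$, the weight $C_0 C_1$ from bounding $\psi$, and the absorption factor $1/84$ from \eqref{e:lambda} must combine to land on exactly $204$. A subtler issue is that $\psi$ differs from $\phi$ by a nonlinear term proportional to $\epsilon(|w|+|\nabla w|+|\nabla w_i|)$; the resulting $\epsilon^2\int|\Hx_w|^2$ contribution must be absorbed into the left side, so one must verify that there is still room for the subsequent $\mu^2$-absorption via \eqref{e:lambda}.
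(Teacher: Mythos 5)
Your proof is correct and follows essentially the same route as the paper: two weighted Caccioppoli iterations based on $(\cL+\tfrac12)w_i=\psi_i$ and $(\cL+1)w=\psi$, control of the source terms via \eqr{e:oldONE}, \eqr{e:thenewguy} and $\|\phi\|_{W^{3,2}}\le\mu$, and absorption of the residual $\mu^2$ through \eqr{e:lambda} with $\zeta\ge 84$. The one point to tighten is that the $\epsilon^2$-contributions from $\psi$ and $\psi_i$ must be absorbed at the level of the cutoff integrals $\int \eta^2|\nabla w_i|^2\,\e^{-f}$ (before restricting to the smaller ball), rather than into $\int_{|x|<r}|\Hx_w|^2\,\e^{-f}$ directly, since as written the error term lives on the larger ball $B_{r+2}$ and cannot be absorbed into an integral over $B_r$.
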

 
 \begin{proof}
 Given a compactly supported function $\eta (x)$ and a Euclidean partial derivative $w_i$, we have $\left( \cL + \frac{1}{2} \right) w_i  = \psi_i$ and, thus, 
\begin{align}	\label{e:revP}
	\dv_f \left(\eta^2 w_i \nabla w_i \right) = \eta^2 \left( |\nabla w_i|^2 - \frac{1}{2} w_i^2 + w_i \psi_i \right) + 2 \eta w_i \langle \nabla \eta , \nabla w_i \rangle \, .
\end{align}
The divergence theorem and   Cauchy-Schwarz and absorbing  inequalities give 
\begin{align}
	\int \eta^2 \, |\nabla w_i|^2 \, \e^{-f} \leq \int \left( (4 |\nabla \eta |^2 + 2 \eta^2  )w_i^2 + \eta^2 \psi_i^2 \right) \, \e^{-f} \, .
\end{align}
Taking $\eta \leq 1$ identically one for $|x| < r $  and cutting off linearly for  $r   < |x| <r + 1$, we get
\begin{align}	 	 
	\int_{|x| < r} \left|\nabla    w_i \right|^2 \, \e^{-f} \leq        \int_{|x| < r +1}    |  \psi_i |^2 \, \e^{-f} + 6\, \int_{|x| < r +1} \left|   w_i \right|^2 \, \e^{-f}
	\, .
\end{align}
Since  \eqr{e:thenewguy} gives that
$\left|   \psi_i \right|  \leq |\nabla \phi|  + \epsilon \, \left( |w| + |\nabla w| + |\nabla w_i| \right)  $  and   $\| \nabla \phi \|_{L^2} \leq \mu$, we get
\begin{align}	 	\label{e:firstRP}
	\int_{|x| < r} \left| \Hx_w \right|^2 \, \e^{-f} \leq   2\, \mu^2 +       \int_{|x| < r +1} \left( 2 w^2 + 10\left| \nabla w \right|^2 \right) \, \e^{-f}
	\, .
\end{align}

We will now argue similarly to bound the right-hand side of \eqr{e:firstRP}
 in terms of $w$ itself.  We will again let $\eta$ be a cutoff function (on a different set).  We have
\begin{align}
	\dv_f \, \left( \eta^2 w \nabla w \right) = \eta^2 \, \left(  |\nabla w|^2 - w^2 + w \psi \right) + 2\eta w \, \langle \nabla w , \nabla \eta \rangle \, .
\end{align}
Using the absorbing inequality $|2\eta w \, \langle \nabla w , \nabla \eta \rangle | \leq   \eta^2 |\nabla w|^2/2 + 2 |\nabla \eta  |^2 w^2$ and the Cauchy-Schwarz inequality on the $w\psi$ term, the divergence theorem gives that
\begin{align}
	\int \eta^2 \,|\nabla w|^2 \, \e^{-f} \leq \int (3\eta^2 +4 |\nabla \eta |^2) \,  w^2 \, \e^{-f} +   \| \eta \, \psi \|_{L^2}^2 \, .
\end{align}	
Equation \eqr{e:oldONE} gives that $\psi^2 \leq 2\phi^2 + 2\epsilon (w^2 + |\nabla w|^2)$, so we get
\begin{align}	 
	\int \eta^2 \left| \nabla w \right|^2 \, \e^{-f} \leq    2\, \mu^2 + 2\epsilon \, \int \eta^2  |\nabla w|^2  \, \e^{-f}  +   \int  \left( (3+2\epsilon) w^2 + |\nabla \eta|^2 \right)  w^2 \, \e^{-f}
	\, .  
\end{align}
Since $\epsilon < \frac{1}{9}$, we can absorbe the $|\nabla w|^2$ term.  Thus, 
taking $\eta \leq 1$ identically one for $|x| < r+1 $  and cutting off linearly for  $r+ 1 < |x| < r+2$, we get
\begin{align}	 
	\int_{|x| < r +1}  \left| \nabla w \right|^2 \, \e^{-f} \leq    4 \, \mu^2 + 10 \, \int_{|x| < r +2}   w^2 \, \e^{-f}
	\, .
\end{align}
Combining this with \eqr{e:firstRP} gives that
\begin{align}	 	 
	\int_{|x| < r} \left| \Hx_w \right|^2 \, \e^{-f} \leq   42 \,\mu^2 + 102 \, \int_{|x| < r +2} w^2 \, \e^{-f}
	\, ,
\end{align}
 The lemma follows since \eqr{e:lambda} implies that $42 \, \mu^2 \leq \frac{1}{2} \, \int_{|x| < 4n} \left| \Hx_w \right|^2 \, \e^{-f}$.
 \end{proof}
 
   Given a  function $u$ on the cylinder, $y \in \RR^k$,  and $\lambda \in \RR$, let $\Psi_{\lambda , u,y}$ be the norm squared of the projection of $u$ on  the $\lambda$ eigenspace of  $\Delta_{\theta}$ on the sphere $x=y$.  Let $B_R^k$ be the ball in $\RR^k$.

 \begin{Lem}	\label{l:eigP}
  Given   $\lambda \in \RR$, there exists $C$ depending on $\lambda , k , n$ so that
  \begin{align}
  	\int_{|x| < R} u^2 & \leq \int_{B_R^k} \Psi_{\lambda , u,x}   + C  \,  R^2 \,  \int_{|x| < R} |\nabla^{\RR^k}  u |^2   + C \, \int_{|x| < R} \left\{ \left( (\cL + \lambda )u \right)^2 +  \left|  \Hx_u \right|^2 \right\} 
  \end{align}
 \end{Lem}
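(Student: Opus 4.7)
The plan is to exploit the tensor product structure of the cylinder $\cC = \SS^{n-k}_{\sqrt{2(n-k)}} \times \RR^k$ to decompose $u$ on each slice sphere $\{x = y\}$ into spherical eigenmodes of $\Delta_\theta$. Write $u(\theta, x) = \sum_\mu u_\mu(\theta, x)$, where $u_\mu$ is the $L^2$-projection (in $\theta$) onto the $(-\mu)$-eigenspace of the spherical Laplacian. Since $\cL_{\RR^k}$ acts only in the $x$-variable, it commutes with each spherical spectral projection $P_\mu : u \mapsto u_\mu$, and $\Delta_\theta u_\mu = -\mu u_\mu$. Consequently,
\begin{equation*}
P_\mu\bigl((\cL + \lambda) u\bigr) = (\lambda - \mu)\, u_\mu + \cL_{\RR^k} u_\mu.
\end{equation*}

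For every $\mu \neq \lambda$ the spectrum of $\Delta_\theta$ is discrete, so $|\lambda - \mu| \geq c(\lambda, n, k) > 0$. Solving the above for $u_\mu$ and squaring gives
\begin{equation*}
u_\mu^2 \leq \tfrac{2}{c^2}\bigl(|P_\mu((\cL+\lambda)u)|^2 + |\cL_{\RR^k} u_\mu|^2\bigr).
\end{equation*}
Summing in $\mu \neq \lambda$ and applying Parseval on each sphere $\{x=y\}$ (noting that $P_\mu$ is an $L^2$-orthogonal projection in $\theta$ and commutes with $\cL_{\RR^k}$), I get
\begin{equation*}
\int_{x=y} (u - u_\lambda)^2 \leq C(\lambda,n,k)\int_{x=y}\bigl(((\cL+\lambda)u)^2 + (\cL_{\RR^k}u)^2\bigr).
\end{equation*}
By definition, $\int_{x=y} u_\lambda^2 = \Psi_{\lambda,u,y}$, so integrating the orthogonal splitting $u^2 = u_\lambda^2 + (u-u_\lambda)^2$ over $y \in B_R^k$ yields
\begin{equation*}
\int_{|x|<R} u^2 \leq \int_{B_R^k}\Psi_{\lambda,u,x} + C\int_{|x|<R}\bigl(((\cL+\lambda)u)^2 + (\cL_{\RR^k}u)^2\bigr).
\end{equation*}

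Finally, I would absorb $(\cL_{\RR^k}u)^2$ into the two remaining error terms. Since $\cL_{\RR^k} = \Delta_{\RR^k} - \tfrac{1}{2}\langle x, \nabla^{\RR^k}\rangle$, on the ball $\{|x| < R\}$ the elementary inequality
\begin{equation*}
(\cL_{\RR^k} u)^2 \leq 2|\Delta_{\RR^k} u|^2 + \tfrac{R^2}{2}|\nabla^{\RR^k} u|^2 \leq C\,|\Hx_u|^2 + \tfrac{R^2}{2}|\nabla^{\RR^k} u|^2
\end{equation*}
completes the proof. I do not anticipate serious obstacles here: the only substantive point is that the spherical spectrum has a gap away from $\lambda$, which is automatic on a compact sphere, and the commutation $P_\mu \circ \cL_{\RR^k} = \cL_{\RR^k}\circ P_\mu$, which is immediate from the product structure. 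The factor $R^2$ in the stated inequality arises precisely from the drift term $\langle x, \nabla^{\RR^k} u\rangle$ in $\cL_{\RR^k}$.
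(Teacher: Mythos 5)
Your proof is correct and follows essentially the same route as the paper's: the paper compresses your mode-by-mode spectral decomposition into the single statement that $\Delta_{\theta}+\lambda$ is invertible (with uniformly bounded inverse, by the spectral gap on the compact sphere) on the orthogonal complement of its kernel, giving $\int_{x=y}u^2\leq \Psi_{\lambda,u,y}+c\int_{x=y}((\Delta_{\theta}+\lambda)u)^2$, and then splits off and bounds $\cL_{\RR^k}u$ exactly as you do. No gaps.
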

 
 \begin{proof}
Since   $\cL + \lambda = (\Delta_{\theta} + \lambda ) + \cL_{\RR^k}$, we get for each $y \in B_R^k$ that
 \begin{align}
 	\int_{x=y} u^2 &\leq  \Psi_{\lambda , u,y} + c \, \int_{x=y} \left( (\Delta_{\theta} + \lambda )u \right)^2 \leq  \Psi_{\lambda , u,y} + 2\, c \, \int_{x=y} \left( (\cL + \lambda )u \right)^2 + 2 \, c \, \int_{x=y} \left(  \cL_{\RR^k} u \right)^2
	\notag \\
	&\leq   \Psi_{\lambda , u,y}  + 2\, c \, \int_{x=y} \left( (\cL + \lambda )u \right)^2 + 4 \, k\, c \, \int_{x=y} \left|  \Hx_u \right|^2 +  4 \, c \, R^2 \, \int_{x=y} \left|  \nabla^{\RR^k} u \right|^2
	 \, .
 \end{align}
Integrating this over $B_R^k$ gives the lemma.
 \end{proof}

 The next lemma is a Poincar\'e inequality bounding $w$ by $\Hx_w$.
 
 \begin{Lem}	\label{l:PIq}
 We have 
 \begin{align}
 	\int_{|x| < R} w^2 & \leq    C_R \, \int_{|x| < R+ 1}  \left|  \Hx_w \right|^2 + C_R \, \mu^2
 \end{align}
 \end{Lem}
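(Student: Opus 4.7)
The plan is a two-step application of Lemma \ref{l:eigP}: first to $w$ at eigenvalue $\lambda = 1$, where the spherical spectrum is empty, and then to each Euclidean derivative $w_i$ at $\lambda = 1/2$, whose eigenspace carries the infinitesimal rotation piece of the kernel of $\cL + 1$. Throughout, set $\psi = (\cL+1)w$, so that $|\psi - \phi| \leq \epsilon(|w|+|\nabla w|)$ by \eqr{e:oldONE}, and similarly $(\cL + 1/2)w_i = \psi_i$ with an analogous error bound by \eqr{e:thenewguy}. The $W^{3,2}$ bounds from \eqr{e:w11}, combined with a standard absorption that will force the Hessian domain to enlarge from $|x|<R$ to $|x|<R+1$, give $\|\psi\|_{L^2}^2 + \sum_i \|\psi_i\|_{L^2}^2 \leq C_R \mu^2$.

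First I apply Lemma \ref{l:eigP} with $u = w$ and $\lambda = 1$. The eigenvalues of $-\Delta_\theta$ on $\SS^{n-k}_{\sqrt{2(n-k)}}$ are $\ell(\ell+n-k-1)/(2(n-k))$ for integers $\ell \geq 0$; since $n-k \geq 1$, one checks directly that none of these equals $1$, so $\Psi_{1,w,y}\equiv 0$. This reduces bounding $\int_{|x|<R} w^2$ to bounding $\int_{|x|<R} |\nabla^{\RR^k} w|^2$, modulo the target $\int |\Hx_w|^2$ and the $O(\mu^2)$ error from $\|\psi\|_{L^2}^2$. Applying Lemma \ref{l:eigP} in turn to each $w_i$ at $\lambda = 1/2$ reduces $\int w_i^2$ to the projection term $\int_{B_R^k} \Psi_{1/2, w_i, y}\, dy$ plus $O(\mu^2)$ errors; here $|\nabla^{\RR^k} w_i|^2 \leq |\Hx_w|^2$ pointwise, while the third-derivative term $\int |\Hx_{w_i}|^2$ is absorbed using the $W^{3,2}$ bound on $w$ in \eqr{e:w11}.

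The main technical step, which I expect to be the hard part, is controlling the projection term. Fix an $L^2(\SS)$-orthonormal basis $\{h_a\}$ of the nontrivial eigenspace $\{h : \Delta_\theta h = -h/2\}$ (the $\ell = 1$ spherical harmonics, which are precisely the ones appearing in the rotational kernel $x_k h_k(\theta)$ of $\cL+1$), and set $J_{i,a}(y) = \int_{x=y} h_a\, w_i$, so that $\Psi_{1/2, w_i, y} = \sum_a J_{i,a}(y)^2$. The last claim in Lemma \ref{l:avgs} --- itself a direct consequence of the normalization (A2) built into $w$ by Lemma \ref{l:taylorsphere} --- yields $\big|\int_{B_R^k} J_{i,a}\, dy\big| \leq C_R \mu$, while Cauchy--Schwarz in $\theta$ (using $\|h_a\|_{L^2(\SS)}=1$) gives $|\nabla_y J_{i,a}(y)|^2 \leq \int_{x=y} |\Hx_w|^2$. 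A Neumann--Poincar\'e inequality on $B_R^k$ then produces
\begin{equation*}
\int_{B_R^k} J_{i,a}(y)^2\, dy \;\leq\; C_R\, \mu^2 \;+\; C_R \int_{|x|<R} |\Hx_w|^2 \, .
\end{equation*}
Summing over $i, a$ and back-substituting through the two applications of Lemma \ref{l:eigP} gives the claim. The difficulty, and the reason for the careful preparatory decomposition of this section, is that a direct spherical Poincar\'e inequality fails on the $-1/2$-eigenspace of $\Delta_\theta$; it is the integral identity in Lemma \ref{l:avgs} that tames the otherwise uncontrolled rotational modes.
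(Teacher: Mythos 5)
Your architecture is the paper's: apply Lemma \ref{l:eigP} to $w$ at $\lambda=1$ (where the spherical eigenspace is empty), then to each $w_i$ at $\lambda=\frac{1}{2}$, and control the projection term $\Psi_{\frac{1}{2},w_i,x}=\sum_a J_{i,a}^2$ by combining Lemma \ref{l:avgs} (i.e.\ the normalization (A2)) with a Neumann--Poincar\'e inequality on $B_R^k$ and the Cauchy--Schwarz bound $|\nabla_y J_{i,a}|^2\leq\int_{x=y}|\Hx_w|^2$. That part is correct and is exactly the paper's argument.

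The gap is in the error accounting: the lemma needs every error to be $C_R\,\mu^2$, and two of your steps only deliver $C_R\,\mu$. First, \eqr{e:w11} gives $\|w\|_{W^{3,2}}^2\leq\mu$, so the third-derivative term $\int_{|x|<R}|\Hx_{w_i}|^2$ that you propose to ``absorb using the $W^{3,2}$ bound'' is only $O(\mu)$, not $O(\mu^2)$. The paper instead runs a reverse Poincar\'e inequality for the $w_{ij}$, using the pointwise bound $|\cL\, w_{ij}|\leq C_r\,\mu$ from \eqr{e:wijbounds}, to convert $\int_{|x|<R}|\nabla\Hx_w|^2$ into $C_R\,\mu^2+C_R\int_{|x|<R+1}|\Hx_w|^2$; this reverse Poincar\'e step, not the $\psi$-bound, is the actual source of the enlargement from $R$ to $R+1$. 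Second, your opening claim $\|\psi\|_{L^2}^2+\sum_i\|\psi_i\|_{L^2}^2\leq C_R\,\mu^2$ does not follow from \eqr{e:oldONE}, \eqr{e:thenewguy} and \eqr{e:w11}: the linear error $\epsilon(|w|+|\nabla w|)$ has squared $L^2$-norm of order $\epsilon^2\mu$, and $\epsilon$ is a fixed small constant rather than $o(1)$ in $\mu$, so again you only get $O(\mu)$. What is needed are the pointwise bounds $|(\cL+1)w|\leq C_r\,\mu$ and $|(\cL+\frac{1}{2})w_i|\leq C_r\,\mu$, which come from \eqr{e:new3p1}, \eqr{e:wijbounds} and the quadratic structure of the nonlinearity in (2) of Proposition \ref{p:evolving}, not from the linearized error estimates alone. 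The distinction between $\mu$ and $\mu^2$ is not cosmetic: in the proof of Proposition \ref{l:getbeta} the $C_R\,\mu^2$ term is absorbed via \eqr{e:lambda}, $\zeta\mu^2\leq\int_{|x|<4n}|\Hx_w|^2$, and a $C_R\,\mu$ error could not be absorbed that way.
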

 
 \begin{proof}
  Lemma \ref{l:eigP} with $u=w$ and $\lambda = 1$, so that  $\Psi_{\lambda , u,x}\equiv 0$ and $|(\cL + 1 )w| \leq C_r \, \mu$, gives 
   \begin{align}
  	\int_{|x| < R} w^2 & \leq  C  \,  R^2 \,  \int_{|x| < R} |\nabla^{\RR^k}  w |^2   + C \, \int_{|x| < R}  \left|  \Hx_w \right|^2 + C_R \, \mu^2 \, .
  \end{align}
 We need to absorb the first term on the right side.  Let $w_i$ be a Euclidean derivative of $w$.  Applying  Lemma \ref{l:eigP} with $u=w_i$ and $\lambda = \frac{1}{2}$ gives
   \begin{align}
  	\int_{|x| < R} w_i^2 & \leq   \int_{B_R^k} \Psi_{\frac{1}{2} , w_i,x}  + C  \,  R^2 \,  \int_{|x| < R} |\Hx_{w} |^2   + C \, \int_{|x| < R}  \left|  \Hx_{w_i} \right|^2 + C_R \, \mu^2 \, .
  \end{align}
  Let $\{ h_j \}$ be an $L^2$-orthonormal basis of $\frac{1}{2}$-eigenfunctions for $\Delta_{\theta}$ and define  $\Psi_{ij} (x) =
 \int h_j (\theta) w_i (x,\theta) \, d\theta$.  It follows that 
\begin{align}
	\Psi_{\frac{1}{2} , w_i,x} =\sum_j \left( \Psi_{ij} (x)  \right)^2 \, .
\end{align}
By Lemma \ref{l:avgs}, 
$
 	\left( \int_{|x| < r} \Psi_{ij} (x) \right)^2 \leq C_r \, \mu^2$.
	 The  Poincar\'e inequality on $B_R^k$ gives 
 \begin{align}
 	\int_{B_R^k}  \left( \Psi_{ij} (x)  \right)^2 \leq C_r \, \mu^2  + C_R \, \int_{B_R^k} \left| \nabla^{\RR^k} \Psi_{ij} \right|^2
	\leq C_r \, \mu^2  + C_R  \, \int_{|x| < R} \left| \Hx_w \right|^2 \, .
 \end{align}
 Putting this together gives 
  \begin{align}
 	\int_{|x| < R} w^2 & \leq  C_R \,  \int_{|x| < R} |\nabla^{\RR^k}  \Hx_w |^2   + C_R \, \int_{|x| < R}  \left|  \Hx_w \right|^2 + C_R \, \mu^2 \, .
 \end{align}
Finally, to complete the proof, we use $|\cL \Hx_w| \leq C_r \, \mu$ and the reverse Poincar\'e inequality  to bound the $|\nabla \Hx_w|$ term.
 \end{proof}

 \begin{proof}[Proof of Proposition \ref{l:getbeta}]
 We will fix $\lambda$ at the end depending just on $n$ and $r_+$ and then choose $\zeta_2$.
 Given $r> 4n$, Lemma \ref{l:PIq} and  \eqr{e:lambda}
give 
 \begin{align}	\label{e:startE1}
 	\int_{|x| < r} w^2 & \leq    C_r' \, \int_{|x| < r+1}  \left|  \Hx_w \right|^2 + C_r \, \mu^2 \leq    C_r \, \int_{4n < |x| < r+1}  \left|  \Hx_w \right|^2 \, .
 \end{align}
Let $r_n$ be given by Lemma \ref{l:Uc}, so that $U_2 (r) \geq \frac{r^2}{3} $ for $r \geq r_n$.  If $r_n \leq r$ and $r+1 \leq s \leq \lambda -2$, then  \eqr{e:startE1} and Lemma \ref{l:Uc} give that
\begin{align}	 	 
	  \int_{|x| < { {r}}} w^{2}   &   \leq  
	    C_{  {r}} \, I_2 (r+1)   \leq 	  C_r \, 
	     I_2 (s) \, \e^{ - \frac{s^2}{3} }  	  \, .  \label{e:U2grows}
\end{align}
On the other hand, Lemma \ref{l:RPo}
gives that
  \begin{align}	 	 	\label{e:RPo}
	\int_{0}^s t^{k-1} \, I_2(t) \, \e^{ - \frac{t^2}{4} } \, dt = \int_{|x| < s} \left| \Hx_w \right|^2 \, \e^{-f} \leq   204 \, \int_{|x| < s +2} w^2 \, \e^{-f}
	\, .
\end{align}
 It follows that we can choose $r_a$, depending just on $n$, so that 
 $
 	2 \, \int_{|x| < 9n} w^2 \, \e^{-f} \leq \int_{|x| < r_a} w^2 \, \e^{-f} $.
	 In particular, the first part of \eqr{e:holefilledA} holds for any $r_1 \geq r_a$.  
 
Let $I_0$ and $U_0$ be the quantities $I$ and $U$ for $w$.  We repeat the argument   starting from $r=\max \{ r_a , r_+ \}$   using $U_2 \geq \frac{r^2}{3}$ to force $I_0$ to grow. For $\lambda$ large,  depending on $n$ and $r_+$, this gives $r_1 \in (\max \{ r_a , r_+ \} , \lambda)$ with $U_0 (r_1) \geq \frac{r^2}{16}$ (we could do this for any rate below $\frac{r^2}{3}$).  Finally,  choose $\zeta_2 > 84$ larger than the $\zeta_1$ from Lemma \ref{l:Uc} with this $\lambda$.
   \end{proof}

  \section{Proving the estimate for rescaled MCF}		\label{s:s4}
 
   We will now prove      \eqr{e:rmcfa}  and, thus, complete the proof of Theorem \ref{t:thom}.
From now on,  $\Sigma_t \subset \RR^{n+1}$ is a rescaled MCF with  $\lambda (\Sigma_t)   < \infty$
and  $\Sigma_t$ converges as $t \to \infty$ to a cylinder $\cC = \SS^{n-k}_{\sqrt{2(n-k)}} \times \RR^k$.  The sequence $\delta_j$ is defined in \eqr{e:deltaj}.

 \begin{Pro}	\label{t:rMCF}
 \eqr{e:rmcfa} holds.
 \end{Pro}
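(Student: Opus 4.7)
The plan is to decompose the graph function $w_j$ over $\cC_{j+1}$ at each scale $j$ as $w_j = \tilde w_j + v_j$ using Theorem \ref{t:goalCM7}, with $\tilde w_j$ in the kernel of $\cL+1$ and $v_j$ small, and then show that $\tilde w_j$ contributes nothing to $\Pi_{j+1}(\nabla H)$ at the linearized level while the remainder $v_j$ and the quadratic-in-$w_j$ terms both give summable contributions. Fix $\beta < 1$ and $\nu < 1$ both close to $1$, so that $\nu\beta$ exceeds the $\bar\beta$ from Proposition \ref{c:djsums1}. The hypotheses \eqr{e:oldONE}--\eqr{e:wijbounds} of Theorem \ref{t:goalCM7} are furnished by Proposition \ref{p:evolving}(1), (2), (4) with $\mu_j \leq C\delta_j^{\beta}$, and Theorem \ref{t:goalCM7} then yields $\sup_{|x|\leq 3n}|v_j| \leq C\delta_j^{\nu\beta}$. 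Interpolating with the higher derivative bounds of Proposition \ref{p:evolving}(4) gives bounds of the same order for $|\nabla v_j|$, $|\Hess_{v_j}|$ and $|\nabla \Hess_{v_j}|$ in $B_{2n}$.

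Next, linearize $\nabla^{\Sigma_t}H$ around the cylinder $\cC_{j+1}$. Since $H$ is constant on $\cC_{j+1}$, a direct calculation using $H = \phi + \tfrac12\langle x,\nn\rangle$ and Proposition \ref{p:evolving}(2) gives
\[
\nabla^{\Sigma_t}H \;=\; \nabla^{\cC_{j+1}}L_H[w_j] \,+\, Q\bigl(w_j,\nabla w_j, \Hess_{w_j},\nabla \Hess_{w_j}\bigr),
\]
where
\[
L_H[w] \;=\; (\cL+1)w \;+\; \tfrac12\bigl(w - \langle z, \nabla^{\RR^k}w\rangle\bigr)
\]
is the linearization of $H$ (the first summand from the linearization of $\phi$, the second from the linearization of $\tfrac12\langle x,\nn\rangle$ under the graph parametrization), and $Q$ is quadratic in its arguments with uniform coefficients on $B_{2n}$.

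The heart of the argument is to verify that $\Pi_{j+1}\bigl(\nabla^{\cC_{j+1}}L_H[\tilde w]\bigr) = 0$ for every generator of the kernel \eqr{e:kernelcC}. For $\tilde w = a_i(x_i^2-2)$ or $a_{ij}x_ix_j$ with $i<j$, one computes $\langle z,\nabla^{\RR^k}\tilde w\rangle = 2\tilde w$, so $L_H[\tilde w]$ equals $-\tfrac12(x_i^2+2)$ or $-\tfrac12 x_ix_j$, each depending only on the axis coordinate $z \in \RR^k$; its intrinsic gradient on $\cC_{j+1}$ thus lies in the axis direction and is annihilated by $\Pi_{j+1}$. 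For $\tilde w = x_kh_k(\theta)$ with $\Delta_\theta h_k = -\tfrac12 h_k$, one has $\langle z,\nabla^{\RR^k}\tilde w\rangle = x_k h_k = \tilde w$, so $L_H[\tilde w] = 0$ identically, consistent with the interpretation that $\tilde w$ generates an ambient rotation of the cylinder axis and therefore preserves $H$ being constant. By linearity, $\Pi_{j+1}\bigl(\nabla^{\cC_{j+1}}L_H[\tilde w_j]\bigr) = 0$, so $\Pi_{j+1}(\nabla^{\Sigma_t}H) = \Pi_{j+1}\bigl(\nabla^{\cC_{j+1}}L_H[v_j]\bigr) + \Pi_{j+1}(Q)$.

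Finally, $|\nabla^{\cC_{j+1}}L_H[v_j]|$ is bounded in $B_{2n}$ by $C(|v_j|+|\nabla v_j|+|\Hess_{v_j}|+|\nabla \Hess_{v_j}|) \leq C\delta_j^{\nu\beta}$, while $|Q| \leq C(w_j^2 + |\nabla w_j|^2 + |\Hess_{w_j}|^2 + \ldots) \leq C\delta_j^{\beta}$ by Proposition \ref{p:evolving}(1). Since both $\sum_j\delta_j^{\nu\beta}$ and $\sum_j\delta_j^{\beta}$ are finite by Proposition \ref{c:djsums1}, we get
\[
\sum_{j=1}^\infty\int_j^{j+1}\sup_{B_{2n}\cap\Sigma_t}|\Pi_{j+1}(\nabla H)|\,dt \;<\; \infty,
\]
which is \eqr{e:rmcfa}. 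The main obstacle is the third step, namely the explicit identification of $L_H[\tilde w]$ for each kernel generator and the verification that the resulting intrinsic gradient on $\cC_{j+1}$ is axis-directed or identically zero; this is precisely the careful analysis of the kernel of $\cL+1$ previewed in the introduction. A subsidiary technicality is uniformity as the approximating cylinder $\cC_{j+1}$ varies with $j$, controlled by Proposition \ref{p:evolving}(3).
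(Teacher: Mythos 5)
Your proposal is correct and follows essentially the same route as the paper: use Proposition \ref{p:evolving} to verify the hypotheses of Theorem \ref{t:goalCM7} with $\mu\le C\delta_j^{\beta}$, split $w=\tilde w+v$, observe that the kernel elements \eqr{e:kernelcC} contribute nothing to $\Pi_{j+1}(\nabla H)$ at the linearized level (the paper phrases this as $\nabla_\theta(\Delta_\theta+\Delta_x+\tfrac12)\tilde w=0$ after first reducing to the $\theta$-gradient on $\cC_{j+1}$), and sum the $\delta_j^{\nu\beta}$ and $\delta_j^{\beta}$ contributions via Proposition \ref{c:djsums1}. The only blemishes are bookkeeping slips in your explicit $L_H$ (the correct linearization is $\Delta_\theta+\Delta_x+\tfrac12$, i.e.\ the sign on the $\tfrac12(w-\langle z,\nabla^{\RR^k}w\rangle)$ correction should be reversed, and $\langle z,\nabla^{\RR^k}(x_i^2-2)\rangle=2x_i^2$, not $2(x_i^2-2)$), which do not affect the conclusion that the quadratic generators produce axis-directed gradients.
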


        \begin{proof}
         We will assume that $k \geq 1$ as 
  the case $k=0$ follows similarly, but much more easily.
     Let $\bar{\beta} < 1 $ be given by Proposition \ref{c:djsums1}. The proposition will follow once we show  
   \begin{align}	 
		\sup_{t \in [j,j+1]} \, \, \sup_{B_{2n}\cap \Sigma_t} \, \left| \Pi_{j+1} (\nabla \bar{H}) \right| &\leq C \, \delta_j^{ \bar{\beta}}  \, , \label{e:show1}  
		\end{align}
	where $C$ does not depend on $j$.  
	
	We  next explain how the parameters will be chosen.  First, since $\bar{\beta} < 1$, we can choose $\nu , \beta < 1$ so that $\bar{\beta} < \nu \, \beta$.  Next, given this $\nu$,  Theorem \ref{t:goalCM7} gives   $\bar{\epsilon} > 0$.  Finally, we choose  
	  the constant $\epsilon_1 > 0$  in Proposition \ref{p:evolving} to 
	ensure that  \eqr{e:oldONE}  holds with $\bar{\epsilon}$.

 Proposition \ref{p:evolving} with $\epsilon_1$  and   $\beta  \in (\bar{\beta} , 1)$ as above  gives
                      constants $R_j , C$ and 
                   cylinders $\cC_j$ so that $B_{R_j} \cap \Sigma_t$ is a graph over $\cC_{j+1}$ of a function $w$ for each $t \in [j,j+1]$.  Moreover, (1), (2) and (4) in  Proposition \ref{p:evolving} 
                       give \eqr{e:oldONE}--\eqr{e:wijbounds} with $\epsilon < \bar{\epsilon}$ and 
                     $
                       	\mu = C \, \delta_j^{ \beta }$.
	                        	 		  Theorem \ref{t:goalCM7} now applies with our choice of $\nu < 1$ above.  Thus,  we get a constant $C_{\nu}$ and function
\begin{align}	 
	\tilde{w} = \sum_i  a_i (x_i^2 - 2) + \sum_{i<k} a_{ik} \, x_i x_k + \sum_k   x_k h_k (\theta) \, , 
\end{align}
where $a_i , a_{ik}  $ are constants and each $h_k(\theta)$ is a   $\frac{1}{2}$-eigenfunction for $\Delta_{\theta}$, 
and we have
\begin{align}	 \label{e:goodap2}
	\sup_{|x| \leq 3n} \, \left| w - \tilde{w} \right| \leq C_{\nu} \,\mu^{ \nu } = C \, \delta_j^{\beta \, \nu}   \, .
\end{align}
The $\RR^k$ unit vector fields $\partial_{x_i}$ on $\cC_{j+1}$ push forward to vector fields on    $\Sigma_t$   that we   still denote  $\partial_{x_i}$.  	Since $\{ |x| \leq 3n\}  \cap \Sigma_t$ is the graph over $\cC_{j+1}$ of $w$ with $\| w\|_{C^2( |x| \leq 3n)}^2 \leq C \, \delta_j^{\beta}$, it follows that  $|\nabla \bar{H}| \leq C \, \delta_j^{ \frac{\beta}{2}}$
and
$|\Pi_{j+1} (\partial_{x_i})| \leq C \, \delta_j^{ \frac{\beta}{2}}$ on $|x| \leq 3n$. Therefore,   \eqr{e:show1} follows from  
	\begin{align}	\label{e:show2a}
		\sup_{ |x| \leq {3n}} |\nabla_{\theta} \bar{H} | &\leq C \, \delta_j^{ \bar{\beta}} \, , 
	\end{align}
	where $\bar{H}$ is now regarded as a function on $\cC_{j+1}$ itself.
	It remains to establish   \eqr{e:show2a}.

	    The mean curvature $\bar{H}$ of the graph of $w$ is given at each point explicitly as a function of $w$, $\nabla w$ and $\Hess_w$; see
          corollary $A.30$ in \cite{CM2}.  We can write this as the first order part (in $w, \nabla w , \Hess_w$) plus a quadratic remainder
          \begin{align}	\label{e:expandH}
          	\bar{H}   &= H_{\cC} + \left( \Delta_{\theta} + \Delta_x +   \frac{1}{2} \right) w + O(w^2)     \, .
          \end{align}
           Here  $O(w^2)$ is a term that depends at least quadratically on $w, \nabla w , \Hess_w$ and the constant $H_{\cC} = \frac{ \sqrt{n-k} }{\sqrt{2}}$ is the mean curvature of $\cC$.  We will show that  the $\theta$ derivative of each of the terms in \eqr{e:expandH} is bounded by $C \, \delta_j^{ \bar{\beta}}$.   This is obvious for the constant term.  It is also obvious for the quadratic term $O(w^2)$ using interpolation and 
             $\| w\|_{C^2( |x| \leq {3n})}^2 \leq C \, \delta_j^{\beta}$.          Similarly, since $\beta \, \nu > \bar{\beta}$, the estimate \eqr{e:goodap2} and interpolation give that
          \begin{align}
          	\left|\nabla  \,  \left( \Delta_{\theta} + \Delta_x +   \frac{1}{2} \right) (w-\tilde{w}) \right| \leq C \, \delta_j^{ \bar{\beta}}  \, .
          \end{align}
          The proposition follows from the above since
 applying the linearized operator to $\tilde{w}$ gives
 \begin{align}
          	 \nabla_{\theta} \, \left( \Delta_{\theta} + \Delta_x +   \frac{1}{2} \right) \tilde{w} &= \nabla_{\theta} \sum_k \left( \Delta_{\theta} + \Delta_x + \frac{1}{2} \right)  x_k \,  h_k = 0 \, .
	          \end{align}        
       \end{proof}

\end{document}